\documentclass[11pt, reqno]{amsart}
\usepackage{amsmath} 
\usepackage{amsthm}
\usepackage{graphicx}
\usepackage{ amssymb }
\usepackage{amsfonts}
\usepackage{cases}
\usepackage[margin=1in]{geometry}
\usepackage{bm}
\usepackage{calc}
\usepackage{hyperref}
\usepackage{cleveref}
\usepackage{lipsum}
\usepackage{color} 
\usepackage{cancel}
\usepackage{pgfplots}
\pgfplotsset{compat=1.15}
\usepackage{mathrsfs}
\usepackage{tikz}
\usetikzlibrary{arrows}
\usepackage{url}

\let\OLDthebibliography\thebibliography
\renewcommand\thebibliography[1]{
  \OLDthebibliography{#1}
  \setlength{\parskip}{0pt}
  \setlength{\itemsep}{0pt plus 0.3ex}
}

\hypersetup{
    colorlinks=true, 
    linktoc=all,     
    linkcolor=blue,  
    citecolor = red
}

\newtheorem{Proposition}{Proposition}[section]
\newtheorem{Lemma}[Proposition]{Lemma}

\newtheorem{Theorem}[Proposition]{Theorem}

\newtheorem{Corollary}[Proposition]{Corollary}

\newtheorem{Remark}[Proposition]{Remark}

\newtheorem{Definition}[Proposition]{Definition}

\newtheorem{MainTheorem}{Theorem}

\newcommand{\Z}{\ensuremath{\mathbb{Z}}}
\newcommand{\N}{\ensuremath{\mathbb{N}}}
\newcommand{\R}{\ensuremath{\mathbb{R}}}
\DeclareMathOperator*{\esssup}{ess\,sup}

\newcommand{\LP}{\ensuremath{\mathbb{P}}} 
\newcommand\norm[2]{\left\Vert#1\right\Vert_{#2}} 
\newcommand\Bes[3]{\dot{B}^{#1}_{{#2},{#3}}}

\definecolor{qqqqff}{rgb}{0,0,1}
\definecolor{ffqqqq}{rgb}{1,0,0}
\definecolor{qqwuqq}{rgb}{0,0.59215686274509803,0}
\definecolor{ffzztt}{rgb}{1,0.6,0.2}
\definecolor{uuuuuu}{rgb}{0.26666666666666666,0.26666666666666666,0.26666666666666666}
\definecolor{ffdxqq}{rgb}{1,0.8431372549019608,0}
\definecolor{xfqqff}{rgb}{0.4980392156862745,0,1}
\definecolor{qqzzff}{rgb}{0,0.6,1}

\title[On rough Calderón solutions and applications to the singular set]{On rough Calderón solutions to the Navier-Stokes equations and applications to the singular set}
\author{Henry Popkin}
\thanks{HP is supported by Raoul \& Catherine Hughes (Alumni funds) and the University Research Studentship award EH-MA1333.}
\date{\today}

\begin{document}

\maketitle
\begin{center}
University of Bath\\
Department of Mathematical Sciences\\
BA2 7AY\\
hp775@bath.ac.uk
\end{center}
\begin{abstract}
In 1934, Leray \cite{Leray} proved the existence of global-in-time weak solutions to the Navier-Stokes equations for any divergence-free initial data in $L^2$. In the 1980s, Giga and Kato \cite{Giga, Kato84} independently showed that there exist global-in-time mild solutions corresponding to small enough critical $L^3(\R^3)$ initial data. In 1990, Calderón \cite{Calderón} filled the gap to show that there exist global-in-time weak solutions for all supercritical initial data in $L^p$ for $2< p<3$ by utilising a splitting argument, blending the constructions of Leray and Giga-Kato.
\par In this paper, we utilise a ``Calderón-like" splitting to show the global-in-time existence of weak solutions to the Navier-Stokes equations corresponding to supercritical Besov space initial data $u_0 \in \Bes{s}{q}{\infty}$ where $q>2$ and $-1+\frac{2}{q}<s<\min \left(-1+\frac{3}{q},0 \right)$, which fills a similar gap between Leray and known mild solution theory in the Besov space setting. We also use the Calderón-like splitting to investigate the structure of the singular set under a Type-I blow-up assumption in the Besov space setting, which is considerably rougher than in previous works. 
\end{abstract}
\pagenumbering{roman}
\setcounter{tocdepth}{1}
\tableofcontents

\pagenumbering{arabic}
\newpage
\section{Introduction}

In this paper, we consider the three-dimensional incompressible Navier-Stokes equations on $\R^3\times (0,T)$ for some $0<T\leq \infty$: 
\begin{equation}\label{Navier-Stokes equations}
    \partial_t u -\Delta u +(u \cdot \nabla)u + \nabla \pi =0, \qquad\qquad  \text{div}(u) = 0. \tag{NSE}
\end{equation}
These equations enjoy the following rescaling property: if $(u, \pi)$ is a solution to the Navier Stokes equations with initial data $u_0$, then the rescaled functions \begin{align} \label{NSrescale} u_\lambda(x,t) := \lambda u(\lambda x,\lambda^2t) \qquad \text{and} \qquad  \pi_\lambda(x,t):= \lambda^2\pi(\lambda x,\lambda^2t)
\end{align}
solve the Navier-Stokes equations with initial data $u_{0,\lambda}(x) := \lambda u_0(\lambda x) $. It is known that the criticality\footnote{See Subsection \ref{criticaldefn} for the definition of the criticality of a scalar quantity.} of certain quantities is closely related to the existence and regularity theory for (\ref{Navier-Stokes equations}). For example, given divergence-free\footnote{Initial data will always be divergence-free (in the sense of distributions).} initial data in the subcritical spaces $H^s(\R^3)$ for $s>\frac{d}{2}-1$, $L^p(\R^3)$ for $p>3$ or $\Bes{s}{p}{\infty}(\R^3)$ for $p> 3$ and $-1+\frac{3}{p}< s<0$, there exist mild solutions, which are smooth, but are only known to exist for a short time - see for example, \cite{Kato64, FJR72}. 

On the other hand, in Leray's 1934 seminal paper \cite{Leray}, it is shown that if we are given initial data $u_0\in L^2(\R^3)$, there exists a \textit{weak Leray-Hopf solution}\footnote{This solution satisfies \ref{Navier-Stokes equations} in the sense of distributions. It also satisfies the energy inequality. $L^2(\R^3)$ is a natural class to study as it may physically be viewed the space of initial data with finite kinetic energy. This is a supercritical space.} to (\ref{Navier-Stokes equations}) which is actually global-in-time. However, this is at the potential cost of uniqueness or regularity; for example, the recent paper \cite{nonuniqueness} demonstrates the non-uniqueness of Leray-Hopf weak solutions to the \textit{forced} (\ref{Navier-Stokes equations}) for a specific forcing in $L^1_tL^2_x(\R^3_+ \times (0,T))$. Later, Giga and Kato \cite{Giga, Kato84} independently showed the existence of global-in-time mild solutions corresponding to sufficiently small initial data in the critical space $L^3(\R^3)$, and Calderón \cite{Calderón} then showed that there exist global-in-time weak solutions corresponding to any initial data in $L^p(\R^3)$ for $p\in (2,3)$, filling the supercritical gap between the previous two results. We also mention that Leray's result has been generalised to include all initial data in $L^2_{uloc}(\R^3)$ with certain decay properties \cite{l2ulocsolns, Kwonslowdecay}. One may ask: \begin{enumerate}
    \item[\textbf{(Q1)}] What other classes of initial data give rise to global-in-time weak solutions to (\ref{Navier-Stokes equations})?
\end{enumerate} 
In particular, with access to the mild theory for subcritical Besov space initial data and taking inspiration from Calderón, we fill the supercritical gap in the Besov space setting in Theorem \ref{Global solution supercritical Besov data}.

Let us now turn our attention back to Leray-Hopf weak solutions. Given sufficiently smooth initial data, it is well known that a Leray-Hopf weak solution is smooth and unique for a short time; if such a solution first loses smoothness at time $T>0$, it is known that the solution necessarily forms a singularity\footnote{See Subsection \ref{singularsection} for a precise defintion.}. Hence, it is natural to ask whether a weak Leray-Hopf solution is smooth for \textit{all time}, or whether singularities may form in finite-time. We mention that there has been recent numerical evidence that singularities may occur for the axisymmetric Navier-Stokes equations \cite{computerblowup}. Leray conjectured the existence of self-similar blow-up solutions, which were ruled out in \cite{selfsimNeustupa} and \cite{selfsimTsai}. This motivated other natural scale-invariant blow-up ansatz to be studied, such as backwards discretely self-similar solutions (DSS) (see, for example, \cite{Chae2016RemovingDS}), or more generally Type-I solutions\footnote{Loosely, 
Type-I solutions are solutions that possess a critical bound in a neighbourhood of a singularity. For a formal definition see \cite{AlbBarkerLouiville}.}. To gain insight into potential Type-I singularity formation, we study the structure of the singular set for Type-I solutions. We investigate the following question:
\begin{enumerate}
    \item[\textbf{(Q2)}] What is the structure of the singular set of a solution to (\ref{Navier-Stokes equations}), supposing that it first loses smoothness at a time $T>0$ whilst satisfying some scale-invariant bound?
\end{enumerate}
\pagebreak
\subsection{Statement of results}
Our first main Theorem addresses \textbf{(Q1)}. Just as Calderón \cite{Calderón} filled the supercritical gap to find solutions corresponding to initial data in $L^p$ for $2<p<3$, between Leray ($L^2(\R^3)$) and Kato's mild solution theory (small enough in $L^3(\R^3)$), we fill a similar gap in the Besov space setting.

\begin{MainTheorem}\label{Global solution supercritical Besov data}
    Let $2<q<\infty$ and $-1+\frac{2}{q}<s<\min \left(-1+\frac{3}{q},0 \right).$ Suppose that $u_0 \in \Bes{s}{q}{\infty}(\R^3)$ is divergence-free. Then there exist constants $\max(q,3)< p<\infty$ and $0<\delta<1-\frac{3}{p}$ (both depending on $q$ and $s$) and a suitable\footnote{As in Definition \ref{suitablesolngen}.} weak solution $(u,\pi)$ to the Navier-Stokes equations on $\R^3\times(0,\infty)$ which may be decomposed as $$u=m+w \in L_{loc}^\infty(0,\infty;L^\infty(\R^3)\cap L^p(\R^3)) + L^\infty(0,\infty;L^2(\R^3))\cap L^2(0,\infty;\dot{H}^1(\R^3)),$$ corresponding to an initial data splitting $$u_0  = m_0 + w_0 \in \Bes{-1+\frac{3}{p}+\delta}{p}{p}\cap \Bes{s}{q}{\infty} + L^2\cap \Bes{s}{q}{\infty} $$ such that:
    \begin{enumerate}
        \item[(a)] $m$ is a mild solution of (\ref{Navier-Stokes equations}) on $\R^3\times(0,\infty)$ and satisfies
    \begin{equation}
     \esssup_{0<t<\infty} t^{\frac{1}{2}}\norm{m(\cdot,t)}{L^\infty}<\infty,\qquad \esssup_{0<t<\infty} t^{\frac{1}{2}\left(1-\frac{3}{p} \right)}\norm{m(\cdot,t)}{L^p}<\infty.  \nonumber
    \end{equation}
    and \begin{equation}
        \esssup_{0<t<\infty} t^{\frac{1}{2}\left(1-\frac{3}{p}-\delta \right)}\norm{m(\cdot,t)}{L^p}<\infty.  \nonumber
    \end{equation}

    \item[(b)]w satisfies: 
    \begin{enumerate}
        \item[(i)] $\lim\limits_{t\to0^+}\norm{w(\cdot,t)-w_0}{L^2}=0$.
\item[(ii)] for all $\phi \in C^\infty_c(\R^3),$ the map $t \mapsto \int_{\R^3}w(x,t)\cdot\phi(x)\;dx$ is continuous on $[0,\infty).$ 
\item[(iii)]w satisfies the (perturbed) energy inequality: for almost every $t_1 \in \left[0,\infty\right)$ (and including $t_1=0$) and for all $t_2 \in \left[ t_1, \infty\right)$, 
    \begin{equation}
        \norm{w(\cdot,t_2)}{L^2_x}^2 +2\int_{t_1}^{t_2} \norm{\nabla w(\cdot,s)}{L^2_x}^2ds \leq \norm{w(\cdot,t_1)}{L^2_x}^2 + 2\int_{t_1}^{t_2} \int_{\R^3} m\cdot(w\cdot\nabla)w\;dxds. \nonumber
    \end{equation}
\item[(iv)] $w$ is suitable in the sense of Definition \ref{suitablesolngen} for the $m$-perturbed Navier-Stokes equations on $\R^3\times (0,\infty).$
    \end{enumerate}      
    \end{enumerate}
\end{MainTheorem}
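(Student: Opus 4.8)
The plan is to follow Calderón's splitting strategy, now adapted to the Besov setting. First I would split the initial data $u_0 \in \Bes{s}{q}{\infty}$ into a ``smooth/small'' part $m_0$ lying in a subcritical Besov space $\Bes{-1+\frac{3}{p}+\delta}{p}{p}$ and a ``rough'' part $w_0 \in L^2$. The natural way to do this is a Littlewood-Paley frequency cut-off: set $m_0 = \sum_{j < N} \Delta_j u_0$ (the low frequencies) and $w_0 = \sum_{j \geq N} \Delta_j u_0$ (the high frequencies), or a variant thereof. The key interpolation-type estimates are to check that, for the given range $2<q<\infty$ and $-1+\frac{2}{q} < s < \min(-1+\frac{3}{q},0)$, the low-frequency part genuinely lands in a \emph{subcritical} Besov space (so that mild solution theory applies globally in time once we also ensure smallness) while the high-frequency part lands in $L^2$ (so that Leray's theory applies). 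The choice of $p$ and $\delta$ must be calibrated to make both embeddings hold simultaneously; the constraint $s > -1 + \frac{2}{q}$ should be exactly what forces $w_0 \in L^2$, since $\Bes{s}{q}{\infty} \cap \{\text{high freq}\}$ embeds into $L^2$ precisely when $s > -1 + \frac{2}{q}$ after summing the geometric frequency tail, and the constraint $s < -1 + \frac{3}{q}$ should control the subcritical Besov membership of $m_0$.

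Next I would construct the mild solution $m$ from $m_0$. Because $m_0$ lives in a subcritical Besov space, the standard fixed-point/Picard iteration in the Kato-type solution space (with the weighted $t^{1/2}$-norms appearing in part (a)) yields a local mild solution. To promote this to a \emph{global} mild solution one typically needs smallness; here smallness should come from concentrating $m_0$ in sufficiently high frequencies (choosing the cut-off level $N$ large pushes the subcritical norm down, or alternatively the subcritical norm is automatically small by the frequency localization), so that the global existence theorem for small subcritical Besov data applies. The decay estimates in (a) — namely the $t^{1/2}\norm{m}{L^\infty}$, $t^{\frac12(1-3/p)}\norm{m}{L^p}$ and $t^{\frac12(1-3/p-\delta)}\norm{m}{L^p}$ bounds — are then read off from the fixed-point space itself and from the smoothing of the heat semigroup $e^{t\Delta}$ acting on subcritical data; these are the routine computations I would not grind through, but they pin down the admissible exponents.

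The heart of the argument is constructing $w$ as a weak solution of the \emph{$m$-perturbed} Navier-Stokes equations. Writing $u = m + w$ and subtracting the equation for $m$, the perturbation $w$ solves a Navier-Stokes-type system with additional linear and bilinear coupling terms involving the known drift $m$, schematically $\partial_t w - \Delta w + (w\cdot\nabla)w + (w\cdot\nabla)m + (m\cdot\nabla)w + \nabla p = 0$. I would run a Leray-type Galerkin (or retarded-mollification) approximation scheme for this perturbed system, deriving uniform energy estimates for $w$ in $L^\infty_t L^2_x \cap L^2_t \dot H^1_x$. The crucial \emph{a priori} bound comes from testing with $w$: the problematic cross term is $\int m\cdot(w\cdot\nabla)w$, which I would control using the decay estimates on $m$ from part (a) together with Hölder and interpolation (Gagliardo-Nirenberg) inequalities, absorbing the gradient into the good $\norm{\nabla w}{L^2}^2$ dissipation term via Young's inequality. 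This is exactly where the precise $L^p$ and $L^\infty$ time-weights on $m$ are needed — they must be strong enough that the drift terms are subcritically controllable and integrable in time near $t=0$, which is what the conditions $p > \max(q,3)$ and $0 < \delta < 1 - \frac{3}{p}$ guarantee.

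The main obstacle I expect is precisely this energy estimate and its compatibility with the low-regularity drift $m$: unlike Calderón's $L^p$ setting, here $m$ only enjoys Besov-type decay, so controlling $\int_{t_1}^{t_2}\int m\cdot(w\cdot\nabla)w$ uniformly in the approximation — and in particular obtaining the time-integrability near the initial time so that the energy inequality (b)(iii) holds down to $t_1 = 0$ with $\lim_{t\to 0^+}\norm{w(\cdot,t)-w_0}{L^2}=0$ — requires carefully balancing the singular weights $t^{-1/2}$ against the regularity of $w$. Once the uniform bounds are in hand, I would pass to the limit using Aubin-Lions compactness to extract a weak solution $w$, recover the pressure $p$ via the divergence-free projection and elliptic regularity, verify the weak continuity (b)(ii), and finally establish suitability (b)(iv) by proving a local energy inequality for the approximations and passing to the limit (lower semicontinuity of the dissipation, strong $L^2$ convergence for the nonlinear and drift terms). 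Setting $\pi$ to be the sum of the two pressures and $u = m+w$ then yields the claimed suitable weak solution on $\R^3 \times (0,\infty)$.
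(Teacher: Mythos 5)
Your high-level architecture (split the data, construct a mild solution for the nice piece, run a Leray-type argument for the perturbed system) matches the paper's, but two of your key steps fail as described. First, the splitting: a pure Littlewood--Paley frequency cut-off cannot produce the $L^2$ piece. For $q>2$, Bernstein's inequality only \emph{increases} integrability, so $\norm{\dot{\Delta}_j u_0}{L^2}$ is not controlled by $\norm{\dot{\Delta}_j u_0}{L^q}$ on $\R^3$, and ``summing the geometric frequency tail'' gives nothing; the condition $s>-1+\frac{2}{q}$ does not make the high frequencies of a $\Bes{s}{q}{\infty}$ function square-integrable. The paper instead uses the Albritton--Barker splitting (Proposition \ref{genBesovsplit} and Corollary \ref{usefulBesovsplit}), which --- like Calder\'on's original $L^p$ argument --- truncates in \emph{amplitude} as well as in frequency; the constraint $s>-1+\frac{2}{q}$ is the geometric condition that the line through $(0,\tfrac{1}{2})$ and $(s,\tfrac{1}{q})$ in the $(s,1/p)$-diagram dips below the critical line, so that the admissible region $D$ contains a subcritical point.

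Second, and more seriously, smallness of a \emph{subcritical} Besov norm does not yield a global-in-time mild solution: subcritical smallness only lengthens the guaranteed existence time (the condition is $T^{\delta/2}\norm{m_0}{\Bes{-1+\frac{3}{p}+\delta}{p}{\infty}}\leq C$, which gives a finite $T$ for any nonzero datum). There is no ``global existence theorem for small subcritical Besov data,'' so your Step 2 stalls at a local solution. This is precisely the tension the paper identifies: a purely critical piece gives a global $m$ but a logarithmically divergent $\int_0^t \norm{m(s)}{L^\infty}^2\,ds$ in the Gronwall step, while a purely subcritical piece gives integrability at $t=0$ but only a local $m$. The resolution --- the real content of the proof --- is to exploit the persistency property of the splitting ($m_0$ stays bounded in $\Bes{s}{q}{\infty}$, hence in a supercritical space $\Bes{-1+\frac{3}{p}-\delta^*}{p}{\infty}$ by embedding), interpolate this against the small subcritical bound to make the \emph{critical} norm $\norm{m_0}{\Bes{-1+\frac{3}{p}}{p}{\infty}}$ small (line (\ref{critpart})), apply the global critical mild theory, and then separately propagate the subcritical decay $m\in K^{p,\delta}(\infty)$ through the Picard iterates via Lemma \ref{contractionLemma}(ii). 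Without both halves of this argument your energy estimate for $w$ cannot close near $t=0$ on an infinite time horizon. Your remaining steps (Gronwall absorption of the cross term, compactness, suitability) are consistent with the paper, which in fact imports the perturbed-system existence theory off-the-shelf from Albritton's work rather than rerunning a Galerkin scheme.
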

Utilising the Calderón-inspired splitting technique which we used to prove Theorem A, we also address \textbf{(Q2)} and prove our second main Theorem, which assumes a sequential Type-I blow-up assumption in the Besov-space setting.
\begin{MainTheorem} \label{Type1 singular set}
    Let $3<q<\infty$, $-1+\frac{3}{q}\leq s<0$ and $M\geq1$. Let $u: \R^3\times(-1,\infty) \rightarrow \R^3$ be a suitable Leray-Hopf weak solution to the Navier-Stokes equations, where we assume:
\begin{enumerate}
\item[(a)] $u$ first loses smoothness at time $0$;
\item[(b)] there exists an increasing sequence $t_n \in (-1,0)$ with $t_n\uparrow 0$ such that  
 \begin{align} 
 \sup_{n\in\N}\left[ (-t_n)^{\frac{1}{2}\left(s+1-\frac{3}{q}\right)}\norm{u(\cdot,t_n)}{\Bes{s}{q}{\infty}(\R^3)} \right] \leq M. \label{BesovtimeslicetypeI}
 \end{align}
\end{enumerate}  
Then there exists a constant $C_{q,s}\geq1$ such that the number of elements $N$ of the singular set at time $0$ is bounded above by
\begin{equation}
   N \leq C_{q,s}M^{6q-9}.
\end{equation}
\end{MainTheorem}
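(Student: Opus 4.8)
The plan is to use the exact scale–invariance of the quantity in \eqref{BesovtimeslicetypeI} to normalise the configuration, and then to transplant the Calderón–type splitting of Theorem \ref{Global solution supercritical Besov data} to a backward neighbourhood of the singular time. Fix any slice $t_n$ and rescale by $\lambda:=(-t_n)^{1/2}$, setting $v(x,t):=\lambda\, u(\lambda x,\lambda^2 t)$. Then $v$ is again a suitable Leray–Hopf solution on a time interval containing $[-1,0]$, it first loses smoothness at $t=0$, its singular set at time $0$ is the image of that of $u$ under $x\mapsto\lambda^{-1}x$ and so has the same cardinality $N$, and the homogeneity of the Besov norm together with \eqref{BesovtimeslicetypeI} gives $\norm{v(\cdot,-1)}{\Bes{s}{q}{\infty}(\R^3)}=(-t_n)^{\frac12\left(s+1-\frac3q\right)}\norm{u(\cdot,t_n)}{\Bes{s}{q}{\infty}(\R^3)}\le M$. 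Thus it suffices to bound $N$ for a solution that is smooth on $[-1,0)$, blows up at $0$, and whose slice at $-1$ has $\Bes{s}{q}{\infty}$–norm at most $M$.

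Next I would run the splitting at the slice $t=-1$. Writing $v_0:=v(\cdot,-1)$, I Littlewood–Paley decompose $v_0=m_0+w_0$ at a frequency $2^{K}$ to be chosen as a function of $M$, taking $w_0$ to be the low–frequency part (which lies in $L^2$, since the relevant Bernstein exponent $\tfrac32-\tfrac3q-s$ is positive throughout the admissible range) and $m_0$ the high–frequency part. Exactly as in the proof of Theorem \ref{Global solution supercritical Besov data}, $m_0$ generates a mild solution $m$ obeying the decay estimates of part (a), while $w:=v-m$ is a suitable weak solution of the $m$–perturbed Navier–Stokes equations in the sense of Definition \ref{suitablesolngen}, with divergence–free $w_0\in L^2(\R^3)$ of size $\norm{w_0}{L^2}\lesssim 2^{K\left(\frac32-\frac3q-s\right)}M$ by Bernstein's inequality. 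The essential gain is that $m$ is smooth and bounded on $[-1,0]$, so the singular set of $v$ at time $0$ coincides with that of $w$; the problem is thereby reduced to counting the singular points of the energy–class, suitable perturbed solution $w$, to which $\varepsilon$–regularity theory applies even though the original rough Besov data of $v$ need not be amenable to it.

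To count the singular points of $w$ I would combine a Caffarelli–Kohn–Nirenberg–type $\varepsilon$–regularity criterion for the $m$–perturbed system with a packing argument and an a priori global budget extracted from the Type–I hypothesis. There is an absolute $\varepsilon_\ast>0$ such that every singular point $(x_i,0)$ carries at least $\varepsilon_\ast$ of a scale–invariant local quantity (for instance $\rho^{-1}\!\iint_{Q_\rho(x_i,0)}|\nabla w|^2$) at arbitrarily small scales. Selecting, by a Vitali argument, pairwise disjoint parabolic cylinders centred at the $N$ singular points and summing, the total is controlled by a global dissipation budget for $w$. Since $v$ is smooth on $[-1,0)$, parabolic smoothing upgrades the critical Besov control at the slice, transferred to $w$, into a Lebesgue–type Type–I bound on a sub–interval whose constant is an explicit power of $M$; this both forces $N$ to be finite and caps the packed total. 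Tracking the Bernstein exponent $\tfrac32-\tfrac3q-s$ against the scaling of the local enstrophy and the concentration lower bound, and optimising the free parameter $K$ in $M$, is designed to produce the stated polynomial bound $N\le C_{q,s}M^{6q-9}$.

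The main obstacle is the critical endpoint $s=-1+\tfrac3q$. There the frequency truncation cannot make any scale–invariant norm of $m$ small, so the interaction integral $\iint m\cdot(w\cdot\nabla)w$ in the perturbed energy inequality (Theorem \ref{Global solution supercritical Besov data}(b)(iii)) is not obviously absorbable, and a naïve Grönwall argument would yield only a bound exponential in $M$; worse, routing the count through energy absorption produces an exponent that \emph{degenerates} as $s\downarrow -1+\tfrac3q$. The delicate point is therefore to route the polynomial count through the Type–I a priori bound and the concentration/packing estimate rather than through energy absorption, so that every constant stays polynomial in $M$ and the same $s$–uniform exponent $6q-9$ is obtained across the whole range, including the endpoint. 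This is where the precise Bernstein and scaling bookkeeping, and the transfer of the Type–I control from $v$ to $w$, must be carried out carefully.
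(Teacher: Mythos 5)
Your overall skeleton (rescale using the scale-invariant Type-I quantity, run a Calderón-type splitting, apply perturbed $\varepsilon$-regularity to $w$, and pack disjoint cylinders against a global budget) is the same as the paper's, but two of your concrete steps fail as stated.

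First, the splitting. You propose a pure Littlewood--Paley frequency truncation at $2^K$, with the low-frequency block going into $w_0$ and claimed to lie in $L^2$ ``by Bernstein.'' Bernstein's inequality only increases integrability on frequency-localised pieces: $\norm{\dot{\Delta}_j f}{L^{p_2}}\lesssim 2^{jd(1/p_1-1/p_2)}\norm{\dot{\Delta}_j f}{L^{p_1}}$ requires $p_2\geq p_1$. Since $q>2$ here, there is no inequality of the form $\norm{\dot{\Delta}_j f}{L^2}\lesssim \norm{\dot{\Delta}_j f}{L^q}$ on $\R^3$ (a widely spread bump with unit $L^q$ norm has arbitrarily large $L^2$ norm), so the low-frequency part of $\Bes{s}{q}{\infty}$ data is simply not in $L^2$ and your exponent $\tfrac32-\tfrac3q-s$ has no meaning. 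This is why the paper does not use a frequency cutoff but the Albritton--Barker splitting (Proposition \ref{genBesovsplit} / Corollary \ref{usefulBesovsplit}), which splits each dyadic block further by amplitude so as to genuinely land one piece in $\Bes{0}{2}{1}\hookrightarrow L^2$ and the other in a subcritical $\Bes{-1+\frac3p+\delta}{p}{1}$; the quantitative trade-off $\norm{w_0}{L^2}\lesssim\epsilon^{-\gamma_2}M$, $\norm{m_0}{\Bes{-1+\frac3p+\delta}{p}{1}}\lesssim\epsilon^{\gamma_1}M$ with $\gamma_2/\gamma_1=2(q-2)$ is exactly what produces the exponent $6q-9$ at the end; your free parameter $K$ cannot reproduce this.

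Second, the rescaling and packing. You rescale by ``any slice $t_n$'' and then invoke a Vitali selection of disjoint cylinders ``at arbitrarily small scales.'' This loses the count twice over: a Vitali argument retains only a bounded-overlap subfamily rather than all $N$ centres, and the concentration lower bound $\varrho^{-2}\iint_{Q_\varrho}|w|^3+|\tilde\pi|^{3/2}\geq\epsilon$ (or $\varrho^{-1}\iint|\nabla w|^2\geq\varepsilon_\ast$) compared against a fixed global budget degenerates as $\varrho\to0$. The missing idea is that $n$ must be chosen \emph{depending on the singular configuration} so that $\min_{i\neq j}|x_i-x_j|\geq 2(-t_n)^{1/2}$; after rescaling by $\lambda=(-t_n)^{1/2}$ the singular points are $1$-separated, the cylinders $Q_{1/2}(y_i,1)$ are automatically pairwise disjoint at a \emph{fixed} unit scale, and summing the $\varepsilon$-regularity lower bounds against the global $L^3_{t,x}$ and pressure budgets (which requires the pressure decomposition $\tilde\pi=\pi_1+\pi_2$ and Calderón--Zygmund estimates, absent from your sketch) gives $N\lesssim E^{3/2}+E$. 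Finally, your worry that the Gr\"onwall route degenerates at $s=-1+\tfrac3q$ is unfounded: the Gr\"onwall factor is $\exp(C_{q,s}T^{-\delta}\cdot T^{\delta})=\exp(C_{q,s})$, independent of $M$, and the subcritical mild solution only needs to exist on the finite interval $(0,T)$, so no smallness of the critical norm is required there.
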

\pagebreak
\subsection{Comparison with previous literature} 
\subsubsection{Theorem \ref{Global solution supercritical Besov data} and (\textbf{Q1})} Let us first review the literature for \textbf{(Q1)} in Lebesgue spaces $L^p$ for $p\geq 2$:
We have already mentioned Leray's result from supercritical\footnote{Subcritical, critical and supercritical means in the sense of Section \ref{criticaldefn}.} $L^2(\R^3)$ data. For small enough critical $L^3(\R^3)$ initial data, it has also been shown independently by Giga and Kato that there exists a global-in-time smooth solution to (\ref{Navier-Stokes equations}); see \cite{Kato84, Giga}. We also mention the existence of \textit{local}-in-time solutions to (\ref{Navier-Stokes equations}) for subcritical initial data in $L^p(\R^3)$ for $p>3$ (see \cite{Leray, Kato64, FJR72}). Later, Calderón \cite{Calderón} filled the remaining supercritical gap between these results, showing that supercritical initial data in $L^p(\R^3)$ for $2<p<3$ gives rise to global-in-time weak solutions to (\ref{Navier-Stokes equations}). This was achieved by splitting the initial data $u_0=w_0+m_0$ into an energy piece and a small critical piece: $w_0\in L^2(\R^3)$ and $m_0 \in L^3(\R^3)$. Then the Giga-Kato construction is used to construct a global-in-time mild solution $m$ (up to a certain time) corresponding to $m_0$, and $m$ lives in a critical space. Next, one wishes to apply arguments similar to Leray to find a weak solution $w$ to the $m$-perturbed Navier-Stokes equations:
\begin{equation}\label{Perturbed Navier-Stokes equations} \tag{m-NSE}
    \partial_t w -\Delta w +(m \cdot \nabla)w + (w\cdot \nabla)w + (w \cdot \nabla)m +\nabla \tilde{\pi} =0,\qquad   \text{div}(w) = 0.
\end{equation}
Usually, the problem with weakly solving these equations essentially comes from trying to control the energy of $w$, through the integral 
\begin{equation}
    \int_0^t\int_{\R^3}(w \cdot\nabla) w \cdot m \;dxds. \label{Calderon integral}\end{equation}
In Calderón's setting, $m$ lives in a certain critical space which allowed this integral to be controlled\footnote{This is made clear in lines (\ref{Calderoncontrol1})-(\ref{Calderoncontrol2}).} giving the energy bounds for $w$ that we require. Then $u:=m+w$ weakly solves (\ref{Navier-Stokes equations}) with pressure $\pi:=\tilde{\pi}+\pi_m$ (where $\pi_m$ is pressure associated to $m$). Calderón used a priori bounds to show that this solution could be indefinitely extended in time.

Calderón's paper has gone on to inspire many notions of global weak solutions. Lemarié-Rieusset (see \cite{Lemarie}) extended Calderón's result to show that there exist global-in-time weak solutions corresponding to initial data in $L^2_{uloc}(\R^3)$ with certain decay properties. Seregin and Šverák \cite{largeL3data} showed that for \textit{any} size initial data in $L^3(\R^3)$, there exists a global-in-time weak $L^3$-solution. Next, Barker, Seregin \& Šverák \cite{L3infweaksolns} used an initial data splitting reminiscent of Calderón's splitting (but in the Lorentz space setting) and showed that there exist global-in-time weak $L^{3,\infty}$-solutions for initial data in $L^{3,\infty}(\R^3)$ of any size. Finally, Albritton and Barker \cite{AlbrBarker} used a Calderón-type splitting of initial data in the Besov space setting to prove that there exist global-in-time weak Besov-solutions corresponding to initial data of any size in critical Besov spaces $\Bes{-1+\frac{3}{p}}{p}{\infty}(\R^3)$, $p>3$. The splitting technique from Calderón's paper has been influential in other areas of the theory of the Navier-Stokes equations. For example, it has inspired similar splittings in \cite{asymptoticLp,bradshawasymptotic} for asymptotic stability, \cite{eventualregTsai} for eventual regularity and \cite{Barkeruniq17, PierreWeakStrong} for weak-strong uniqueness. It has also inspired works outside of the Navier-Stokes equations, \cite{Calderoninsemilinear, CalderoninMHD, Calderoningeostrophic} for example.   

Theorem \ref{Global solution supercritical Besov data} fills a similar \textit{supercritical} gap to Calderón's result, but in the setting of Besov spaces. This is the gap that we fill: first, Leray gives a global solution corresponding to initial data in $\Bes{0}{2}{2}(\R^3)=L^2(\R^3)$. For small enough subcritical and critical Besov initial data in $\Bes{s}{p}{\infty}(\R^3)$ for $p\geq 3$ and $-1+\frac{3}{p}\leq s<0$, the mild solution theory is known: we present it again in this paper (Theorem \ref{Besovmildsoln}) for completeness.  Using an initial data splitting (based on the one seen in \cite{AlbrBarker}) corresponding to Calderón's initial data splitting, we prove Theorem \ref{Global solution supercritical Besov data} in Section 4, filling the supercritical gap. Theorem \ref{Global solution supercritical Besov data} actually extends Calderón's result, via the embedding $L^p(\R^3) \hookrightarrow \Bes{0}{p}{\infty}(\R^3) \hookrightarrow \Bes{-3\left(\frac{1}{p}-\frac{1}{q}\right)}{q}{\infty}(\R^3)$ (and choosing $q>p$ appropriately).

The trick to prove this theorem again partially lies in controlling the integral (\ref{Calderon integral}). If we try splitting $u_0 = m_0+w_0$ purely into an $L^2$ piece and a critical piece exactly like Calderón, our situation is not as nice in the Besov space setting: where we would need to gain control of (\ref{Calderon integral}), we would end up with a logarithmic singularity\footnote{To see this observe the following: in line (\ref{subcritrequirement}), setting $\delta= 0$ corresponds to a critical (Kato) space. The resulting integrand is $s^{-1}$. Related issues are also observed in \cite{AlbrBarker, Barkeruniq17}}. Hence, we require $m$ to live in a subcritical space. On the other hand, if we split $u_0=m_0+w_0$ purely into an $L^2$ piece and a subcritical piece, then the mild solution $m$ corresponding to $m_0$ would only be defined locally-in-time. 

We combine these two ideas, using the persistency property of the Besov space initial data splitting (line (\ref{persistency1})) and the observation that one may propagate the subcriticality properties of the initial data to a mild solution through Picard iterates (see Lemma \ref{contractionLemma} (ii)). Indeed, the persistency property allows one to perform a splitting of the initial data into $L^2$ and subcritical pieces, then use Besov interpolation to ensure that a critical Besov norm of $m_0$ is sufficiently small (see line (\ref{critpart})) to generate a global-in-time mild solution living in a critical space. Propagating the subcriticality of the initial data will ensure that this mild solution also lives in a subcritical space (see line (\ref{subcritkatoestim})).  

Finally, let us mention that the case $q=\infty$ for Theorem A appears out of reach; the paper \cite{IllposednessBesov} of Bourgain and Pavlović shows ill-posedness\footnote{In the sense that a “norm inflation” occurs in finite time.} for the Cauchy problem for initial data in $\Bes{-1}{\infty}{\infty}$. A Leray-Hopf solution with a discontinuity at $t=0$ with respect to the $\Bes{-1}{\infty}{\infty}$ metric was also obtained by Cheskidov and Shvydkoy in \cite{Shvydkoyillposed}.

\subsubsection{Theorem \ref{Type1 singular set} and (\textbf{Q2})} Our second question seeks to improve our understanding of potential singular solutions of (\ref{Navier-Stokes equations}). Inspired by the Navier Stokes rescaling (\ref{NSrescale}), Leray \cite{Leray} first posed the blow-up ansatz of backward self-similar solutions. These are solutions of the form 
\begin{equation}
    v(x,t) = \frac{1}{\sqrt{\lambda (T-t)}}V\left(\frac{x}{\sqrt{\lambda (T-t)}}\right)
\end{equation}
which are invariant with respect to the Navier-Stokes rescaling  (\ref{NSrescale}). However, backward self-similar solutions were ruled out in \cite{selfsimNeustupa} and \cite{selfsimTsai}. Other blow-up ansätze  have been investigated, such as discretely backwards self-similar (DSS) solutions \cite{Chae2016RemovingDS}. All of the blow-up ansätze mentioned here satisfy more general Type-I bounds (for example, see \cite{Chae2016RemovingDS} for backwards DSS solutions), making this a natural setting to study potential singularity formation. Furthermore, we have already drawn attention to the paper \cite{computerblowup}; in this paper, numerical computations suggest potentially singular behaviour for the \textit{axisymmetric} Navier-Stokes equations and a `nearly' self-similar blow-up profile was observed\footnote{We note that Type-I blow-up has been ruled out in the axisymmetric setting \cite{notypeIaxis}. However, it has not yet been ruled out for the general Navier-Stokes equations.} (see Sections 3.5-3.6 in particular). 

A natural avenue of investigation is to study the \textit{structure} of the putative singular set. For instance, the parabolic Hausdorff measure of the singular set is considered in \cite{CKN82, logimprovement, LeiRen, Barkersupercritpress, RobinsonBox} to name a few\footnote{The box-counting dimension of the singular set was also considered in \cite{RobinsonBox}.}.  
The structure of the singular set under the Type-I ansatz has been previously considered in the literature as well; the paper \cite{ChoeWolfYang} of Choe, Wolf and Yang demonstrates that a suitable finite-energy solution $u:\R^3\times [0,T^*] \to \R^3$ which loses smoothness at time $T^*$ and satisfies the Type-I bound \begin{equation}
    \norm{u}{L^\infty_t((0,T^*);L^{3,\infty}(\R^3))} \leq M \qquad \text{(where} \; M\geq 1 \; \text{is sufficiently large)} \label{weakl3typeI}
\end{equation}
may only have a finite number of singular points at $T^*$, qualitatively depending on $M$. This result was then extended by Seregin \cite{sereginnote} to hold locally for suitable weak solutions. We note that these are qualitative results. In \cite{spatialconc}, Barker and Prange quantified this result, bounding the number of singular points at $T^*$ under the Type-I assumption (\ref{weakl3typeI}) above by $\exp(\exp(M^{1024})).$ These results all rely on unique continuation and  backward uniqueness results as in \cite{ESS03}. Later, Barker \cite{Barkersing21} instead used elementary arguments to improve the previous bound: for a Leray-Hopf solution $u:(-1,\infty)\to \R^3$ that first blows-up at time $0$ satisfying the  more general time-slice Type-I bound 
\begin{equation}
    \sup_n\norm{u(\cdot, t_n)}{L^{3,\infty}(\R^3)} \leq M \qquad \text{(for some increasing sequence} \; t_n \uparrow 0,\; M\geq 1)\label{weakl3timeslicetypeI}
\end{equation}
the number of singular points at $0$ is bounded above by $C_{univ}M^{20}.$ 
In this paper, we show that under a Besov space Type-I blow-up ansatz (\ref{BesovtimeslicetypeI}), the number of points of the singular set is quantitatively bounded above in terms of the Type-I bound. This setting for the Type-I assumption is less regular than what has been considered in the literature so far to the author's knowledge. 

The proof of Theorem \ref{Type1 singular set} is based on the elementary arguments from the proof of Theorem 2 of \cite{Barkersing21}. However, we utilise the Calderón-type splitting for Besov spaces to achieve our quantitative estimates. Indeed, this splitting allows us to move from the difficult-to-work-with Besov setting, and instead deal with a nicely behaved mild solution $m$, and a Leray-type solution $w$ to the $m$-perturbed Navier-Stokes equation. This allows us to utilise mild solution estimates for $m$ and Leray-type energy estimates $w$ in conjunction with an $\epsilon$-regularity criterion to bound the number of singular points.

\section{Preliminaries}
\subsection{General notation}
Throughout this paper, we adopt Einstein summation convention. We also adopt the convention where constants may change from line-to-line unless made otherwise clear. \\We often write $X \lesssim_{a,b,c...} Y$ if there exists a universal constant $C>0$ depending on parameters $a,b,c...$ such that $X \leq C_{a,b,c...}Y.$ \\The notation $X \sim_{a,b,c...} Y$ means that both $X \lesssim_{a,b,c...} Y$ and $X \gtrsim _{a,b,c...} Y.$ 
\\\\In the standard basis $(\textbf{e}_i)$ for $\R^3$, the tensor product of two vectors $a=a_i\textbf{e}_i,$ $b=b_i\textbf{e}_i$ is the matrix $a\otimes b$ with entries
$$(a\otimes b)_{ij} := a_ib_j.$$
\\For $x_0\in \R^n$, $r>0$ we denote the ball
$$B_r(x_0):= \left\lbrace x\in \R^n : |x-x_0|<r \right\rbrace.$$
We often write $B_r := B_r(0)$.
For $z_0= (x_0,t_0) \in \R^n \times \R$, $r>0$ we denote the parabolic cylinder
$$Q_r(z_0) := B_r(x_0) \times (t_0-r^2,t_0).$$
We often write $Q_r := Q_r(0,0)$.
\\\\For any Banach space $(X, \norm{\cdot}{X})$, $a<b$, $p\in[1,\infty)$ we denote the mixed space $L^p(a,b;X)$ to be the Banach space of strongly measurable functions $f:(a,b)\to X$ such that
$$\norm{f}{L^p(a,b;X)}:= \left( \int_a^b \norm{f(t)}{X}^p\right)^\frac{1}{p} < \infty$$
with the usual modifications for $p=\infty$ or for locally integrable spaces. When it is clear, we will often denote $$L^p_tL^q_x := L^p(0,T;L^q(\R^n)) \qquad \text{and} \qquad L^p_{t,x}:= L^p_tL^p_x.$$

\subsection{Besov and Kato spaces}
$\Bes{s}{p}{q}(\R^d)$ denotes the Besov space with regularity index $s$ with integrability $p$ and frequency summation $q$ over $\R^d$; see the Appendix \ref{Besovspaces}. We have the following embedding of Besov spaces; this is Proposition 2.20 in \cite{Bahouri}:
\begin{Proposition} \label{Besov embeddings} Let $s \in(-\infty,\frac{d}{p})$, $1\leq p_1 \leq p_2 \leq \infty$ and $1\leq r_1 \leq r_2 \leq \infty$. Then we have the continuous embedding $\Bes{s}{p_1}{r_1}(\R^d) \hookrightarrow \Bes{s-d\left(\frac{1}{p_1}-\frac{1}{p_2} \right)}{p_2}{r_2}(\R^d)$:
\begin{align*}
\norm{f}{\Bes{s-d\left(1/p_1-1/p_2\right)}{p_2}{r_2}} \lesssim_{d,p_1,p_2,r_1,r_2}\norm{f}{\Bes{s}{p_1}{r_1}}.
\end{align*}
\end{Proposition}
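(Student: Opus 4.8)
The plan is to derive the embedding directly from the Littlewood--Paley characterisation of the homogeneous Besov norm, combining Bernstein's inequality (to trade integrability $p_1 \to p_2$ for a gain in the frequency weight) with the elementary sequence-space nesting $\ell^{r_1} \hookrightarrow \ell^{r_2}$ (to trade the summation index $r_1 \to r_2$). Writing $\dot{\Delta}_j$ for the homogeneous Littlewood--Paley block localising to frequencies $|\xi| \sim 2^j$, recall that $\norm{f}{\Bes{\sigma}{p}{r}} = \bigl\| \, 2^{j\sigma}\norm{\dot{\Delta}_j f}{L^p} \, \bigr\|_{\ell^r(\Z)}$ for any admissible triple $(\sigma,p,r)$.

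First I would invoke Bernstein's inequality: since $\dot{\Delta}_j f$ has Fourier support in an annulus of size $\sim 2^j$ and $p_1 \le p_2$, one has $\norm{\dot{\Delta}_j f}{L^{p_2}} \lesssim_{d,p_1,p_2} 2^{jd\left(\frac{1}{p_1}-\frac{1}{p_2}\right)}\norm{\dot{\Delta}_j f}{L^{p_1}}$, uniformly over $j \in \Z$. Setting $\sigma := s - d\left(\frac{1}{p_1}-\frac{1}{p_2}\right)$ and multiplying through by $2^{j\sigma}$, the powers of $2^j$ combine exactly so that, for every $j \in \Z$,
\[
2^{j\sigma}\norm{\dot{\Delta}_j f}{L^{p_2}} \lesssim_{d,p_1,p_2} 2^{js}\norm{\dot{\Delta}_j f}{L^{p_1}}.
\]

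Next I would take the $\ell^{r_2}(\Z)$ norm of both sides in $j$, and then apply the nesting $\norm{a}{\ell^{r_2}} \le \norm{a}{\ell^{r_1}}$ (valid precisely because $r_1 \le r_2$): the left-hand side is exactly $\norm{f}{\Bes{\sigma}{p_2}{r_2}}$, while the right-hand side is controlled by $\norm{2^{js}\norm{\dot{\Delta}_j f}{L^{p_1}}}{\ell^{r_1}} = \norm{f}{\Bes{s}{p_1}{r_1}}$, which is the claimed estimate with the asserted dependence of the constant on $d,p_1,p_2,r_1,r_2$.

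The one point requiring care --- more a matter of correct bookkeeping than a genuine obstacle --- is ensuring the whole argument takes place inside a well-defined space of tempered distributions. This is exactly the role of the hypothesis $s < \frac{d}{p_1}$: it guarantees that $\Bes{s}{p_1}{r_1}(\R^d)$ admits a realisation as an honest space of tempered distributions (equivalently, that the low-frequency sum $\sum_{j \le 0}\dot{\Delta}_j f$ converges in $\mathcal{S}'$), so that the termwise estimates above assemble into a genuine continuous embedding rather than an identity only modulo polynomials. The analytic heart of the matter, Bernstein's inequality, is entirely standard and decouples cleanly from this realisation issue.
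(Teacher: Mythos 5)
Your argument is correct and is the standard proof: the paper does not prove this proposition itself but simply cites Proposition 2.20 of Bahouri--Chemin--Danchin, whose proof is exactly your combination of Bernstein's inequality on each Littlewood--Paley block (to convert $L^{p_1}$ into $L^{p_2}$ at the cost of the factor $2^{jd(1/p_1-1/p_2)}$) with the nesting $\ell^{r_1}\hookrightarrow\ell^{r_2}$. Your closing remark about the role of $s<\frac{d}{p_1}$ in guaranteeing a genuine realisation in $\mathcal{S}'(\R^d)$ is also the right bookkeeping point, so nothing further is needed.
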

We also have the following interpolation theorem for Besov spaces; this is Theorem 2.80 in \cite{Bahouri}:
\begin{Theorem} \label{Besovinterpolation}
    There exists a universal constant $C>0$ such that, if $s_1<s_2$, $\theta \in(0,1)$ and $1\leq p,r \leq \infty$, then 
    \begin{align}
    \norm{f}{\Bes{\theta s_1+(1-\theta)s_2}{p}{r}} &\leq \norm{f}{\Bes{s_1}{p}{r}}^\theta\norm{f}{\Bes{s_2}{p}{r}}^{1-\theta},
    \\    \norm{f}{\Bes{\theta s_1+(1-\theta)s_2}{p}{1}} &\lesssim \frac{1}{s_2-s_1}\left(\frac{1}{\theta} - \frac{1}{1-\theta}\right)\norm{f}{\Bes{s_1}{p}{\infty}}^\theta\norm{f}{\Bes{s_2}{p}{\infty}}^{1-\theta}.
    \end{align}
\end{Theorem}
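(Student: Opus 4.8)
The plan is to argue directly from the Littlewood--Paley characterization of the homogeneous Besov norm recalled in Appendix \ref{Besovspaces}, namely
\[
\norm{f}{\Bes{s}{p}{r}} = \norm{\left(2^{js}\norm{\Delta_j f}{L^p}\right)_{j\in\Z}}{\ell^r(\Z)},
\]
where $\Delta_j$ denotes the $j$-th dyadic frequency projection. Writing $s := \theta s_1 + (1-\theta)s_2$, the pointwise-in-frequency identity
\[
2^{js}\norm{\Delta_j f}{L^p} = \left(2^{js_1}\norm{\Delta_j f}{L^p}\right)^{\theta}\left(2^{js_2}\norm{\Delta_j f}{L^p}\right)^{1-\theta}
\]
reduces both estimates to summation inequalities for the nonnegative sequences $a_j := 2^{js_1}\norm{\Delta_j f}{L^p}$ and $b_j := 2^{js_2}\norm{\Delta_j f}{L^p}$, whose $\ell^r$-norms are $\norm{f}{\Bes{s_1}{p}{r}}$ and $\norm{f}{\Bes{s_2}{p}{r}}$ respectively.

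For the first inequality I would simply apply the discrete Hölder inequality in $\ell^r$ with conjugate exponents $1/\theta$ and $1/(1-\theta)$ to the product $a_j^{\theta}b_j^{1-\theta}$, which yields $\norm{(a_j^{\theta}b_j^{1-\theta})_j}{\ell^r}\leq\norm{(a_j)_j}{\ell^r}^{\theta}\norm{(b_j)_j}{\ell^r}^{1-\theta}$ with constant exactly $1$; the endpoint $r=\infty$ is immediate by taking the supremum termwise. This is the routine half.

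The second inequality calls for a frequency-splitting argument. Setting $A := \norm{f}{\Bes{s_1}{p}{\infty}}$ and $B := \norm{f}{\Bes{s_2}{p}{\infty}}$, every block satisfies the two competing bounds $\norm{\Delta_j f}{L^p}\leq 2^{-js_1}A$ and $\norm{\Delta_j f}{L^p}\leq 2^{-js_2}B$. Inserting these into $\norm{f}{\Bes{s}{p}{1}} = \sum_{j\in\Z} 2^{js}\norm{\Delta_j f}{L^p}$ gives the two geometric majorants $2^{j(1-\theta)(s_2-s_1)}A$ and $2^{-j\theta(s_2-s_1)}B$; the first decays at low frequencies and the second at high frequencies. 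I would split the sum at the integer $N$ nearest the balance point $N\sim (s_2-s_1)^{-1}\log_2(B/A)$ where the two majorants coincide, using the low-frequency bound for $j\leq N$ and the high-frequency bound for $j>N$, and then sum the two resulting convergent geometric series.

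The only point demanding care---and hence the crux---is extracting the sharp constant. The two geometric series produce denominators $1-2^{-(1-\theta)(s_2-s_1)}$ and $1-2^{-\theta(s_2-s_1)}$, and the elementary estimate $1-2^{-x}\gtrsim x$ for $x$ in a bounded range turns these into factors of order $\frac{1}{(1-\theta)(s_2-s_1)}$ and $\frac{1}{\theta(s_2-s_1)}$. Evaluating the geometric prefactors at the balance point $N$ collapses both tails to $A^{\theta}B^{1-\theta}$, so the two contributions combine to $\frac{1}{s_2-s_1}\left(\frac{1}{\theta}+\frac{1}{1-\theta}\right)A^{\theta}B^{1-\theta}$, which is the asserted bound. (The symmetric role played by the two tails strongly suggests that the factor $\frac{1}{\theta}-\frac{1}{1-\theta}$ printed in the statement should read $\frac{1}{\theta}+\frac{1}{1-\theta}$.)
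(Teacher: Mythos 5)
The paper does not actually prove this statement---it is quoted verbatim (sign typo included) from Theorem 2.80 of Bahouri--Chemin--Danchin---and your argument is precisely the standard proof given in that reference: termwise H\"{o}lder in $\ell^r$ for the first inequality, and for the second a high/low frequency splitting of the $\ell^1$ sum at the balance point $2^{N(s_2-s_1)}\sim B/A$, with the two geometric-series denominators producing the factors $\left(\theta(s_2-s_1)\right)^{-1}$ and $\left((1-\theta)(s_2-s_1)\right)^{-1}$. Your diagnosis of the sign is correct: with the printed $\frac{1}{\theta}-\frac{1}{1-\theta}$ the constant vanishes at $\theta=\frac{1}{2}$ and is negative for $\theta>\frac{1}{2}$, so it must read $\frac{1}{\theta}+\frac{1}{1-\theta}$, as in the cited source. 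The only point you gloss over is that evaluating the two tails at the balance point honestly yields $C\left(1+\frac{1}{s_2-s_1}\left(\frac{1}{\theta}+\frac{1}{1-\theta}\right)\right)$ rather than literally $\frac{C}{s_2-s_1}\left(\frac{1}{\theta}+\frac{1}{1-\theta}\right)$; this discrepancy is inherited from the reference and is immaterial for the paper, which only invokes the estimate with $s_1$, $s_2$, $\theta$ confined to a fixed compact range.
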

We will make use of the heat flow characterisation of Besov spaces with negative regularity index; this is a variant of Theorem 2.34 in \cite{Bahouri}:
\begin{Proposition} \label{heat characterisation}
For all $-\infty<s<0$, and $f \in \Bes{s}{p}{\infty}(\R^d)$,
\begin{equation}
\norm{f}{\Bes{s}{p}{\infty}} \sim_s \;\esssup_{t>0}t^{-\frac{s}{2}}\norm{e^{t\Delta}f}{L^p}. \nonumber
\end{equation}
\end{Proposition}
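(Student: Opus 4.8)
The plan is to prove the two-sided estimate by passing through the homogeneous Littlewood--Paley decomposition underlying the definition of $\Bes{s}{p}{\infty}$, namely $\norm{f}{\Bes{s}{p}{\infty}} = \sup_{j\in\Z} 2^{js}\norm{\dot\Delta_j f}{L^p}$, where $\dot\Delta_j$ projects onto frequencies $|\xi|\sim 2^j$ via a profile $\psi(2^{-j}\xi)$. Since $e^{t\Delta}$ is the Fourier multiplier $e^{-t|\xi|^2}$, both inequalities reduce to understanding how the heat flow interacts with spectral localisation, and I would establish each direction separately.

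For the bound $\esssup_{t>0}t^{-s/2}\norm{e^{t\Delta}f}{L^p}\lesssim_s \norm{f}{\Bes{s}{p}{\infty}}$, I would decompose $e^{t\Delta}f=\sum_j e^{t\Delta}\dot\Delta_j f$ and invoke the standard spectral-localisation (Bernstein-type) estimate that, on functions with frequency support in $|\xi|\sim 2^j$, the heat semigroup contracts exponentially: $\norm{e^{t\Delta}\dot\Delta_j f}{L^p}\lesssim e^{-ct2^{2j}}\norm{\dot\Delta_j f}{L^p}$ for an absolute $c>0$. Summing and inserting $\norm{\dot\Delta_j f}{L^p}\leq 2^{-js}\norm{f}{\Bes{s}{p}{\infty}}$, then multiplying by $t^{-s/2}$ and writing $x_j=t2^{2j}$, bounds the left side by $\norm{f}{\Bes{s}{p}{\infty}}\sum_j e^{-cx_j}x_j^{-s/2}$. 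For each fixed $t$ the values $\{x_j\}$ form a geometric progression, so the sum is controlled uniformly in $t$; convergence at the low-frequency end ($j\to-\infty$, $x_j\to 0$) is precisely where the hypothesis $s<0$ (so $-s/2>0$) is used, and this is what produces the $s$-dependent constant.

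For the reverse bound $\norm{f}{\Bes{s}{p}{\infty}}\lesssim_s \esssup_{t>0}t^{-s/2}\norm{e^{t\Delta}f}{L^p}$, I would recover each block by choosing $t=2^{-2j}$, for which $t|\xi|^2\sim 1$ on the support of $\psi(2^{-j}\xi)$. Factoring $\psi(2^{-j}\xi)=G(2^{-j}\xi)\,e^{-t|\xi|^2}$ with the single fixed smooth annular bump $G(\eta):=\psi(\eta)e^{|\eta|^2}$ exhibits $\dot\Delta_j f=\phi_j(D)\,e^{t\Delta}f$, where $\phi_j(D)$ has symbol $G(2^{-j}\xi)$ and hence kernel $2^{jd}\check G(2^j\cdot)$ of $L^1$-norm $\norm{\check G}{L^1}$ independent of $j$; by Young's inequality $\phi_j(D)$ is bounded on $L^p$ uniformly in $j$. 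Thus $\norm{\dot\Delta_j f}{L^p}\lesssim \norm{e^{2^{-2j}\Delta}f}{L^p}$, and multiplying by $2^{js}=(2^{-2j})^{-s/2}$ and taking the supremum over $j$ gives the claim.

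The main obstacle is bookkeeping rather than genuine difficulty: verifying the uniform-in-$t$ summability of $\sum_j e^{-cx_j}x_j^{-s/2}$ over the dyadic progression (and checking that the constant degenerates only as $s\uparrow 0$, consistent with the logarithmic failure at the critical index $s=0$), and confirming the $j$-independent $L^p$ bound for the rescaled multipliers $\phi_j(D)$. Both rest on the spectral-localisation and Bernstein estimates recalled in Appendix \ref{Besovspaces}, so I would cite those lemmas and keep the argument concise.
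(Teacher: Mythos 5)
Your proposal is correct and is essentially the standard Littlewood--Paley proof of this characterisation: the paper itself gives no argument, deferring to Theorem 2.34 of Bahouri--Chemin--Danchin, whose proof proceeds exactly as you describe (exponential decay of $e^{t\Delta}$ on dyadic frequency blocks for one direction, with $s<0$ ensuring summability at low frequencies, and recovery of $\dot\Delta_j f$ from $e^{2^{-2j}\Delta}f$ via a uniformly $L^1$-bounded rescaled multiplier for the other). No gaps worth flagging beyond the routine remark that $t\mapsto\norm{e^{t\Delta}f}{L^p}$ is continuous on $(0,\infty)$, so evaluating the essential supremum at $t=2^{-2j}$ is legitimate.
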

Here, $e^{t\Delta}$ is the heat operator $e^{t\Delta}f:= \Gamma(\cdot,t)\ast f $ for $t>0$ where $\Gamma:\R^d\times(0,\infty)\to\R$ is the heat kernel. This characterisation motivates the definition of the following Kato spaces:

\begin{Definition} \label{Katospace} For $0<T\leq \infty$, $3\leq p\leq \infty$ and $0\leq\delta\leq 1-\frac{3}{p}$, the Kato space $K^{p,\delta}(T)$ is defined by
\begin{equation}
K^{p,\delta}(T) = \left\lbrace u\in L^\infty_{loc}(0,T;L^p): \norm{u}{K^{p,\delta}(T)}:= \esssup_{0<t<T} t^{\frac{1}{2}\left(1-\frac{3}{p}-\delta \right)}\norm{u(\cdot,t)}{L^p}<\infty \right\rbrace . \nonumber
\end{equation} 
For ease of notation, we set $K^p(T):=K^{p,0}(T)$.
\end{Definition}
Note that, for $\delta \in \left[0,1-\frac{3}{p}\right)$, the norm $\norm{e^{t\Delta}f}{K^{p,\delta}(\infty)}$ is equivalent to $\norm{f}{\Bes{-1+\frac{3}{p}+\delta}{p}{\infty}}$ by Proposition \ref{heat characterisation}. This is a useful observation when constructing mild solutions.
\begin{Remark}\label{kato is serrin}
For $p\in (3,\infty]$ and $0<T<\infty$, we have $K^{p,\delta}(T) \hookrightarrow L^q(0,T;L^p(\R^3))$, where $q$ is defined by the relation $\frac{2}{q}+\frac{3}{p}=\theta > 1-\delta$. 
\end{Remark}

\subsection{General definitions for the Navier-Stokes equations}
\subsubsection{Critical quantities for the Navier-Stokes equations} \label{criticaldefn}
Let us recall what it means for a scalar quantity $\mathcal{X}(u,\pi,u_0)$ to be critical with respect to the Navier-Stokes rescaling. For all $\lambda>0$, suppose that $\mathcal{X}$ satisfies $\mathcal{X}(u_\lambda,\pi_\lambda,u_{0,\lambda}) = \lambda^\alpha\mathcal{X}(u,\pi,u_0)$ on the whole space-time domain, where $$u_\lambda(x,t):=\lambda u(\lambda x, \lambda^2 t) , \pi_\lambda:= \lambda^2 \pi(\lambda x, \lambda^2 t), \quad \text{and} \quad u_{0,\lambda}:= \lambda u_0(\lambda x),$$ then:
\begin{itemize}
    \item if $\alpha>0$, then $\mathcal{X}$ is \textbf{subcritical} with respect with the Navier-Stokes rescaling.
    \item if $\alpha=0$, then $\mathcal{X}$ is \textbf{critical} with respect with the Navier-Stokes rescaling.
    \item if $\alpha<0$, then $\mathcal{X}$ is \textbf{supercritical} with respect with the Navier-Stokes rescaling.
\end{itemize}
In this paper, we are mainly interested in initial data classes. In particular, the Besov spaces $\Bes{s}{p}{\infty}(\R^3)$ with $p\geq 3$ and negative regularity index $s$ are subcritical if $-1+\frac{3}{p}<s<0$, critical if $s=-1+\frac{3}{p}$ or supercritical if $s<-1+\frac{3}{p}$.

\subsubsection{Leray-Hopf weak solutions}
A function $u$ is a \textbf{Leray-Hopf weak solution} to the Navier-Stokes equations on $\R^3\times(0,\infty)$ corresponding to divergence-free (in the sense of distributions) initial data $u_0 \in L^2$ if:
\begin{itemize}
    \item $u$ satisfies the Navier-Stokes equations in the sense of distributions.

    \item $u \in L^\infty((0,\infty);L^2(\R^3)) \cap L_{\textup{loc}}^2([0,\infty);\dot{H}^1(\R^3)). $

    \item  $u$ attains the initial value in the $L^2$ sense:
\begin{equation}
\lim_{t\to0^+}\norm{u(t)-u_0}{L^2} = 0. \nonumber
\end{equation}

    \item $u$ satisfies the (strong) energy inequality:
\begin{equation} \label{energy inequality}
\norm{u(\cdot,t)}{L^2}^2+2\int_s^t\norm{\nabla u(\cdot,\tau)}{L^2}^2 d\tau \leq \norm{u(\cdot,s)}{L^2}^2 \nonumber
\end{equation}
for almost every $s>0$ and for $s=0$, and for every $t>s$. 

  \end{itemize} 

\subsubsection{The (putative) singular set}\label{singularsection}
Let $v:\R^3\times (0,\infty)\to\R^3$ be a Leray-Hopf solution to (\ref{Navier-Stokes equations}). It is known that, if a solution first looses smoothness at a time $T$, then there exists a singular point in the following sense:
\\ A space-time point $(x_0,T)\in \R^3\times(0,\infty)$ is said to be a \textbf{singular point} of $v$ if $v\not\in L^\infty(Q_r(x_0,T))$ for all $r>0$ sufficiently small. Else, $(x_0,T)$ is called a \textbf{regular point} of $v$.
\\ We let $\sigma(T)$ denote the \textbf{singular set at time $\boldsymbol{T}$} the set of all singular points of $v$ that occur at time $T$. 
\\ Finally, we denote the $\mathbf{\varrho}$\textbf{-isolated singular set at time} $\boldsymbol{T}$ as 
\begin{equation}
\sigma_\varrho(T) := \Big\lbrace x \in \sigma(T) \ \Big\vert \ B_\varrho(x)\cap B_\varrho(y) = \emptyset \quad \forall y \in \sigma(T)\setminus\lbrace x \rbrace \Big\rbrace, \nonumber
\end{equation}
the set of all singular points of $v$ at time $T$ that are a distance of at least $\varrho$ from any other singular point of $v$ at time $T$.

\subsection{Mild solution notation}
 On the whole space, the \textbf{Leray projector} $\LP$ is defined as the operator\footnote{or as a Fourier multiplier with symbol $1-\frac{\xi \otimes \xi}{|\xi|^2}.$} $\LP := \text{Id} + \nabla (-\Delta)^{-1}\text{div}$. This maps functions of $L^2$ to the closed set of (weakly) divergence-free $L^2$ functions, and is a continuous mapping on homogeneous Besov spaces (see Proposition 2.30 of \cite{Bahouri}.) 
 Define 
\begin{align}
\left[ B(a,b) \right] (x,t) &= -\frac{1}{2}\int_0^te^{(t-\tau)\Delta}\LP \text{div} \left( a\otimes b +b\otimes a \right) d\tau
\end{align}
with $i$-th component
\begin{align}
\left[ B(a,b) \right]_i (x,t)  &= \int_0^t\int_{\R^3}K_{ijl}(x-y,t-\tau)a_j(y,\tau) b_l(y,\tau)  dyd\tau.
\end{align}
The operator $-e^{t\Delta}\LP\text{div}$ is a convolution operator with the Oseen tensor $K=(K_{ijl})$ as its kernel. See, for example, Chapter 11 of \cite{Lemarie}. It is known (for example, from pp.234-235 of \cite{SolonnikovRussian}) that the Oseen kernel $K_{ijl}$ satisfies the bound
\begin{equation}
\left\vert \partial_t^a\nabla_x^bK_{ijl}(x,t)\right\vert \leq \frac{C_{\alpha,\beta}}{\left( |x|^2+t\right)^{2+\alpha+\frac{\beta}{2}}}.
\end{equation}
In particular, we will use that
\begin{equation} \label{Oseen Lp bnd}
\norm{K(\cdot,t)}{L^p} \leq \frac{C}{t^{\frac{1}{2}\left(4-\frac{3}{p}\right)}}.
\end{equation}
A \textbf{mild solution} of the Navier-Stokes equation is a fixed point of the map
\begin{equation}
u \mapsto e^{t\Delta}u_0+B(u,u).
\end{equation}
We now recall how this lemma may be used to construct mild solutions to (\ref{Navier-Stokes equations}) from initial data in critical and subcritical Besov spaces \cite{CannoneBesovmild, PlanchonBesovmild}. This will establish notation for when we later propagate additional information from the initial data to the whole mild solution by using Lemma \ref{contractionLemma} (ii).
\begin{Theorem} \label{Besovmildsoln} Let $3<p<\infty$ and let $0\leq \delta<1-\frac{3}{p}$. Suppose that $v_0 \in \Bes{-1+\frac{3}{p}+\delta}{p}{\infty}(\R^3)$ is divergence-free. Then there exist constants $C_{p,\delta}^{(1)},C_{p,\delta}^{(2)}>0$ depending on $p$ and $\delta$ so that if
\begin{equation} \label{Besovinfty}
T^\frac{\delta}{2}\norm{v_0}{\Bes{-\left(1-\frac{3}{p}-\delta \right)}{p}{\infty}} \leq C_{p,\delta}^{(1)}, 
\end{equation}
then there exists a mild solution $v \in K^{p,\delta}(T)\cap K^{\infty,\delta}(T)$ to the Navier-Stokes equations satisfying
 
\begin{equation}
\norm{v}{K^{p,\delta}(T)}+\norm{v}{K^{\infty,\delta}(T)} \leq 2\left(\norm{e^{t\Delta}v_0}{K^{p,\delta}(T)} + \norm{e^{t\Delta}v_0}{K^{\infty,\delta}(T)}\right)\leq C_{p,\delta}^{(2)}T^{-\frac{\delta}{2}}.
\end{equation} 
\begin{proof}
    See Appendix \ref{proofofmildtheorem}.
\end{proof}
\end{Theorem}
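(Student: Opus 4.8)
The plan is to construct $v$ by the standard Picard/fixed-point scheme in the Banach space $X := K^{p,\delta}(T)\cap K^{\infty,\delta}(T)$, equipped with the norm $\norm{\cdot}{X} := \norm{\cdot}{K^{p,\delta}(T)} + \norm{\cdot}{K^{\infty,\delta}(T)}$, by applying the abstract contraction result of Lemma \ref{contractionLemma}(ii) to the map $v \mapsto e^{t\Delta}v_0 + B(v,v)$. This requires two ingredients: a linear estimate controlling $\norm{e^{t\Delta}v_0}{X}$ by the Besov norm of the data, and a bilinear estimate bounding $B$ as a map $X\times X \to X$ with a small operator norm. Divergence-freeness of the resulting solution is automatic, since it is built into $B$ through the Leray projector $\LP$.

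For the linear estimate, the $K^{p,\delta}$-part is immediate from the heat-flow characterisation of Proposition \ref{heat characterisation}: the weight $t^{\frac12(1-\frac3p-\delta)}$ is exactly $t^{-s/2}$ for $s = -1+\frac3p+\delta$, so $\norm{e^{t\Delta}v_0}{K^{p,\delta}(T)} \le \norm{e^{t\Delta}v_0}{K^{p,\delta}(\infty)} \sim_{p,\delta}\norm{v_0}{\Bes{-1+\frac3p+\delta}{p}{\infty}}$. For the $K^{\infty,\delta}$-part I would factor $e^{t\Delta} = e^{\frac t2\Delta}e^{\frac t2\Delta}$ and use the $L^p\to L^\infty$ smoothing $\norm{e^{\frac t2\Delta}g}{L^\infty}\lesssim t^{-\frac{3}{2p}}\norm{g}{L^p}$; since $\tfrac12(1-\delta) - \tfrac{3}{2p} = \tfrac12(1-\tfrac3p-\delta)$, this again reduces to the same Besov norm via Proposition \ref{heat characterisation}. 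Crucially, both bounds are independent of $T$.

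For the bilinear estimate I would use the Oseen-kernel bound (\ref{Oseen Lp bnd}), Young's convolution inequality and Hölder in space, followed by a Beta-function computation in time. For the $K^{p,\delta}$-component, writing $B(a,b)(t)$ as the time-convolution of $K(t-\tau)$ with $a\otimes b$, I estimate $\norm{a\otimes b(\tau)}{L^{p/2}} \le \norm{a(\tau)}{L^p}\norm{b(\tau)}{L^p}$ and place $K(t-\tau)\in L^{p'}$, giving
\begin{equation*}
\norm{B(a,b)(t)}{L^p} \lesssim \int_0^t (t-\tau)^{-\frac12(1+\frac3p)}\,\tau^{-(1-\frac3p-\delta)}\,d\tau \;\norm{a}{K^{p,\delta}(T)}\norm{b}{K^{p,\delta}(T)}.
\end{equation*}
Both exponents are strictly less than $1$ precisely because $p>3$ and $\delta\ge 0$, so the integral equals $c\,t^{-\frac12(1-\frac3p)+\delta}$; multiplying by the $K^{p,\delta}$-weight produces a clean factor $t^{\delta/2}$, hence $\norm{B(a,b)}{K^{p,\delta}(T)}\lesssim T^{\delta/2}\norm{a}{X}\norm{b}{X}$. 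The delicate point — and the step I expect to be the main obstacle across the full range $p>3$ — is the $K^{\infty,\delta}$-component: naively placing $a\otimes b$ in $L^{p/2}$ forces $K(t-\tau)\in L^{p/(p-2)}$, whose time singularity $(t-\tau)^{-\frac12(1+\frac6p)}$ is only integrable for $p>6$. The fix is to split the product by Hölder as $\norm{a\otimes b(\tau)}{L^p}\le \norm{a(\tau)}{L^\infty}\norm{b(\tau)}{L^p}$, using the $K^{\infty,\delta}$-norm on one factor and the $K^{p,\delta}$-norm on the other; then $K(t-\tau)\in L^{p'}$ again carries the harmless singularity $(t-\tau)^{-\frac12(1+\frac3p)}$, the $\tau$-weight becomes $\tau^{-(1-\frac{3}{2p}-\delta)}$, and the Beta integral once more yields $t^{\delta/2}$. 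This gives $\norm{B(a,b)}{K^{\infty,\delta}(T)}\lesssim T^{\delta/2}\norm{a}{X}\norm{b}{X}$, so $B:X\times X\to X$ is bounded with operator norm $\eta \le C_1 T^{\delta/2}$.

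Finally I would invoke Lemma \ref{contractionLemma}(ii) with $y_0 = e^{t\Delta}v_0$ and this $\eta$. The smallness hypothesis $T^{\delta/2}\norm{v_0}{\Bes{-1+\frac3p+\delta}{p}{\infty}} \le C^{(1)}_{p,\delta}$ is exactly what guarantees $\norm{y_0}{X}\le \frac{1}{4\eta}$ — the $T$-powers match because the linear bound is $T$-independent while $\eta\sim T^{\delta/2}$ — so the lemma produces a unique fixed point $v = e^{t\Delta}v_0 + B(v,v)$, the desired divergence-free mild solution, satisfying $\norm{v}{X}\le 2\norm{y_0}{X}\lesssim_{p,\delta} \norm{v_0}{\Bes{-1+\frac3p+\delta}{p}{\infty}}\le C^{(2)}_{p,\delta}T^{-\delta/2}$, which is the asserted chain of inequalities.
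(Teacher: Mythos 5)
Your proposal is correct and follows essentially the same route as the paper's Appendix~\ref{proofofmildtheorem}: the same Oseen-kernel/Young/H\"older estimates (including the key $L^\infty\times L^p$ splitting that avoids the non-integrable singularity for $3<p\leq 6$), the same Beta-function computations yielding the $T^{\delta/2}$ gain, and the same smallness condition feeding the abstract fixed-point lemma. The only differences are organisational: you run the contraction once in the intersection space $K^{p,\delta}(T)\cap K^{\infty,\delta}(T)$ (for which Lemma~\ref{contractionLemma}(i), not (ii), is the relevant part), whereas the paper first solves in $K^{p,\delta}(T)$ and then propagates to $K^{\infty,\delta}(T)$ via the persistency statement (ii); and you obtain the linear $K^{\infty,\delta}$ bound by heat-semigroup smoothing rather than the equivalent Besov embedding $\Bes{-1+\frac{3}{p}+\delta}{p}{\infty}\hookrightarrow\Bes{-1+\delta}{\infty}{\infty}$.
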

\begin{Remark}
    Looking at the proof in Appendix \ref{proofofmildtheorem}, it is easy to see that when $\delta=0$, we may take $T=\infty$ provided that $\norm{v_0}{\Bes{-\left(1-\frac{3}{p} \right)}{p}{\infty}} \leq C^{(1)}_{p,0}$ to find $\norm{v}{K^{p,0}(\infty)}+\norm{v}{K^{\infty,0}(\infty)}\leq
C_{p,0}^{(2)}.$
\end{Remark}

\subsection{The $\boldsymbol{v}$-perturbed Navier-Stokes equations}
It will often be useful to consider the $v$-perturbed Navier-Stokes equations: given a divergence-free function $v$, a solution $w$ satisfies the $v$-perturbed Navier-Stokes equations if it satisfies: 
\begin{equation} \label{v-pert NSE}
    \partial_t w -\Delta w +(v \cdot \nabla)w + (w\cdot \nabla)w + (w \cdot \nabla)v +\nabla \tilde{\pi} =0,\qquad   \text{div}( w )= 0.
\end{equation}
in the sense of distributions. Note that if $(v,\pi_v)$ satisfies (\ref{Navier-Stokes equations}) and $(w,\tilde{\pi})$ satisfies (\ref{v-pert NSE}), then $(v+w,\pi_v+\tilde{\pi})$ satisfies (\ref{Navier-Stokes equations}).
There is a generalised notion of a local weak suitable solution for the $v$-perturbed Navier-Stokes equations:
\begin{Definition} \label{suitablesolngen} Let $\Omega \subseteq \R^3$ and suppose that $v\in L^p_tL^q_x(\Omega)$ is divergence-free with $\frac{2}{p}+\frac{3}{q}<1$ and $q>3$. A pair $(w,\tilde{\pi})$ is a \textbf{suitable weak solution} to the $v$-perturbed Navier-Stokes equations on the domain $\Omega \times (a,b)$ if they satisfy them in the distributional sense and $$w \in L^{\infty}((a,b);L^2(\Omega)) \cap L^2((a,b);\dot{H}^1(\Omega)).$$ Moreover, $$\tilde{\pi}\in L^{3/2}(\Omega \times (a,b))$$ with $$
-\Delta\tilde{\pi} = \partial_i\partial_j(w_iw_j)+ 2\partial_i\partial_j(v_iw_j)
$$ for almost every $t\in(a,b)$. Finally, we assume that $w$ satisfies the generalised local energy inequality
\begin{multline*} 
\int_\Omega |w(x,t)|^2\phi(t)\,dx + 2\int_a^t\int_\Omega|\nabla w|^2\phi \;dxds \\ \leq \int_a^t\int_\Omega |w|^2(\partial_t \phi + \Delta \phi)\,dxds + \int_a^t\int_\Omega (|w|^2(w+v)+2\tilde{\pi}w)\cdot\nabla\phi \;dxds \\ + 2\int_a^t\int_\Omega (w\cdot\nabla\phi)(v\cdot w) + ((w\cdot \nabla) w)\cdot v\phi \;dxds
\end{multline*}
for almost every $t\in(a,b)$ and all non-negative $\phi \in C_0^\infty(\Omega \times (a,\infty))$.
\end{Definition}
Note that this coincides with the usual definition for suitable solutions to the Navier-Stokes equations when $v=0$. 
\par The following result will be useful in proving Theorem \ref{Besovenergy2} when we decompose a suitable weak solution to the Navier-Stokes equations into a piece that solves the Navier-Stokes equations and a piece that solves the perturbed equation. In particular, note that Remark \ref{kato is serrin} (with $\theta = 1-\frac{\delta}{2}<1$) implies that the condition $v\in K^{p,\delta}(T)$ will allow us to apply the following theorem:
\begin{Theorem}[\cite{peturbedCKN}] \label{peturbedCKN} Let $v\in L^p_tL^q_x(Q_\varrho)$ be divergence-free with $\frac{2}{p}+\frac{3}{q}<1$ and $q>3$. Let $(w,\tilde{\pi})$ be a suitable weak solution to the $v$-perturbed Navier-Stokes equations on $Q_\varrho$. Then there exists $\epsilon_{p,q} \in(0,1)$ and a universal constant $C>0$ such that the condition 
\begin{equation}
\frac{1}{\varrho^2}\int_{Q_\varrho}|w|^3+|\tilde{\pi}|^\frac{3}{2}dxdt \leq \epsilon_{p,q}
\end{equation} 
implies that $\norm{w}{L^\infty (Q_{\varrho / 2})} \leq C\varrho^{-1}$.
\end{Theorem}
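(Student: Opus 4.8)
The plan is to adapt the Caffarel\-li--Kohn--Nirenberg $\epsilon$-regularity machinery to the $v$-perturbed system, exploiting the subcriticality $\frac{2}{p}+\frac{3}{q}<1$ to treat the drift term $(v\cdot\nabla)w$ and the stretching term $(w\cdot\nabla)v$ as perturbations that are strictly lower order at fine scales. First I would remove $\varrho$ via the Navier--Stokes scaling: setting $W(y,s):=\varrho\,w(\varrho y,\varrho^2 s)$, $\Pi(y,s):=\varrho^2\tilde\pi(\varrho y,\varrho^2 s)$ and $V(y,s):=\varrho\,v(\varrho y,\varrho^2 s)$, the pair $(W,\Pi)$ is again a suitable weak solution of the $V$-perturbed equations on $Q_1$, the hypothesis becomes $\int_{Q_1}\left(|W|^3+|\Pi|^{3/2}\right)dy\,ds\leq\epsilon_{p,q}$, and the goal becomes $\norm{W}{L^\infty(Q_{1/2})}\leq C$. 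The structural fact that drives everything is
\begin{equation*}
\norm{V}{L^p_sL^q_y(Q_1)}=\varrho^{\,1-\frac{3}{q}-\frac{2}{p}}\,\norm{v}{L^p_tL^q_x(Q_\varrho)},
\end{equation*}
so the perturbation carries the strictly positive power $\gamma:=1-\frac{3}{q}-\frac{2}{p}>0$ of the scale, which is the reservoir of smallness I will draw on.

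Next I would work with the scale-invariant quantities
\begin{equation*}
A(r)=\sup_{-r^2<s<0}\frac{1}{r}\int_{B_r}|W|^2,\quad E(r)=\frac{1}{r}\int_{Q_r}|\nabla W|^2,\quad C(r)=\frac{1}{r^2}\int_{Q_r}|W|^3,\quad D(r)=\frac{1}{r^2}\int_{Q_r}|\Pi|^{3/2}.
\end{equation*}
Testing the generalised local energy inequality of Definition \ref{suitablesolngen} against a cutoff adapted to $Q_r$ bounds $A(r/2)+E(r/2)$ by the cubic term $C(r)$, the pressure--velocity interaction (controlled by $C(r)$ and $D(r)$ through Young's inequality), and the two genuinely new terms $\int(W\cdot\nabla\phi)(V\cdot W)$ and $\int((W\cdot\nabla)W)\cdot V\,\phi$. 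Estimating these by Hölder against $V\in L^p_sL^q_y$ together with a Gagliardo--Nirenberg interpolation of $W$ between $A(r)$ and $E(r)$, the exponent $\gamma>0$ forces each $V$-term to be bounded by $r^{\gamma}\norm{V}{L^p_sL^q_y(Q_r)}$ times powers of $A(r)+E(r)$, so these contributions are absorbable and vanish as $r\downarrow0$. For the pressure I would use $-\Delta\Pi=\partial_i\partial_j(W_iW_j)+2\partial_i\partial_j(V_iW_j)$, splitting $\Pi$ on $B_r$ into a Newtonian-potential part (estimated by Calderón--Zygmund theory, contributing $C(r)$ and a subcritical $V$--$W$ cross term) and a harmonic part (whose mean-value property yields the favourable decay $D(\theta r)\lesssim\theta\,D(r)$), giving a recursion $D(\theta r)\lesssim\theta D(r)+\theta^{-2}C(r)+(\text{$V$-terms})$.

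Combining these estimates and setting $\Phi(r):=A(r)+E(r)+C(r)+D(r)$ yields a closed inequality of the shape
\begin{equation*}
\Phi(\theta r)\leq K\theta^{\alpha}\Phi(r)+K\theta^{-\beta}\Phi(r)^{3/2}+K\,r^{\gamma}\norm{V}{L^p_sL^q_y(Q_r)}\bigl(1+\Phi(r)\bigr)
\end{equation*}
for suitable $\alpha,\beta>0$. I would first fix $\theta$ small so that $K\theta^{\alpha}\leq\frac14$, then choose $\epsilon_{p,q}$ small enough that the superlinear term is dominated once $\Phi(1)\lesssim\epsilon_{p,q}$, and use the positive power $r^{\gamma}$ (equivalently the absolute continuity of $r\mapsto\norm{V}{L^p_sL^q_y(Q_r)}$) to make the perturbation negligible after finitely many steps. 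Iterating produces geometric decay $\Phi(\theta^k)\lesssim\theta^{\kappa k}$ for some $\kappa>0$, that is, a Morrey/Campanato-type bound; this is precisely the input for the standard $\epsilon$-regularity bootstrap, which upgrades $W$ to a bounded (indeed Hölder-continuous) function on $Q_{1/2}$ with $\norm{W}{L^\infty(Q_{1/2})}\leq C$. Undoing the scaling returns $\norm{w}{L^\infty(Q_{\varrho/2})}\leq C\varrho^{-1}$.

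The hard part will be the control of the $V$-dependent terms: one must verify that the subcriticality $\frac{2}{p}+\frac{3}{q}<1$ genuinely converts both the energy-level terms and the pressure cross term into contributions carrying the positive power $r^{\gamma}$, so that they neither spoil the choice of $\theta$ and $\epsilon_{p,q}$ nor accumulate under iteration; the pressure decomposition with the extra source $2\partial_i\partial_j(V_iW_j)$ needs particular care. As an alternative I would mention a Lin-type blow-up/compactness scheme: along a putative counterexample sequence one normalises $W_k:=\delta_k^{-1/3}w_k$, rescales to a vanishing spatial scale so that the effective drift norm $\norm{V_k}{L^p_sL^q_y(Q_1)}\to0$, and passes to a limit solving the \emph{linear} Stokes system---for which the required decay is classical---reaching a contradiction. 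This route is conceptually cleaner but yields no explicit $\epsilon_{p,q}$, whereas the iteration delivers the quantitative dependence on $p$ and $q$ asserted in the statement.
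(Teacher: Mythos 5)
Your proposal and the paper's proof are doing very different amounts of work. The paper does not re-derive the $\epsilon$-regularity for the perturbed system at all: it quotes Theorem 4.1 of Li--Miao--Zheng \cite{peturbedCKN}, which gives exactly this conclusion but with a threshold $\epsilon\bigl(p,q,\norm{v}{L^p_tL^q_x(Q_\varrho)}\bigr)$ depending on the drift, and the only content of the paper's proof is the removal of that dependence: one rescales by $\lambda:=\norm{v}{L^p_tL^q_x(Q_\varrho)}^{-1/(1-\frac{2}{p}-\frac{3}{q})}$, which by subcriticality forces $\norm{v_\lambda}{L^p_tL^q_x(Q_{\varrho/\lambda})}=1$, applies the cited theorem with $\epsilon_{p,q}:=\epsilon(p,q,1)$, and undoes the scaling, under which both the hypothesis $\frac{1}{\varrho^2}\int_{Q_\varrho}|w|^3+|\tilde{\pi}|^{3/2}$ and the conclusion $\norm{w}{L^\infty(Q_{\varrho/2})}\leq C\varrho^{-1}$ are invariant. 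What you propose is essentially a re-derivation of the cited Theorem 4.1 itself, via the standard CKN/Lin iteration with the extra drift terms; that outline is reasonable as far as it goes.

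However, your argument as set up does not prove the statement actually asserted here, because it cannot deliver a threshold $\epsilon_{p,q}$ independent of $\norm{v}{L^p_tL^q_x(Q_\varrho)}$ --- and that independence is the whole point of this version of the theorem (it is needed later, in the proof of Proposition \ref{SinfBesbound}, where $\epsilon$ must depend only on $q$ and $s$). After your normalisation to $Q_1$ the drift norm is $\norm{V}{L^p_sL^q_y(Q_1)}=\varrho^{\gamma}\norm{v}{L^p_tL^q_x(Q_\varrho)}$, which ranges over all of $(0,\infty)$ as $v$ varies; the power $r^{\gamma}$ only generates smallness once $r$ is small, whereas at the initial scales of the iteration the term $K\,r^{\gamma}\norm{V}{L^p_sL^q_y(Q_r)}\bigl(1+\Phi(r)\bigr)$ is of size $\norm{V}{L^p_sL^q_y(Q_1)}$ and cannot be absorbed by any choice of $\theta$ and $\epsilon_{p,q}$ made uniformly in $v$. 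Saying the perturbation becomes ``negligible after finitely many steps'' does not rescue this: the number of steps, and hence all constants, would depend on $V$, and $\Phi$ can grow out of the smallness regime before the decay kicks in; the same uniformity problem infects your compactness alternative, where the rescaling needed to force $\norm{V_k}{L^p_sL^q_y(Q_1)}\to 0$ must depend on $\norm{v_k}{}$. The missing idea is precisely the paper's normalisation step: first rescale so that the drift norm equals $1$ (possible exactly because $1-\frac{2}{p}-\frac{3}{q}>0$), run your iteration (or invoke the known theorem) in that normalised situation to get a threshold depending only on $p$ and $q$, and then scale back. With that step inserted your scheme becomes a legitimate, self-contained alternative to citing \cite{peturbedCKN}; without it you prove at best the $v$-dependent version.
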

\begin{proof}[Proof of this version] This result is essentially Theorem 4.1 of Li, Miao and Zheng's paper \cite{peturbedCKN}, except in \cite{peturbedCKN}, $\epsilon_{p,q}$ is replaced by $\epsilon(p,q, \norm{v}{L^p_tL^q_x(Q_\varrho)})$. To obtain the version of the Theorem presented above, we employ a simple rescaling argument to remove the dependency of $\epsilon$ on $\norm{v}{L^p_tL^q_x}$. 
\\ Indeed, taking the Theorem 4.1 of \cite{peturbedCKN} and performing the Navier-Stokes rescaling gives
$$\norm{v_\lambda}{L^p_tL^q_x(Q_{\varrho/\lambda})}=\lambda^{1-\frac{2}{p}-\frac{3}{q}}\norm{v}{L^p_tL^q_x(Q_\varrho)}.$$
Then, using that $\frac{2}{p}+\frac{3}{q}<1$, we set $\lambda := \norm{v}{L^p_tL^q_x(Q_\varrho)}^{-\frac{1}{\left(1-\frac{2}{p}-\frac{3}{q}\right)}}$ to ensure that $\norm{v_\lambda}{L^p_tL^q_x(Q_{\varrho/\lambda})} =1$.
Then, we apply Theorem 4.1 of \cite{peturbedCKN}: there exists $\epsilon_{p,q}:=\epsilon(p,q,1)>0$ and\footnote{We note that, without loss of generality, we may assume that $\epsilon_{p,q}<1.$} a universal constant $C>0$ such that the condition 
\begin{equation}
\frac{1}{\varrho^2}\int_{Q_{\varrho}}|w|^3+|\tilde{\pi}|^\frac{3}{2}dxdt=\frac{\lambda^2}{\varrho^2}\int_{Q_{\varrho/\lambda}}|w_\lambda|^3+|\tilde{\pi}_\lambda|^\frac{3}{2}dxdt \leq \epsilon_{p,q} \label{rescaledpeturbedcond}
\end{equation} 
implies that $\norm{w_\lambda}{L^\infty (Q_{\varrho / 2\lambda})} \leq C(\frac{\varrho}{\lambda})^{-1}$. 
This entails that $\norm{w}{L^\infty (Q_{\varrho / 2})} \leq C\varrho^{-1}$. 
\end{proof}
Note that this Theorem coincides with the usual Caffarelli-Kohn-Nirenberg Theorem when $v=0$.
\section{Calderón-type splitting of Besov Space initial data}
The method of this paper is inspired by Calderón's seminal paper \cite{Calderón}.
To show the global existence of weak solutions with initial data in $L^p$ for $2<p<3$, Calderón used an elementary Lebesgue space splitting: if and $1 < a < p < b < \infty$ and $N>0$ and $u_0 \in L^p(\R^3)$ is divergence-free, then we may decompose $u_0=u_0^{>N}+u_0^{\leq N}$, where $u_0^{>N}$ and $u_0^{\leq N}$ are (weakly) divergence-free and satisfy
\begin{equation*}
\|u_0^{\leq N}\|^b_{L^b} \lesssim_bN^{\left(b-p\right)} \|u_0\|^p_{L^p} 
\end{equation*} 
and
\begin{equation*}
 \|u_0^{>N}\|^a_{L^a} \lesssim_aN^{-\left(p-a\right)} \|u_0\|^p_{L^p} .
\end{equation*} 
Calderón used $a=2$ and $b=3$ to split the initial data into two pieces. The $L^3$ ($b=3$) part along with a choice of $N$ small enough corresponds to a piece of critical initial data with sufficiently small norm. A global-in-time mild solution may be constructed corresponding to this piece according to mild theory. For the $L^2$ ($a=2$) part, one has an associated global-in-time weak solution with an energy estimate, controlled in terms of the size of the original initial data in $L^p$. 

A similar splitting is available for Besov spaces of the form $\Bes{s}{p}{\infty}$. First, we state a slightly more general splitting result for Besov spaces in $\R^d$, as seen in Section 5 of \cite{AlbrBarker}.  

\begin{Proposition} \label{genBesovsplit}
Let $1\leq p_1 <q< p_2\leq \infty$, $s,s_1 \in \R$, $d\in\N$.
Let $l_1$ be the unique line through points $(s,\frac{1}{q})$ and $(s_1,\frac{1}{p_1})$, let $l_2$ be the unique line through $(s,\frac{1}{q})$ with gradient $\frac{1}{d}$ and let $l_3$ be the horizontal line through the origin. Assuming $l_1$ and $l_2$ are distinct, let $D$ be the interior of the compact region bounded by these three lines. 
Then let $s_2\in\R$ be such that $(s_2,\frac{1}{p_2})\in D$, and let $\bar{s} \in\R$ be the unique value such that $(\bar{s},\frac{1}{q}) \in l_4$ where $l_4$ is the line connecting $(s_1,\frac{1}{p_1})$ and $(s_2,\frac{1}{p_2})$, and let $\tilde{s}:=s_2+\frac{d}{q}-\frac{d}{p_2}$. See Figure \ref{splitting diagram}.
Then there exists a constant $C$ depending continuously on all the above parameters such that for all $f\in\Bes{s}{q}{\infty}(\R^d)$ and $\epsilon>0$, there exist functions $f^{1,\epsilon} \in \Bes{s_1}{p_1}{1}(\R^d)$ and $f^{2,\epsilon} \in \Bes{s_2}{p_2}{1}(\R^d)$ such that

\begin{equation}
f=f^{1,\epsilon} +f^{2,\epsilon} ,
\end{equation}
\begin{equation}
\norm{f^{1,\epsilon}}{\Bes{s_1}{p_1}{1}} \leq CN^{-\left((1-\frac{q}{p_2})\frac{s-\bar{s}}{\tilde{s}-\bar{s}}+\frac{q}{p_1}-1\right)}\norm{f}{\Bes{s}{q}{\infty}},
\end{equation}
\begin{equation}
\norm{f^{2,\epsilon}}{\Bes{s_2}{p_2}{1}} \leq CN^{(1-\frac{q}{p_2})\frac{\tilde{s}-s}{\tilde{s}-\bar{s}}}\norm{f}{\Bes{s}{q}{\infty}},
\end{equation}
\begin{equation}
\norm{f^{1,\epsilon}}{\Bes{s}{q}{\infty}},\norm{f^{2,\epsilon}}{\Bes{s}{q}{\infty}} \leq C\norm{f}{\Bes{s}{q}{\infty}}. \label{persistency1}
\end{equation}

\end{Proposition}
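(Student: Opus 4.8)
The plan is to work entirely in the Littlewood--Paley picture, using that $\norm{f}{\Bes{s}{q}{\infty}}\sim\sup_{j}2^{js}\norm{\Delta_j f}{L^q}=:A$, and to produce the splitting by fixing a single dyadic frequency threshold $N=2^J$ (tied to $\epsilon$) that separates a low- and a high-frequency régime. Since $p_1<q<p_2$ straddle the data integrability, the two target points $(s_1,\tfrac1{p_1})$ and $(s_2,\tfrac1{p_2})$ lie on opposite sides of $(s,\tfrac1q)$ in the $(\sigma,\tfrac1p)$--plane, and the geometry of $D$ (through the lines $l_1,l_2,l_4$ and the auxiliary values $\bar s,\tilde s$) is exactly what makes both target spaces reachable. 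Each target regularity differs from $s$, and this mismatch will be paid for by powers of $N$ coming from summing the relevant geometric series; the two powers in the statement are the bookkeeping of these sums after reparametrising the cutoff scale.

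The higher-integrability piece $f^{2,\epsilon}\in\Bes{s_2}{p_2}{1}$ is the tractable one. On each block I would raise integrability from $q$ up to $p_2$ by Bernstein/Nikolskii (valid because $q<p_2$), at cost $2^{jd(1/q-1/p_2)}$, so that $2^{js_2}\norm{\Delta_j f}{L^{p_2}}\lesssim 2^{j(\tilde s-s)}A$ with $\tilde s=s_2+d(\tfrac1q-\tfrac1{p_2})$. The hypothesis $(s_2,\tfrac1{p_2})\in D$ forces $\tilde s>s$ through the scaling line $l_2$, so assigning $f^{2,\epsilon}$ the appropriate frequency range turns $\sum 2^{j(\tilde s-s)}$ into a convergent geometric series dominated by the top scale, of size $\sim N^{\tilde s-s}$; equivalently one first controls $\norm{P f}{\Bes{\tilde s}{q}{1}}$ and then applies the embedding $\Bes{\tilde s}{q}{1}\hookrightarrow\Bes{s_2}{p_2}{1}$ from Proposition \ref{Besov embeddings}. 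Reparametrising $N$ then yields the stated exponent $(1-\tfrac{q}{p_2})\tfrac{\tilde s-s}{\tilde s-\bar s}$.

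The main obstacle is the lower-integrability piece $f^{1,\epsilon}\in\Bes{s_1}{p_1}{1}$ with $p_1<q$. Here Bernstein/Nikolskii only \emph{raises} integrability, so no frequency localisation of $f$ places its individual blocks in $L^{p_1}$: membership in $L^{p_1}$ for $p_1<q$ is a spatial-decay requirement that is simply not implied by $\Bes{s}{q}{\infty}$ frequency data. Consequently the construction cannot be a naive frequency split, and the configuration of $D$ is designed precisely to circumvent this. The couple $(\Bes{s_1}{p_1}{1},\Bes{s_2}{p_2}{1})$ has its real-interpolation scale crossing integrability level $\tfrac1q$ at regularity $\bar s$ (the intersection of $l_4$ with the horizontal line $\tfrac1p=\tfrac1q$), which is why $\bar s$, and not $s$, appears. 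I would realise $f=f^{1,\epsilon}+f^{2,\epsilon}$ as a frequency-localised near-optimal $K$-functional decomposition for this couple, the localisation at scale $N$ absorbing the regularity mismatch $s-\bar s$; combining the interpolation inequality of Theorem \ref{Besovinterpolation} with the geometric sums produces the composite exponent $-\big((1-\tfrac{q}{p_2})\tfrac{s-\bar s}{\tilde s-\bar s}+\tfrac{q}{p_1}-1\big)$. This step is the crux and demands the most care, since it is where the low-integrability constraint is genuinely resolved rather than evaded.

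Finally, the persistency bounds \eqref{persistency1} should fall out cheaply: each piece is obtained from $f$ by frequency-localisation operators that are uniformly bounded on $\Bes{s}{q}{\infty}$, so passing from $f$ to $f^{1,\epsilon}$ or $f^{2,\epsilon}$ changes $\sup_j 2^{js}\norm{\Delta_j(\cdot)}{L^q}$ by at most a fixed factor. Continuous dependence of $C$ on the parameters is then a matter of tracking the Bernstein constants, the geometric-ratio factors (which degenerate only as $\tilde s\to s$, $\tilde s\to\bar s$, or as $l_1$ and $l_2$ coincide, all excluded by hypothesis), and the interpolation constant from Theorem \ref{Besovinterpolation}.
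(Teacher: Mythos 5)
The paper does not actually prove this proposition itself --- it refers to Proposition 5.6 of \cite{AlbrBarker} and to Figure \ref{splitting diagram} --- so your proposal is being judged on its own terms. Your treatment of $f^{2,\epsilon}$ is essentially right: cut at a dyadic scale tied to $N$, use Bernstein on each block to pass from $L^q$ to $L^{p_2}$, sum the geometric series (convergent because $(s_2,\frac{1}{p_2})\in D$ forces $\tilde{s}>s$), and conclude via $\Bes{\tilde{s}}{q}{1}\hookrightarrow\Bes{s_2}{p_2}{1}$. The gap is exactly where you say ``this step is the crux'': the construction of $f^{1,\epsilon}$. You correctly diagnose that no frequency localisation can lower the integrability index, but your proposed remedy --- ``realise $f=f^{1,\epsilon}+f^{2,\epsilon}$ as a frequency-localised near-optimal $K$-functional decomposition for the couple'' --- is a restatement of the conclusion rather than an argument: invoking a near-optimal $K$-functional decomposition presupposes that $f$ (or its high-frequency tail, which a priori lies only in $\Bes{\bar{s}}{q}{1}$) belongs to the sum space $\Bes{s_1}{p_1}{1}+\Bes{s_2}{p_2}{1}$ with the claimed $K$-functional bound, and that is precisely what must be proved. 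Theorem \ref{Besovinterpolation}, which you cite to produce the composite exponent, interpolates only in the regularity index at \emph{fixed} $p$, so it cannot supply the passage from integrability $q$ down to $p_1$.

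The missing idea is a second, \emph{amplitude-based} cut in the spirit of Calder\'on's Lebesgue splitting quoted just before the proposition, performed blockwise on the high-frequency tail: each block $\dot{\Delta}_j f\in L^q$ is truncated at a $j$-dependent height into a large-amplitude part lying in $L^{p_1}$ (by Chebyshev, since $p_1<q$) and a small-amplitude remainder lying in $L^{p_2}$ (since $p_2>q$). This is the concrete realisation of the interpolation decomposition along the line $l_4$, and it is why $\bar{s}$ (the intersection of $l_4$ with the level $\frac{1}{q}$) and the Calder\'on-type exponents $\frac{q}{p_1}-1$ and $1-\frac{q}{p_2}$ appear in the bounds. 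Two consequences follow that your sketch does not accommodate: first, $f^{2,\epsilon}$ is \emph{not} merely a low-frequency truncation --- it must also absorb the small-amplitude remainders from every high-frequency block; second, the resulting pieces are no longer spectrally supported in annuli, so placing them in $\Bes{s_1}{p_1}{1}$, and verifying the persistency bounds (\ref{persistency1}), requires the standard lemma on non-localised dyadic decompositions (with block bounds in two Lebesgue exponents so that the sums over $j<k$ and $j>k$ both converge) rather than the ``cheap boundedness of frequency projectors'' you appeal to. As written, the proposal identifies the obstruction but does not overcome it.
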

\ \\
See Figure \ref{splitting diagram}. For a proof of this result, see Proposition 5.6 of \cite{AlbrBarker}. 
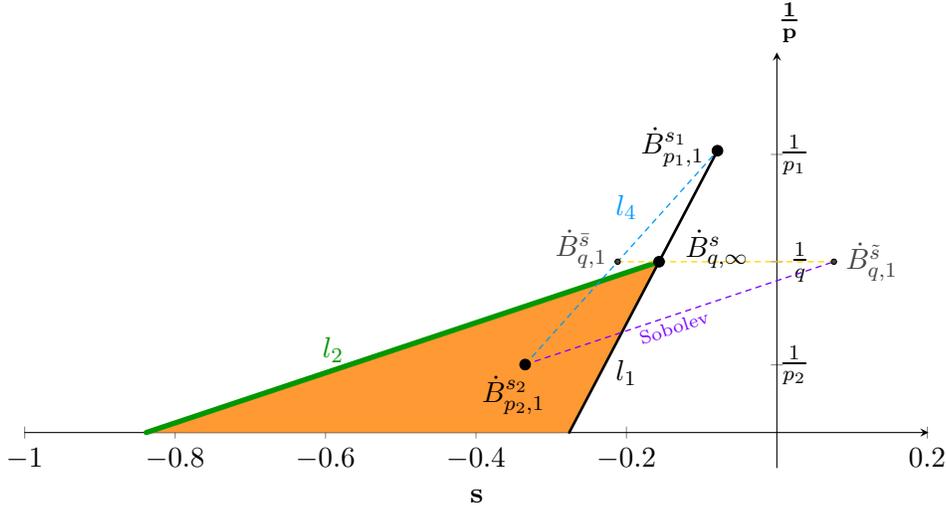
\begin{figure}[h] 
\centering
\begin{tikzpicture}[line cap=round,line join=round,>=triangle 45,x=10cm,y=10cm]
\begin{axis}[
x=10cm,y=10cm,
axis lines=middle,
xmin=-1,
xmax=0.20167597765363113,
ymin=-0.04676364650460901,
ymax=0.5058,
xtick={-1,-0.8,...,0.20000000000000004},
ytick={0.37,1/4.4,0.09},
yticklabels = {$\frac{1}{p_1}$,$\frac{1}{q}$,$\frac{1}{p_2}$},
xlabel near ticks,
ylabel near ticks,
xlabel={$\mathbf{s}$},
ylabel={$\mathbf{\frac{1}{p}}$},
ylabel style={rotate=-90, xshift=15pt, yshift=90pt},
yticklabel style={
    xshift=18pt
}
]
\clip(-1,-0.04676364650460901) rectangle (0.20167597765363113,0.5058);
\fill[line width=0pt,color=ffzztt,fill=ffzztt,fill opacity=0.34] (-0.15660337709620767,0.22727272727272727) -- (-0.8384215589143895,0) -- (-0.27606379124529246,0) -- cycle;
\draw [line width=2pt,color=qqwuqq] (-0.15660337709620767,0.22727272727272727)-- (-0.8384215589143895,0);
\draw [line width=0.5pt,dash pattern=on 2pt off 2pt,color=qqzzff] (-0.33463687150837995,0.09044514021178506)-- (-0.07896788898040852,0.3749737815751768);
\draw [line width=1pt] (-0.27606379124529246,0)-- (-0.07896788898040852,0.3749737815751768);
\draw [line width=0.5pt,dash pattern=on 2pt off 2pt,color=xfqqff] (-0.33463687150837995,0.09044514021178506)-- (0.07584588967444664,0.22727272727272724);
\draw [line width=0.5pt,dash pattern=on 2pt off 2pt,color=ffdxqq] (-0.21168766752666274,0.22727272727272724)-- (0.07584588967444664,0.22727272727272724);
\begin{scriptsize}
\draw [fill=black] (-0.07896788898040852,0.3749737815751768) circle (2pt);
\draw[color=black] (-0.13854748603351966,0.38014956216460644) node {$\Bes{s_1}{p_1}{1}$};
\draw [fill=black] (-0.33463687150837995,0.09044514021178506) circle (2pt);
\draw[color=black] (-0.35,0.05) node {$\Bes{s_2}{p_2}{1}$};
\draw [fill=black] (-0.15660337709620767,0.22727272727272727) circle (2pt);
\draw[color=black] (-0.07988826815642469,0.24443217530382524) node {$\Bes{s}{q}{\infty}$};
\draw[color=qqwuqq] (-0.59,0.11) node {$l_2$};
\draw[color=qqzzff] (-0.20,0.3) node {$l_4$};
\draw[color=ffzztt] (-0.5,0.0460759945072989) node {$D$};
\draw [fill=uuuuuu] (-0.21168766752666274,0.22727272727272724) circle (1pt);
\draw[color=uuuuuu] (-0.26,0.24443217530382524) node {$\Bes{\bar{s}}{q}{1}$};
\draw [fill=uuuuuu] (0.07584588967444664,0.22727272727272724) circle (1pt);
\draw[color=uuuuuu] (0.12625698324022333,0.22727272727272724) node {$\Bes{\tilde{s}}{q}{1}$};
\draw[color=black] (-0.2,0.08) node {$l_1$};
\draw[color=xfqqff] (-0.13687150837988837,0.14) node [rotate=18.43] {\tiny Sobolev};
\end{scriptsize}
\end{axis}
\end{tikzpicture}
\caption{Splitting diagram for Proposition \ref{genBesovsplit} \textit{This illustrates the splitting proof from \cite{AlbrBarker}. }}
\label{splitting diagram}
\end{figure}
\\ We apply this to split our initial data into an $L^2$ piece and a subcritical Besov piece, analogously to Calderón's splitting:
\pagebreak
\begin{Corollary} \label{usefulBesovsplit}
Let $2<q<\infty$, $-1+\frac{2}{q}<s<0$.\footnote{For the significance of this condition, see Figure \ref{supercritical splitting diagram}.} Then there exists $p_{q,s}>q$, $C_{q,s}>0$, $0<\delta<1-\frac{3}{p}$ and $\gamma_1,\gamma_2>0$ all depending on $q$ and $s$ such that for all (weakly) divergence-free $u_0 \in \Bes{s}{q}{\infty}(\R^3)$ and $\epsilon>0$, there exist (weakly) divergence-free functions $m_0^\epsilon \in \Bes{-1+\frac{3}{p}+\delta}{p}{p}(\R^3)\cap \Bes{s}{q}{\infty}(\R^3)$ and $w_0^\epsilon \in L^2(\R^3)\cap \Bes{s}{q}{\infty}(\R^3)$ such that:
\begin{align}
u_0 &= m_0^\epsilon + w_0^\epsilon,
\\ \norm{m_0^\epsilon}{\Bes{-1+\frac{3}{p}+\delta}{p}{p}} &\leq C_{q,s} \epsilon^{\gamma_1}\norm{u_0}{\Bes{s}{q}{\infty}},
\\ \norm{w_0^\epsilon}{L^2} &\leq C_{q,s} \epsilon^{-\gamma_2}\norm{u_0}{\Bes{s}{q}{\infty}},
\\ \norm{m_0^\epsilon}{\Bes{s}{q}{\infty}},\norm{w_0^\epsilon}{\Bes{s}{q}{\infty}} &\leq C_{q,s}\norm{u_0}{\Bes{s}{q}{\infty}}.
\end{align}
\end{Corollary}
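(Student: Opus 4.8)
The plan is to deduce the result directly from the general splitting Proposition \ref{genBesovsplit} by a judicious choice of its free parameters, followed by two cheap Besov embeddings and an application of the Leray projector. Throughout I take $d=3$. The first target space is forced to be an $L^2$-space: choosing $p_1=2$ and $s_1=0$, the piece $f^{1,\epsilon}$ produced by Proposition \ref{genBesovsplit} lies in $\Bes{0}{2}{1}$, and since $\Bes{0}{2}{1}\hookrightarrow \Bes{0}{2}{2}=L^2$ this will be $w_0^\epsilon$. For the second target space I choose $p_2=p$ (to be fixed) together with $s_2 = -1+\frac{3}{p}+\delta$ for a small $\delta>0$, so that $(s_2,\tfrac1p)$ is \emph{subcritical} for the target family; then $f^{2,\epsilon}\in \Bes{s_2}{p}{1}\hookrightarrow \Bes{s_2}{p}{p}=\Bes{-1+\frac3p+\delta}{p}{p}$, which will be $m_0^\epsilon$. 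The admissibility conditions $p_1<q<p_2$ reduce to $2<q$ (given) and $p>q$.

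The main obstacle is to show that $p>\max(q,3)$ and $\delta>0$ can be chosen so that $(s_2,\tfrac1p)$ lies in the region $D$ of Proposition \ref{genBesovsplit}, and this is precisely where the hypothesis $s>-1+\frac{2}{q}$ enters. Here $D$ is the open triangle with apex $(s,\tfrac1q)$ bounded by $l_2$ (slope $\tfrac13$ through $(s,\tfrac1q)$), by $l_1$ (through $(s,\tfrac1q)$ and $(0,\tfrac12)$), and by the $s$-axis $l_3$. The criticality line for the target family, namely $\tfrac1p=\tfrac{\sigma+1}{3}$ through $(-1,0)$, has slope $\tfrac13$ and is therefore parallel to $l_2$; a line parallel to one side of a triangle meets its interior iff its axis-intercept lies strictly between the axis-intercepts of the other two sides. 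A direct computation gives that $l_2$ and $l_1$ meet the $s$-axis at $s-\tfrac3q$ and at $\tfrac{sq}{q-2}$ respectively, so the criticality line (intercept $-1$) crosses the interior of $D$ iff
\begin{equation}
s-\tfrac{3}{q}<-1<\tfrac{sq}{q-2}. \nonumber
\end{equation}
The right-hand inequality is equivalent to $s>-1+\tfrac2q$, which is exactly our hypothesis, while the left-hand one holds in the supercritical range $s<-1+\tfrac3q$ (the complementary range is handled directly, the apex itself then being subcritical). Since the criticality line enters $D$ through the $s$-axis (where $\tfrac1p\to0$), every sufficiently large $p$ is admissible; fixing any such $p>\max(q,3)$ and then taking $\delta>0$ small enough keeps $(s_2,\tfrac1p)$ inside the open set $D$ on the subcritical side.

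With the parameters fixed, the quantitative bounds are read off from Proposition \ref{genBesovsplit} by setting $N=\epsilon$. A short computation gives $\tilde s = s_2+\tfrac3q-\tfrac3p = -1+\tfrac3q+\delta$, and since $(s_2,\tfrac1p)$ lies strictly below $l_1$ one checks the ordering $\bar s<s<\tilde s$; consequently $\tilde s-s$, $\tilde s-\bar s$ and $s-\bar s$ are all positive. The exponent in the $f^{2,\epsilon}$ bound is therefore $\gamma_1:=(1-\tfrac q p)\tfrac{\tilde s-s}{\tilde s-\bar s}>0$, and the exponent in the $f^{1,\epsilon}$ bound is $-\gamma_2$ with $\gamma_2:=(1-\tfrac q p)\tfrac{s-\bar s}{\tilde s-\bar s}+\tfrac q2-1>0$ (here $\tfrac q2-1>0$ since $q>2$). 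This yields the claimed $\epsilon^{\gamma_1}$ and $\epsilon^{-\gamma_2}$ estimates, while the persistency estimate (\ref{persistency1}) gives the $\Bes{s}{q}{\infty}$ bounds on both pieces immediately.

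Finally, Proposition \ref{genBesovsplit} does not itself guarantee that the pieces are divergence-free, so I apply the Leray projector $\LP$ to each: replacing $f^{1,\epsilon},f^{2,\epsilon}$ by $\LP f^{1,\epsilon},\LP f^{2,\epsilon}$ produces divergence-free functions whose sum is $\LP u_0=u_0$, and since $\LP$ is bounded on every homogeneous Besov space appearing above all four estimates survive up to constants absorbed into $C_{q,s}$. I expect the geometric placement of $(s_2,\tfrac1p)$ in $D$ — equivalently, the extraction of the threshold $s>-1+\tfrac2q$ — to be the only genuinely delicate point; the remaining steps are bookkeeping with the embeddings and with the explicit exponents of Proposition \ref{genBesovsplit}.
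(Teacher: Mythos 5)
Your proposal is correct and follows essentially the same route as the paper: both feed $p_1=2$, $s_1=0$ into Proposition \ref{genBesovsplit}, locate a subcritical point $(s_2,\tfrac{1}{p})=(-1+\tfrac{3}{p}+\delta,\tfrac{1}{p})$ inside $D$ (your intercept computation $s-\tfrac{3}{q}<-1<\tfrac{sq}{q-2}$ is exactly the geometric content of the paper's two cases and correctly isolates the role of $s>-1+\tfrac{2}{q}$), and then apply the embeddings, the persistency bound (\ref{persistency1}) and the Leray projector. The only substantive difference is that the paper fixes explicit parameter values ($p=2q$ in the subcritical/critical case, $p=3P$ in the supercritical case, with closed-form $\delta$, $\bar{s}$, $\tilde{s}$) because downstream results such as the identity $\gamma_2/\gamma_1=2(q-2)$ and the exponent in Theorem \ref{Type1 singular set} depend on them, whereas your non-explicit choice of large $p$ and small $\delta$ suffices for the Corollary as stated.
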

\begin{proof}
We choose the parameters of the previous Proposition \ref{genBesovsplit} carefully. Set $p_1=2$ and $s_1 = 0$. We will choose $\left(s_2,\frac{1}{p}\right)$ inside the region $D$. Let us first define some important lines:
\begin{equation}
l_{crit} = \left\lbrace\left(-1+\frac{3}{\alpha},\frac{1}{\alpha}\right) : \alpha\geq 3 \right\rbrace, \qquad  l_{bndry} = \left\lbrace\left(-1+\frac{2}{\alpha},\frac{1}{\alpha}\right) : \alpha\geq 2 \right\rbrace,
\end{equation}
\begin{equation} \nonumber
l_{1} = \left\lbrace\left(x,\frac{1}{2}+\frac{\left(-1+\frac{2}{q}\right)x}{2s}\right) : x\in \R \right\rbrace, \qquad l_2 = \left\lbrace\left(x,\frac{1}{q}-\frac{s}{3}+\frac{x}{3}\right) : x\in \R \right\rbrace .
\end{equation}
\ \\
We now consider two cases:
\\
\textbf{Case 1: subcritical or critical:} $q>3,-1+\frac{3}{q}\leq s<0$. See Figure \ref{subcritical splitting diagram}:
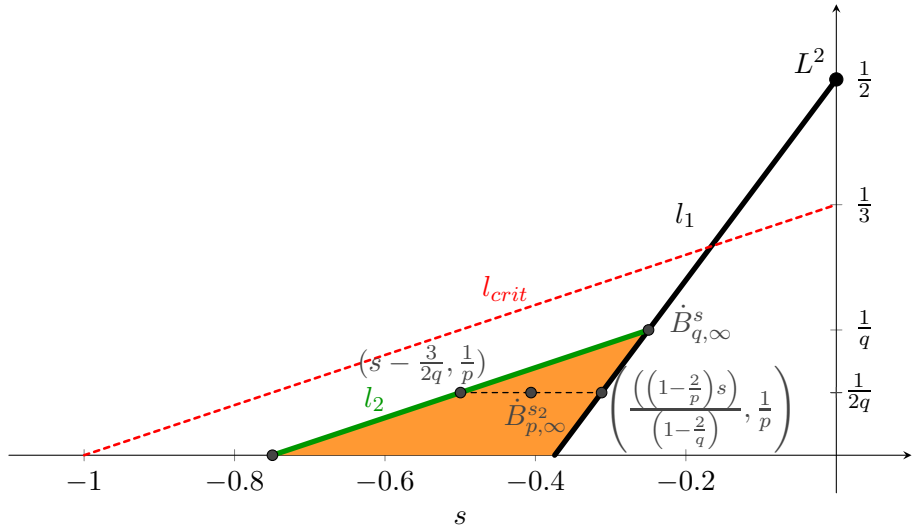
\begin{figure}[ht] 
\centering
\begin{tikzpicture}[line cap=round,line join=round,>=triangle 45,x=10cm,y=10cm]
\begin{axis}[
x=10cm,y=10cm,
axis lines=middle,
xmin=-1.1,
xmax=0.1,
ymin=-0.05,
ymax=0.6,
xtick={-1,-0.8,...,0},
ytick={0,1/3,1/2,1/6,1/12},
ylabel near ticks,
yticklabels = {$0$,$\frac{1}{3}$,$\frac{1}{2}$,$\frac{1}{q}$,$\frac{1}{2q}$},
xlabel near ticks,
xlabel={$s$},
yticklabel style={
    xshift=20pt
}]
\clip(-1.1,-0.2) rectangle (0.2,0.6);
\fill[line width=0pt,color=ffzztt,fill=ffzztt,fill opacity=0.38] (-0.74962962962963,0) -- (-0.37444444444444497,0) -- (-0.24962962962963,0.16666666666666666) -- cycle;
\draw [line width=2pt] (-0.37444444444444497,0)-- (0,0.5);
\draw [line width=2pt,color=qqwuqq] (-0.74962962962963,0)-- (-0.24962962962963,0.16666666666666666);
\draw [line width=1pt,dash pattern=on 2pt off 2pt,color=ffqqqq] (-1,0)-- (0,0.3333333333333333);
\draw [line width=0.5pt,dash pattern=on 2pt off 2pt] (-0.49962962962963,0.08333333333333333)-- (-0.3120370370370375,0.08333333333333333);
\begin{scriptsize}
\draw [fill=uuuuuu] (-0.24962962962963,0.16666666666666666) circle (2pt);
\draw[color=uuuuuu] (-0.18,0.17443181818181816) node {$\Bes{s}{q}{\infty}$};
\draw [fill=uuuuuu] (-0.49962962962963,0.08333333333333333) circle (2pt);
\draw[color=uuuuuu] (-0.55,0.11988636363636362) node {$(s - \frac{3}{2q}, \frac{1}{p})$};
\draw [fill=uuuuuu] (-0.3120370370370375,0.08333333333333333) circle (2pt);
\draw[color=uuuuuu] (-0.18,0.06079545454545454) node {$\left(\frac{\left(\left(1 - \frac{2}{p}\right) s\right)}{\left(1 - \frac{2}{q}\right)}, \frac{1}{p}\right)$};
\draw [fill=uuuuuu] (-0.40583333333333377,0.08333333333333333) circle (2pt);
\draw[color=uuuuuu] (-0.4,0.05) node {$\Bes{s_2}{p}{\infty}$};
\draw [fill=uuuuuu] (-0.74962962962963,0) circle (2pt);
\draw[color=ffzztt] (-0.5,0.03) node {$D$};
\draw [fill=black] (0,0.5) circle (2.5pt);
\draw[color=black] (-0.035,0.525) node {$L^2$};
\draw[color=black] (-0.2,0.32) node {$l_1$};
\draw[color=qqwuqq] (-0.6134078212290504,0.075) node {$l_2$};
\draw[color=ffqqqq] (-0.44,0.2221590909090909) node {$l_{crit}$};
\draw [color=uuuuuu] (0,0.08333333333333333)-- ++(-2pt,0 pt) -- ++(4pt,0 pt) ++(-2pt,-2pt) -- ++(0 pt,4pt);

\end{scriptsize}
\end{axis}
\end{tikzpicture}
\caption{Subcritical/critical splitting diagram, corresponding to Proposition \ref{genBesovsplit}/Figure \ref{splitting diagram}. \textit{Observe that the highlighted orange region $D$ is contained fully in the subcritical region under the red line $l_{crit}$. This means that any point in $D$ would be viable for our purposes. We choose $p=2q$ and bisect between $l_1$ and $l_2$ for simplicity.}}
\label{subcritical splitting diagram}
\end{figure}

We may take $p=2q$. We see that $\left(\frac{s(q-1)}{\left(q-2\right)} ,\frac{1}{2q}\right) \in l_1$ and $\left(s-\frac{3}{2q} ,\frac{1}{2q}\right) \in l_2$. 
Thus, a suitable choice for $s_2$ is $s_2:=\frac{\frac{s(q-1)}{\left(q-2\right)}+s-\frac{3}{2q}}{2}=-\frac{3}{4q}+\frac{s}{2}\left(\frac{2q-3}{q-2} \right)$. 
To obtain the splitting we desire, $\delta$ is defined by the relation $-1+\frac{3}{2q}+\delta=\frac{\frac{s(q-1)}{\left(q-2\right)}+s-\frac{3}{2q}}{2}=-\frac{3}{4q}+\frac{s}{2}\left(\frac{2q-3}{q-2} \right)$. Hence, we have
\begin{equation}
\delta = 1-\frac{9}{4q}+ \frac{s(2q-3)}{2(q-2)}. \label{deltavaluesub}
\end{equation}
By construction, it is clear that $\delta \in (0,1-\frac{3}{2q})$.
Finally, we set \begin{equation}
\bar{s}= \frac{\frac{1}{2}-\frac{1}{q}}{\frac{1}{2}-\frac{1}{2q}}\left(\frac{s(2q-3)}{2(q-2)}-\frac{3}{4q}\right) = \frac{1}{2(q-1)}\left(s(2q-3) - \frac{3(q-2)}{2q} \right)
\end{equation}
and
\begin{equation}
 \tilde{s} = s_2+\frac{3}{q}-\frac{3}{p}= \frac{3}{4q}+\frac{s(2q-3)}{2(q-2)}.
\end{equation} 
\ \\
\textbf{Case 2: supercritical:} $-1+\frac{2}{q}<s<\min\left(0,-1+\frac{3}{q}\right)$. 
See Figure \ref{supercritical splitting diagram}: 
\begin{figure}[ht] 
\centering
\begin{tikzpicture}[line cap=round,line join=round,>=triangle 45,x=9cm,y=9cm]

\begin{axis}[
x=9cm,y=9cm,
axis lines=middle,
xmin=-1.3534217877094972,
xmax=0.09657821229050288,
ymin=-0.07,
ymax=0.58,
xtick={-1.2000000000000002,-1.0000000000000002,...,0},
ytick={0,1/3,1/2,1/2.51,0.07342401638145372,0.21277918761324555},
ylabel near ticks,
yticklabels = {$0$,$\frac{1}{3}$,$\frac{1}{2}$,$\frac{1}{q}$,$\frac{1}{3P}$,$\frac{1}{P}$},
xlabel near ticks,
xlabel={$s$},
yticklabel style={
    xshift=22pt
}]

\fill[line width=0pt,color=ffzztt,fill=ffzztt,fill opacity=0.38] (-1.318406374501992,0) -- (-0.6062745098039221,0) -- (-0.123187250996016,0.39840637450199207) -- cycle;
\draw [line width=2pt] (-0.6062745098039221,0)-- (0,0.5);
\draw [line width=2pt,color=qqwuqq] (-1.318406374501992,0)-- (-0.123187250996016,0.39840637450199207);
\draw [line width=1pt,dash pattern=on 2pt off 2pt,color=ffqqqq] (-1,0)-- (0,0.3333333333333333);
\draw [line width=1pt,dash pattern=on 2pt off 2pt,color=qqqqff] (-1,0)-- (0,0.5);
\draw [line width=0.5pt,dash pattern=on 2pt off 2pt] (-0.7797279508556388,0.07342401638145372)-- (-0.5172442907249201,0.07342401638145372);
\begin{scriptsize}
\draw [fill=uuuuuu] (-0.123187250996016,0.39840637450199207) circle (2pt);
\draw[color=uuuuuu] (-0.05328212290502783,0.3847859621884539) node {$\Bes{s}{q}{\infty}$};
\draw [fill=uuuuuu] (-0.3391838525669165,0.2202720491443612) circle (2pt);
\draw[color=uuuuuu] (-0.17884078212290494,0.21277918761324555) node {$\left(-1 + \frac{3}{P}, \frac{1}{P}\right)$};
\draw [fill=uuuuuu] (-0.7797279508556388,0.07342401638145372) circle (2pt);
\draw [fill=uuuuuu] (-0.5172442907249201,0.07342401638145372) circle (2pt);
\draw[color=uuuuuu] (-0.35,0.056243921862262236) node {$\left(\frac{\left(\left(1 - \frac{2}{p}\right) s\right)}{\left(1 - \frac{2}{q}\right)}, \frac{1}{p}\right)$};
\draw [fill=uuuuuu] (-0.6484861207902795,0.07342401638145372) circle (2pt);
\draw[color=uuuuuu] (-0.6405726256983238,0.04) node {$\Bes{s_2}{p}{\infty}$};
\draw[color=ffzztt] (-1.1,0.035) node {$D$};
\draw [fill=black] (0,0.5) circle (1pt);
\draw[color=black] (-0.04,0.53) node {$L^2$};
\draw[color=black] (-0.18491620111731835,0.3110687730847932) node {$l_1$};
\draw[color=qqwuqq] (-0.7985335195530725,0.2045883888239499) node {$l_2$};
\draw[color=ffqqqq] (-0.4370460893854748,0.225) node {$l_{crit}$};
\draw[color=qqqqff] (-0.4370460893854748,0.35) node {$l_{bndry}$};
\draw [color=uuuuuu] (0,0.07342401638145372)-- ++(-2pt,0 pt) -- ++(4pt,0 pt) ++(-2pt,-2pt) -- ++(0 pt,4pt);
\end{scriptsize}
\end{axis}
\end{tikzpicture}
\caption{Supercritical splitting diagram, corresponding to Proposition \ref{genBesovsplit}/Figure \ref{splitting diagram}. \textit{Notice here that the highlighted orange region $D$ is not fully contained in the subcritical region under the red line $l_{crit}$. This means that we must take care to choose a point in $D$ that corresponds to a subcritical Besov space. 
\\ We can see the significance of the condition $-1+\frac{2}{q}<s<0$, which corresponds to the region below the blue line $l_{bndry}$; if we have our initial data in a Besov space corresponding to a point above $l_{bndry}$, then it is clear that $l_1$ (the line defined through $\frac{1}{2}$ (corresponding to $L^2$) and $(s,q)$ (corresponding to $\Bes{s}{q}{\infty}$)) does not interest with $l_{crit}$, and hence the region $D$ would not contain any subcritical points.} } 
\label{supercritical splitting diagram}
\end{figure}
\\We must take more care choosing a point inside $D$ that corresponds to a subcritical Besov space. We observe that $l_1$ and $l_\text{crit}$ intersect when $P:=\frac{2s+3-\frac{6}{q}}{s+1-\frac{2}{q}}$. Accordingly, we choose \begin{equation}
p=3P=\frac{3\left(2s+3-\frac{6}{q}\right)}{s+1-\frac{2}{q}}. \label{pvaluesuper}
\end{equation}
We see that $\left( \frac{\left( 1-\frac{2}{p} \right)s}{1-\frac{2}{q}} ,\frac{1}{p}\right) \in l_1$ and $\left(-1+\frac{3}{p} ,\frac{1}{p}\right) \in l_\text{crit}$.
Thus, a suitable choice for $s_2$ is 
\begin{equation}
s_2:= \left( 1-\frac{2}{p} \right)\left(1-\frac{2}{q}\right)^{-1}\frac{s}{2} - \frac{1}{2}\left(1-\frac{3}{p}\right) 
\end{equation}
To obtain the splitting we desire, $\delta$ is defined by the relation $s_2=-1+\frac{3}{p}+\delta$.
Hence, we have
\begin{equation}
\delta := \frac{1}{2}\left(1-\frac{3}{p}\right) + \left( 1-\frac{2}{p} \right)\left(1-\frac{2}{q}\right)^{-1}\frac{s}{2} . \label{deltavaluesuper}
\end{equation}
By construction, it is clear that $\delta \in \left(0,1-\frac{3}{p}\right)$.
Finally, we set \begin{equation}
\bar{s}= \frac{\frac{1}{q}-\frac{1}{2}}{\frac{1}{p}-\frac{1}{2}}\left(-1+\frac{3}{p}+\delta\right)
\quad \text{and} \quad
 \tilde{s} = s_2+\frac{3}{q}-\frac{3}{p}= -1+\frac{3}{q}+\delta. 
\end{equation}
\\  
In both cases, we may then apply Proposition \ref{genBesovsplit}: there exists $C_{q,s},\gamma_1,\gamma_2>0$ depending on $q$ and $s$ such that for any $u_0 \in \Bes{s}{q}{\infty}$ and $\epsilon>0$, there exist functions $u_0^{1,\epsilon}$ and $u_0^{2,\epsilon}$, such that
\begin{equation}
u_0=u_0^{1,\epsilon} +u_0^{2,\epsilon} ,
\end{equation}
\begin{equation}
\norm{u_0^{1,\epsilon}}{L^2} = \norm{u_0^{1,\epsilon}}{\Bes{0}{2}{2}} \leq  C\norm{u_0^{1,\epsilon}}{\Bes{0}{2}{1}} \leq C_{q,s}\epsilon^{-\gamma_2}\norm{u_0}{\Bes{s}{q}{\infty}},
\end{equation}
\begin{equation}
\norm{u_0^{2,\epsilon}}{\Bes{-1+\frac{3}{p}+\delta}{p}{p}} \leq C_{q,s}\norm{u_0^{2,\epsilon}}{\Bes{-1+\frac{3}{p}+\delta}{p}{1}} \leq C_{q,s}\epsilon^{\gamma_1}\norm{u_0}{\Bes{s}{q}{\infty}},
\end{equation}
\begin{equation}
\norm{u_0^{1,\epsilon}}{\Bes{s}{q}{\infty}},\norm{u_0^{2,\epsilon}}{\Bes{s}{q}{\infty}} \leq C_{q,s}\norm{u_0}{\Bes{s}{q}{\infty}}.
\end{equation}
\\ We may then apply the Leray projector and fact that the Leray projector is a bounded linear operator for these homogeneous Besov spaces (see Proposition 2.30 of \cite{Bahouri}) to obtain $m_0^\epsilon$ from $u_0^{2,\epsilon}$ and $w_0^\epsilon$ from $u_0^{1,\epsilon}$ for the desired splitting, where we set $\gamma_1,\gamma_2>0$:
\begin{align}
\gamma_1 = \frac{\tilde{s}-s}{(\tilde{s}-\bar{s})}\left(\frac{p-q}{p}\right),
\qquad \gamma_2 = \frac{s-\bar{s}}{(\tilde{s}-\bar{s})}\left(\frac{p-q}{p}\right)+\frac{q}{2}-1.
\end{align}
\end{proof}
\begin{Remark}
    In the subcritical/critical case of Corollary \ref{usefulBesovsplit}, the following expression is useful for some applications (in particular, for Theorem \ref{sequential scaleinvbnd}):  
\begin{align}\label{Specificratiosub}
\frac{\gamma_2}{\gamma_1} = 2(q-2)
\end{align}
To obtain this formula, we see that
\begin{align}
1+\frac{\gamma_2}{\gamma_1} &= \frac{\tilde{s}-s+s-\bar{s}+\frac{p}{p-q}\left( \frac{q}{2}-1 \right)(\tilde{s}-\bar{s})}{\tilde{s}-s} = \frac{\tilde{s}-\bar{s}}{\tilde{s}-s}\left( \frac{q(p-2)}{2(p-q)} \right)
\end{align}
and
\begin{align}
\frac{\tilde{s}-\bar{s}}{\tilde{s}-s} = \frac{\frac{3}{4q}+\frac{s(2q-3)}{2(q-2)}-\frac{1}{2(q-1)}\left(s(2q-3) - \frac{3(q-2)}{2q} \right)}{\frac{3}{4q}+\frac{s(2q-3)}{2(q-2)}-s} = \frac{2q-3}{q-1}.
\end{align}
Then using that $p=2q$ in the subcritical/critical case of Corollary \ref{usefulBesovsplit}, we obtain the formula for (\ref{Specificratiosub}) for $\frac{\gamma_2}{\gamma_1}$.
\end{Remark}

\section{Global-in-time existence of weak solutions corresponding to supercritical Besov space initial data}
In this section, we prove Theorem \ref{Global solution supercritical Besov data}. In fact, we will prove:
\setcounter{MainTheorem}{1}
\begin{Theorem}
     Let $q>2$ and $-1+\frac{2}{q}<s<\min \left(-1+\frac{3}{q},0 \right).$ Suppose that $u_0 \in \Bes{s}{q}{\infty}(\R^3)$ is divergence-free. Then there exist constants $\max(q,3)< p<\infty$ and $0<\delta<1-\frac{3}{p}$ (both depending on $q$ and $s$) and a suitable\footnote{As in Definition \ref{suitablesolngen}.} weak solution $(u,\pi)$ to the Navier-Stokes equations on $\R^3\times(0,\infty)$ which may be decomposed as $$u=m+w \in K^\infty(\infty)\cap K^p(\infty)\cap K^{p,\delta}(\infty) + L^\infty_tL^2_x\cap L^2_t\dot{H}^1_x(\R^3\times(0,\infty)),$$ corresponding to an initial data splitting $$u_0  = m_0 + w_0 \in \Bes{-1+\frac{3}{p}+\delta}{p}{p}\cap \Bes{s}{q}{\infty} + L^2\cap \Bes{s}{q}{\infty}(\R^3) $$   
     such that $m$ is a mild solution of (\ref{Navier-Stokes equations}) with initial data $m_0$ and $w$ satisfies: 
    \begin{enumerate}
        \item[(i)] $\lim\limits_{t\to0^+}\norm{w(\cdot,t)-w_0}{L^2}=0$.
\item[(ii)] for all $\phi \in C^\infty_c(\R^3),$ the map $t \mapsto \int_{\R^3}w(x,t)\cdot\phi(x)\;dx$ is continuous on $[0,\infty).$ 
\item[(iii)]w satisfies the (perturbed) energy inequality: for almost every $t_1 \in \left[0,\infty\right)$ (and including $t_1=0$) and for all $t_2 \in \left[ t_1, \infty\right)$, 
    \begin{equation}
        \norm{w(\cdot,t_2)}{L^2_x}^2 +2\int_{t_1}^{t_2} \norm{\nabla w(\cdot,s)}{L^2_x}^2ds \leq \norm{w(\cdot,t_1)}{L^2_x}^2 + 2\int_{t_1}^{t_2} \int_{\R^3} m\cdot(w\cdot\nabla)w\;dxds. 
    \end{equation}
\item[(iv)] $w$ is suitable in the sense of Definition \ref{suitablesolngen} for the $m$-perturbed Navier-Stokes equations on $\R^3\times (0,\infty).$
\end{enumerate}      
\end{Theorem}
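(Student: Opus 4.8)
The plan is to execute the Calderón-type splitting concretely, following the decomposition $u_0 = m_0^\epsilon + w_0^\epsilon$ furnished by Corollary \ref{usefulBesovsplit}, and then to build $m$ and $w$ separately before gluing them together. First I would fix $\epsilon>0$ so that the critical norm of $m_0^\epsilon$ is small. Concretely, since $m_0^\epsilon \in \Bes{-1+\frac{3}{p}+\delta}{p}{p} \cap \Bes{s}{q}{\infty}$, I would interpolate between the subcritical bound $\norm{m_0^\epsilon}{\Bes{-1+\frac{3}{p}+\delta}{p}{p}} \leq C_{q,s}\epsilon^{\gamma_1}\norm{u_0}{\Bes{s}{q}{\infty}}$ and the persistency bound $\norm{m_0^\epsilon}{\Bes{s}{q}{\infty}} \leq C_{q,s}\norm{u_0}{\Bes{s}{q}{\infty}}$ (using Proposition \ref{Besov embeddings} to land in the correct integrability) to produce a bound on the \emph{critical} Besov norm $\norm{m_0^\epsilon}{\Bes{-1+\frac{3}{p}}{p}{\infty}}$ that is $O(\epsilon^{\gamma})$ for some $\gamma>0$. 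Choosing $\epsilon$ small enough then forces this critical norm below the threshold $C^{(1)}_{p,0}$ of Theorem \ref{Besovmildsoln} (in its $\delta=0$, $T=\infty$ form from the Remark), yielding a global-in-time mild solution $m \in K^p(\infty) \cap K^\infty(\infty)$.

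Next I would propagate subcriticality to $m$. Because $m_0^\epsilon$ also lies in the subcritical space $\Bes{-1+\frac{3}{p}+\delta}{p}{p} \hookrightarrow \Bes{-1+\frac{3}{p}+\delta}{p}{\infty}$, the equivalence noted after Definition \ref{Katospace} gives $\norm{e^{t\Delta}m_0^\epsilon}{K^{p,\delta}(\infty)} < \infty$, and I would invoke Lemma \ref{contractionLemma}(ii) (the propagation-through-Picard-iterates statement the introduction advertises) to conclude the same mild solution $m$ additionally lies in $K^{p,\delta}(\infty)$. This simultaneous membership $m \in K^\infty(\infty) \cap K^p(\infty) \cap K^{p,\delta}(\infty)$ is exactly what the statement claims for the mild piece, and the subcritical $K^{p,\delta}$ control is what will let the perturbation integral be handled without a logarithmic divergence.

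With $m$ in hand I would construct $w$ as a Leray-type weak solution to the $m$-perturbed Navier–Stokes equations \eqref{v-pert NSE} with initial data $w_0^\epsilon \in L^2$, via the usual Galerkin/mollification scheme adapted to the perturbed system. The construction produces (i) strong $L^2$ attainment of the initial data, (ii) weak-in-time continuity against test functions, and (iv) suitability in the sense of Definition \ref{suitablesolngen}; these are standard once the energy estimates close. The crux is the perturbed energy inequality (iii), where everything hinges on controlling $\int_{t_1}^{t_2}\int_{\R^3} m\cdot(w\cdot\nabla)w\,dx\,ds$. Here I would bound the integrand by Hölder, putting $m$ in $L^p_x$, $\nabla w$ in $L^2_x$, and $w$ in the conjugate Lebesgue exponent, then use the Gagliardo–Nirenberg interpolation $\norm{w}{L^r_x} \lesssim \norm{w}{L^2_x}^{1-a}\norm{\nabla w}{L^2_x}^{a}$ to interpolate $w$ between its energy norm and its dissipation; the time-weighted bound $\norm{m(\cdot,s)}{L^p} \lesssim s^{-\frac{1}{2}(1-\frac{3}{p}-\delta)}$ coming from $m \in K^{p,\delta}(\infty)$ then produces a time singularity of order $s^{-(1-\delta)}$ (this is the computation referenced at line \eqref{subcritrequirement}), which is integrable near $s=0$ precisely because $\delta>0$ makes the exponent strictly less than $1$ — the subcritical gain over the critical case where one would get the non-integrable $s^{-1}$. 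After a Young's-inequality absorption of the $\norm{\nabla w}{L^2}^2$ factor into the dissipation on the left, one closes a Grönwall-type estimate giving a global-in-time energy bound for $w$. The main obstacle, and the heart of the argument, is precisely this closure of the perturbed energy estimate: making the Hölder exponents, the Gagliardo–Nirenberg interpolation parameter $a$, and the Kato time-weight conspire so that the resulting time singularity is integrable and the gradient term is absorbable. Finally, setting $u := m+w$ and $\pi := \pi_m + \tilde\pi$, the remark following \eqref{v-pert NSE} guarantees $u$ solves \eqref{Navier-Stokes equations}, and suitability of $u$ follows from suitability of $w$ together with the smoothness of the mild piece $m$ (using Theorem \ref{peturbedCKN}, whose hypotheses are met since $m \in K^{p,\delta}(\infty)$ qualifies as an admissible perturbation by Remark \ref{kato is serrin}).
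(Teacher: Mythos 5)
Your proposal is correct and follows essentially the same route as the paper: the same Calderón-type splitting from Corollary \ref{usefulBesovsplit}, the same embedding-plus-interpolation step to make the critical norm of $m_0^\epsilon$ small, the same use of Lemma \ref{contractionLemma}(ii) to propagate the subcritical $K^{p,\delta}(\infty)$ bound to $m$, and the same closure of the perturbed energy estimate via H\"older, Gagliardo--Nirenberg and Gr\"onwall, where $\delta>0$ renders the time singularity integrable. The only cosmetic difference is that the paper imports the construction of the suitable weak solution $w$ to the perturbed system off-the-shelf from Albritton's Proposition 2.3 (plus a Cantor diagonalisation to reach $T=\infty$) rather than redoing the Galerkin scheme, which is what you propose to carry out by hand.
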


\begin{proof}
\textbf{Step 1: Split the initial data.}  We begin by splitting the initial data as in Proposition \ref{usefulBesovsplit}: for $\epsilon>0$ (which we fix later) there exists $p>\max(q,3)$, $0<\delta<1-\frac{3}{p}$ and $\gamma_1, \gamma_2, C_{q,s}>0$ all depending on $q$ and $s$ such that $u_0 = m_0 + w_0$ with $m_0 \in \Bes{-1+\frac{3}{p}+\delta}{p}{p}\cap \Bes{s}{q}{\infty}$ and $w_0 \in L^2\cap \Bes{s}{q}{\infty}$ (weakly) divergence-free such that:
\begin{align}
\norm{m_0}{\Bes{-1+\frac{3}{p}+\delta}{p}{\infty}} &\leq C_{q,s} \epsilon^{\gamma_1}\norm{u_0}{\Bes{s}{q}{\infty}}, \label{subcritpart}
\\ \norm{w_0}{L^2} &\leq C_{q,s} \epsilon^{-\gamma_2}\norm{u_0}{\Bes{s}{q}{\infty}}, \label{L2part}
\\ \norm{m_0}{\Bes{s}{q}{\infty}},\norm{w_0}{\Bes{s}{q}{\infty}} &\leq C_{q,s}\norm{u_0}{\Bes{s}{q}{\infty}}. \label{persistencyline}
\end{align}
Define $\delta^*:=-1+\frac{3}{q}-s>0$. By using the Besov space embedding (Proposition \ref{Besov embeddings}) and (\ref{persistencyline}):
\begin{align}
    \norm{m_0}{\Bes{-1+\frac{3}{p}-\delta^*}{p}{\infty}} = \norm{m_0}{\Bes{s-3\left(\frac{1}{q}-\frac{1}{q}\right)}{p}{\infty}} \lesssim_{q,s}\norm{m_0}{\Bes{s}{q}{\infty}} \lesssim_{q,s}\norm{u_0}{\Bes{s}{q}{\infty}}. \label{supcritpart}
\end{align}
Using Besov space interpolation (Proposition \ref{Besovinterpolation}) with $-1+\frac{3}{p}\in \left(-1+\frac{3}{p}-\delta^*,-1+\frac{3}{p}+\delta\right)$ and lines (\ref{subcritpart}) and (\ref{supcritpart}) we may then see that the critical norm satisfies:
\begin{align}
    \norm{m_0}{\Bes{-1+\frac{3}{p}}{p}{\infty}} &\leq \norm{m_0}{\Bes{-1+\frac{3}{p}-\delta^*}{p}{\infty}}^\theta\norm{m_0}{\Bes{-1+\frac{3}{p}+\delta}{p}{\infty}}^{1-\theta} 
    \\ &\lesssim_{q,s}\norm{u_0}{\Bes{s}{q}{\infty}}^\theta \epsilon^{\gamma_1(1-\theta)}\norm{u_0}{\Bes{s}{q}{\infty}}^{1-\theta}
    \\ &\lesssim_{q,s}\epsilon^{\gamma_1(1-\theta)}\norm{u_0}{\Bes{s}{q}{\infty}} \label{critpart}
\end{align}
where $\theta = \frac{\delta}{\delta+\delta^*} \in (0,1)$ (note that $\theta$ depends on $q$ and $s$).
\\
\\
\textbf{Step 2: Construct a mild solution corresponding to the subcritical part.} \\We set $\epsilon := \left( \frac{C_{q,s}^{(1)}}{2C_{q,s}\norm{u_0}{\Bes{s}{q}{\infty}}} \right)^{\frac{1}{\gamma_1(1-\theta)}}$ where $C_{q,s}^{(1)}=C_{p,0}^{(1)}$ comes from Theorem \ref{Besovmildsoln} and $C_{q,s}$ is the implicit constant that arises in line (\ref{critpart}). Viewing $m_0 \in \Bes{-1+\frac{3}{p}}{p}{\infty}$, this choice of $\epsilon$ ensures that we may apply Theorem \ref{Besovmildsoln} with ``$\delta=0$ and $T=\infty$": there exists a mild solution $m \in K^p(\infty)\cap K^\infty(\infty)$ to the Navier-Stokes equations with initial data $m_0$ such that 
\begin{equation}
    \norm{m}{K^p(\infty)\cap K^\infty(\infty)} := \norm{m}{K^p(\infty)}+\norm{m}{K^\infty(\infty)} \leq C^{(2)}_{p,0}=:C^{(2)}_{q,s}. 
\end{equation}
We exploit the fact that we also have $m_0 \in \Bes{-1+\frac{3}{p}+\delta}{p}{\infty}$ to obtain additional information about $m$ by applying Lemma \ref{contractionLemma} (ii). Using the same notation as in Theorem \ref{Besovmildsoln}, suppose that $a\in K^{p,\delta}(\infty)$ and $b\in K^p(\infty)\cap K^\infty(\infty)$. For any $0<t<\infty$, using Young's inequality, the Oseen kernel estimate (\ref{Oseen Lp bnd}) and Hölder's inequality, we see:
\begin{align}
    \norm{B(a,b)(\cdot,t)}{L^p} &\leq \int_0^t \norm{K}{L^1}\norm{(a \otimes b)(\cdot,s)}{L^p}ds 
    \\ &\lesssim \int_0^t\frac{1}{(t-s)^\frac{1}{2}} \norm{a(\cdot,s)}{L^p}\norm{b(\cdot,s)}{L^\infty}ds
    \\ &\lesssim \norm{a}{K^{p,\delta}(\infty)}\norm{b}{K^\infty(\infty)}\int_0^t\frac{1}{(t-s)^\frac{1}{2}\cdot s^{\frac{1}{2}\left(1-\frac{3}{p}-\delta\right)}\cdot s^\frac{1}{2}}ds
    \\ &\lesssim_{p,\delta}\norm{a}{K^{p,\delta}(\infty)}\norm{b}{K^\infty(\infty)}t^{-\frac{1}{2}\left(1-\frac{3}{p}-\delta\right)}.
\end{align}
Therefore, 
\begin{equation}
    \norm{B(a,b)}{K^{p,\delta}(\infty)} \leq C_{p,\delta}\norm{a}{K^{p,\delta}(\infty)}\norm{b}{K^p(\infty)\cap K^\infty(\infty)}.
\end{equation}
This allows\footnote{Taking care to compare $C_{p,\delta} \leq \lambda_{p,0}$ from line (\ref{bilinearKpT}); indeed, we may make $\lambda_{p,0}$ greater if we need.} us to apply Lemma \ref{contractionLemma} (ii) to find that \begin{equation}
    m \in K^p(\infty)\cap K^\infty(\infty)\cap K^{p,\delta}(\infty)
\end{equation} with 
\begin{equation}
    \norm{m}{K^{p,\delta}(\infty)} \lesssim \norm{e^{t\Delta}m_0}{K^{p,\delta}(\infty)} . 
\end{equation}
Using our splitting, we have that 
\begin{equation}
    \norm{m}{K^{p,\delta}(\infty)} \lesssim \norm{e^{t\Delta}m_0}{K^{p,\delta}(\infty)} \lesssim_{q,s}\norm{m_0}{\Bes{-1+\frac{3}{p}+\delta}{p}{\infty}} \lesssim_{q,s} \epsilon^{\gamma_1}\norm{u_0}{\Bes{s}{q}{\infty}}= C_{q,s}\norm{u_0}{\Bes{s}{q}{\infty}}^{-\frac{\theta}{1-\theta}}. \label{subcritkatoestim}
\end{equation}

We also note that as $m\in K^\infty(\infty),$ we know that $m$ is smooth - see for example, Section 6.4 of \cite{sereginlecturenotes}.
\\
\\
\textbf{Step 3: Consider the perturbed Navier-Stokes Equations corresponding to $w_0$.}
\\ 
We now consider the $m$-perturbed Navier Stokes equations
\begin{equation}
 \label{mPerturbed Navier-Stokes equations}
    \partial_t w -\Delta w +\text{div}(w\otimes w + m \otimes w + w \otimes m) +\nabla \tilde{\pi} =0,
    \qquad \text{div}( w) = 0
\end{equation}
with initial data $w_0.$
\\ For any $T>0$, we note that $m\in L^r_tL^p_x((0,T)\times \R^3)$ where $r>2$ is defined by the relation $\frac{2}{r}+\frac{3}{p}=1$. Indeed,
\begin{align}
    \norm{m}{L^r_tL^p_x((0,T)\times \R^3)}^r &\leq \int_0^T t^{-\frac{r}{2}\left(1-\frac{3}{p}-\delta\right)} \norm{m}{K^{p,\delta}(\infty)}^r dt =T^\frac{\delta r}{2} \norm{m}{K^{p,\delta}(\infty)}^r. \label{katoestim Lr}
\end{align}

This allows us to use the results presented in Albritton's paper \cite{AlbBesovBlow} off-the-shelf. Precisely, for any fixed $T>0$, Proposition 2.3 of \cite{AlbBesovBlow} states that there exists a weak solution $w$ to the $m$-perturbed Navier-Stokes equations such that:
\begin{enumerate}
    \item[(a)] $w\in L^\infty_tL^2_x\cap L^2_t\dot{H}^1_x((0,T)\times \R^3)$ and $\tilde{\pi} \in  L^2_tL^\frac{3}{2}_x + L^2_tL^2_x((0,T)\times \R^3)$.
    \item[(b)] $\lim_{t\downarrow0^+}\norm{w(\cdot,t)-w_0}{L^2}=0$.
    \item[(c)] $(w,\tilde{\pi})$ is suitable (in the sense of Definition \ref{suitablesolngen}) on $(0,T)\times \R^3$.
    \item[(d)] $w$ satisfies the global energy inequality: for almost every $t_1 \in \left[0,T\right]$ (and including $t_1=0$) and for all $t_2 \in \left[ t_1, T\right]$, the following inequality holds true:
    \begin{equation}
        \norm{w(t_2)}{L^2_x}^2 +2\int_{t_1}^{t_2} \norm{\nabla w(s)}{L^2_x}^2ds \leq \norm{w(t_1)}{L^2_x}^2 + 2\int_{t_1}^{t_2} \int_{\R^3} m\cdot(w\cdot\nabla)w\;dxds.
    \end{equation}
    \item[(e)] for all $\phi \in C^\infty_c(\R^3),$ the map $t \mapsto \int_{\R^3}w(x,t)\cdot\phi(x)\;dx$ is continuous on $[0,T].$
\end{enumerate}
In particular, for all $t\in [0,T],$ we may estimate, by using Hölder's inequality and Young's inequality,
\begin{align}
\norm{w(t)}{L^2_x}^2 + 2\int_0^t \norm{\nabla w(s)}{L^2_x}^2 ds \leq \norm{w_{0}}{L^2_x}^2 &+ 2\int_0^t C_p\norm{m(s)}{L^{p}_x}^r\norm{w(s)}{L^2_x}^2 ds+ \int_0^t\norm{\nabla w(s)}{L^2_x}^2 ds. \label{Calderoncontrol2a}
\end{align}
where $\frac{2}{r}+\frac{3}{p}=1.$
Applying Gronwall's inequality yields:
\begin{equation} \label{gronreskato2a}
\norm{w(t)}{L^2_x}^2 + \int_0^t \norm{\nabla w(s)}{L^2_x}^2 ds \leq \norm{w_{0}}{L^2_x}^2\exp\left(C_p\int_0^t \norm{m(s)}{L^{p}_x}^r ds \right). 
\end{equation}
Using (\ref{subcritkatoestim}), (\ref{katoestim Lr}) and the above estimates, we then obtain
\begin{align}
\norm{w}{L^\infty_t L^2_x(\R^3\times(0,T)}^2 + \int_0^T \norm{\nabla w(s)}{L^2_x}^2 ds &\leq \norm{w_{0}}{L^2_x}^2\exp\left(C_{q,s} T^{\frac{\delta r}{2}} \norm{u_0}{\Bes{s}{q}{\infty}}^{-\frac{\theta r}{1-\theta}}\right). \label{Calderonenergyestim}
\end{align}

Using this energy bound and a Cantor diagonalisation argument, we may construct a solution $w$ to the $m$-perturbed Navier-Stokes equations on $(0,\infty)\times \R^3$, satisfying properties (a)-(e). 
\\ Finally, we define $u:= m+w$ and $\pi:= \pi_m +\tilde{\pi}$, where $\pi_m$ is the pressure corresponding to $m$. Using the properties of $m$ (in particular, $m$ is smooth) and the fact that $w$ is a suitable weak solution to the $m$-perturbed equation, we finally see that $u$ is a suitable weak solution to the Navier-Stokes equations on $(0,\infty)\times \R^3$.
\end{proof}
\newpage
\section{Quantifying the number of singular points}
\subsection{Splitting a Leray-Hopf weak solution}
\begin{Lemma} \label{Besovenergy2}
Let $T \in (0,\infty)$, $2<q<\infty$, $-1+\frac{2}{q}<s<0$. There exist constants $p>3$, $\delta\in \left(0,1-\frac{3}{p}\right)$, $\gamma_1,\gamma_2>0$ all depending on $q$ and $s$ such that the following holds:
Let $u$ be a weak Leray-Hopf solution to the Navier-Stokes equations corresponding to divergence-free initial data $u_0\in\Bes{s}{q}{\infty}(\R^3)\cap L^2(\R^3)$ on the time interval $[0,\infty)$.
Then we may conclude that $u=m+w$ on $\R^3 \times (0,T)$ where $m$ and $w$ satisfy the following estimates: 
\begin{equation} \label{mkato2}
\norm{m}{K^{p,\delta}(T)}+\norm{m}{K^{\infty,\delta}(T)} \lesssim _{q,s}T^{-\frac{\delta}{2}},
\end{equation}
\begin{equation} \label{wenergy2}
\sup_{0<t<T} \norm{w(t)}{L^2_x}^2 + \int_0^T \norm{\nabla w(s)}{L^2_x}^2 ds \lesssim_{q,s}T^\frac{\delta\gamma_2}{\gamma_1}\norm{u_0}{\Bes{s}{q}{\infty}}^{2(\frac{\gamma_2}{\gamma_1}+1)}.
\end{equation} 
Here, $m$ is the unique weak Leray solution corresponding to partial initial data $m_0\in \Bes{-1+\frac{3}{p}+\delta}{p}{\infty}(\R^3)$ (as defined below in (\ref{westimkato2})), and exists smoothly on $(0,T)$. 

\end{Lemma}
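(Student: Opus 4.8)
The plan is to follow the proof of Theorem~\ref{Global solution supercritical Besov data} almost verbatim, but carried out on the finite window $(0,T)$ and with every constant tracked as a power of $T$ and of $\norm{u_0}{\Bes{s}{q}{\infty}}$. First I would apply Corollary~\ref{usefulBesovsplit} with a free parameter $\epsilon>0$ to write $u_0=m_0+w_0$, obtaining the (weakly) divergence-free pieces $m_0\in\Bes{-1+\frac{3}{p}+\delta}{p}{p}\cap\Bes{s}{q}{\infty}$ and $w_0\in L^2\cap\Bes{s}{q}{\infty}$ together with $\norm{m_0}{\Bes{-1+\frac{3}{p}+\delta}{p}{\infty}}\lesssim_{q,s}\epsilon^{\gamma_1}\norm{u_0}{\Bes{s}{q}{\infty}}$ and $\norm{w_0}{L^2}\lesssim_{q,s}\epsilon^{-\gamma_2}\norm{u_0}{\Bes{s}{q}{\infty}}$, the parameters $p,\delta,\gamma_1,\gamma_2$ being exactly those furnished by the corollary. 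Here a genuine simplification over Theorem~\ref{Global solution supercritical Besov data} occurs: because $\delta>0$ and $m$ is only needed on the \emph{finite} interval $(0,T)$, no Besov interpolation to a critical norm is required, and the subcritical norm of $m_0$ can be fed straight into the mild theory.

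\textbf{The mild piece $m$.} Next I would calibrate $\epsilon$ so that the smallness hypothesis (\ref{Besovinfty}) of Theorem~\ref{Besovmildsoln} holds on $(0,T)$, i.e.\ $T^{\frac{\delta}{2}}\norm{m_0}{\Bes{-1+\frac{3}{p}+\delta}{p}{\infty}}\leq C^{(1)}_{p,\delta}$. By the previous step this is achieved by the choice
\[
\epsilon^{\gamma_1}\sim_{q,s}T^{-\frac{\delta}{2}}\norm{u_0}{\Bes{s}{q}{\infty}}^{-1}.
\]
Theorem~\ref{Besovmildsoln} then yields a mild solution $m$ with $\norm{m}{K^{p,\delta}(T)}+\norm{m}{K^{\infty,\delta}(T)}\leq C^{(2)}_{p,\delta}T^{-\frac{\delta}{2}}$, which is exactly (\ref{mkato2}); membership in $K^{\infty,\delta}(T)$ forces $m$ to be smooth on $(0,T)$. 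Since $m_0\in L^2$ and $m$ lies in the Serrin class $L^r_tL^p_x$ with $\tfrac{2}{r}+\tfrac{3}{p}=1$ (the computation (\ref{katoestim Lr})), weak--strong uniqueness identifies $m$ with \emph{the} Leray--Hopf solution issued from $m_0$, so $m$ satisfies the energy equality and attains $m_0$ in $L^2$ as $t\to0^+$.

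\textbf{The energy of $w$.} I would then \emph{define} $w:=u-m$ on $(0,T)$; subtracting the equation for $m$ from that for $u$ shows $w$ solves the $m$-perturbed equations (\ref{v-pert NSE}) with $w(\cdot,t)\to w_0$ in $L^2$. The core step is the perturbed energy inequality
\[
\norm{w(t)}{L^2_x}^2+2\int_0^t\norm{\nabla w(s)}{L^2_x}^2\,ds\leq\norm{w_0}{L^2_x}^2+2\int_0^t\int_{\R^3}m\cdot(w\cdot\nabla)w\,dx\,ds,
\]
which I would obtain by adding the strong energy inequality for $u$ (valid from $s=0$) to the energy equality for $m$ and subtracting twice the identity for $\frac{d}{dt}\langle u,m\rangle_{L^2}$ produced by testing the $u$-equation against the smooth field $m$: the viscous cross terms cancel, and the nonlinear cross terms collapse after integration by parts (using $\text{div}\,u=\text{div}\,m=0$) to precisely $\int m\cdot(w\cdot\nabla)w$. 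From here the argument is identical to (\ref{Calderoncontrol2a})--(\ref{gronreskato2a}): Hölder, Gagliardo--Nirenberg and Young (legitimate since $p>3$) bound the cross term by $\tfrac12\norm{\nabla w}{L^2_x}^2+C_p\norm{m}{L^p_x}^r\norm{w}{L^2_x}^2$, and Gronwall gives
\[
\sup_{0<t<T}\norm{w(t)}{L^2_x}^2+\int_0^T\norm{\nabla w(s)}{L^2_x}^2\,ds\leq\norm{w_0}{L^2_x}^2\exp\Big(C_p\int_0^T\norm{m(s)}{L^p_x}^r\,ds\Big).
\]
The decisive point is that (\ref{katoestim Lr}) and (\ref{mkato2}) give $\int_0^T\norm{m}{L^p_x}^r\lesssim T^{\frac{\delta r}{2}}\norm{m}{K^{p,\delta}(T)}^r\lesssim_{q,s}1$, so the exponential is an absolute constant depending only on $q,s$. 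Substituting the calibration of the previous step into $\norm{w_0}{L^2}^2\lesssim_{q,s}\epsilon^{-2\gamma_2}\norm{u_0}{\Bes{s}{q}{\infty}}^2=(\epsilon^{\gamma_1})^{-2\gamma_2/\gamma_1}\norm{u_0}{\Bes{s}{q}{\infty}}^2$ then produces exactly the exponents $T^{\delta\gamma_2/\gamma_1}$ and $\norm{u_0}{\Bes{s}{q}{\infty}}^{2(\gamma_2/\gamma_1+1)}$ of (\ref{wenergy2}).

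\textbf{Main obstacle.} I expect the real work to be the perturbed energy inequality for the difference $w=u-m$ of a \emph{given} Leray--Hopf solution and the constructed mild solution. This needs the strong energy inequality for $u$ to hold from the initial time, the smooth field $m$ to be admissible as a test function up to $t=0$ (where its Kato norms are singular but integrable in $t$ with the Serrin exponent $r$), and $m$ to satisfy the energy \emph{equality} — the last being where weak--strong uniqueness for the subcritical/Serrin field $m$ is invoked. Once this relative energy inequality is secured, the remainder is the routine Gronwall bookkeeping inherited from Theorem~\ref{Global solution supercritical Besov data}.
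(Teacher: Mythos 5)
Your proposal is correct and follows essentially the same route as the paper: Calder\'on-type splitting with $\epsilon$ calibrated so that $T^{\delta/2}\norm{m_0}{\Bes{-1+\frac{3}{p}+\delta}{p}{\infty}}$ meets the smallness threshold of Theorem \ref{Besovmildsoln}, then $w:=u-m$, the relative energy inequality via the Prodi--Serrin cross-term identity, Gronwall, and the Kato-norm bookkeeping that yields the exponents $T^{\delta\gamma_2/\gamma_1}$ and $2(\gamma_2/\gamma_1+1)$. The only cosmetic differences are that the paper establishes $m\in L^\infty_tL^2_x\cap L^2_t\dot H^1_x$ explicitly by propagating $L^2$ through the Picard iteration (Lemma \ref{contractionLemma}(ii)) and Sohr's theorems before identifying it as the Leray--Hopf solution from $m_0$ (the step you delegate to weak--strong uniqueness), and it runs the Gronwall argument with $\norm{m}{L^\infty_x}^2$ rather than $\norm{m}{L^p_x}^r$.
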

\begin{proof}
\textbf{Step 1: split the initial data and construct a mild solution:}
Given $\epsilon>0$ to be determined, we use Corollary \ref{usefulBesovsplit} to decompose the initial data into $u_0=m_0^\epsilon + w_0^\epsilon$ so that $m_0^\epsilon$ and $w_0^\epsilon$ satisfy, for some $\delta, C_{q,s},\gamma_1,\gamma_2>0,p>q$ all depending on $q$ and $s$,
\begin{gather}
\norm{m_0^\epsilon}{\Bes{-1+\frac{3}{p}+\delta}{p}{\infty}} \leq C_{q,s}\norm{m_0^\epsilon}{\Bes{-1+\frac{3}{p}+\delta}{p}{p}} \leq C_{q,s} \epsilon^{\gamma_1}\norm{u_0}{\Bes{s}{q}{\infty}}, \label{mestimkato2}
\\ \norm{w_0^\epsilon}{L^2} \leq C_{q,s} \epsilon^{-\gamma_2}\norm{u_0}{\Bes{s}{q}{\infty}} \label{westimkato2}.
\end{gather}
We note from one of (\ref{deltavaluesub}) or (\ref{deltavaluesuper}) that $\delta$ takes a specific value depending on $q$ and $s$, and lies in the non-empty range $\left(0,1-\frac{3}{p}\right)$. 
We take \begin{equation} \label{epsilonchoicekato2}
\epsilon := \left(\dfrac{C_{q,s}^{(1)}}{C_{q,s}T^\frac{\delta}{2}\norm{u_0}{\Bes{s}{q}{\infty}}}\right)^\frac{1}{\gamma_1}
\end{equation}
so\footnote{Where $C_{q,s}^{(1)}$ is the constant from (\ref{Besovinfty}) in Theorem \ref{Besovmildsoln}, recalling $\delta$ and $p$ depend now on $q$ and $s$.} that by (\ref{mestimkato2}), \begin{equation}
T^\frac{\delta}{2}\norm{m_0^\epsilon}{\Bes{-1+\frac{3}{p}+\delta}{p}{\infty}} \leq C_{q,s}^{(1)}.
\end{equation}
This allows us to apply Theorem \ref{Besovmildsoln} to deduce that there exists $m^\epsilon \in K^{p,\delta}(T)\cap K^{\infty,\delta}(T)$, a mild solution on $(0,T)$ corresponding to $m_0^\epsilon$. With this specific choice of $\epsilon$, we now denote $m=m^\epsilon$,  $m_0=m^\epsilon_0$ and $w_0=w^\epsilon_0$.
Theorem \ref{Besovmildsoln} also tells us that $m$ satisfies
\begin{equation} \label{Kpdelestim2}
\norm{m}{K^{\infty,\delta}(T)}+\norm{m}{K^{p,\delta}(T)} \leq C_{q,s}^{(2)}T^{-\frac{\delta}{2}}
\end{equation}
as claimed.
\pagebreak\\
\textbf{Step 2: propagate mild solution properties; $m$ is a Leray-Hopf weak solution:}
Observe that $m_0 \in L^2(\R^3)$ since $$\norm{m_0}{L^2} \leq \norm{u_0}{L^2}+\norm{w_0}{L^2}.$$ We may then use Lemma \ref{contractionLemma} (ii) to propagate this information to deduce that $m$ is the unique Leray-Hopf solution corresponding to $m_0\in \Bes{-1+\frac{3}{p}+\delta}{p}{\infty}(\R^3)\cap L^2(\R^3)$ on $[0,T)$.
\\ First, we show that $m \in L^\infty((0,T);L^2(\R^3))$. Letting $0<t<T$, $a \in L^\infty((0,T);L^2(\R^3))$ and $b \in K^{\infty,\delta}(T)$, we proceed with the same notation as Theorem \ref{Besovmildsoln}:
\begin{align}
\norm{B(a,b)_i(\cdot,t)}{L^2(\R^3)} &\leq \int_0^t \norm{K_{ijl}(\cdot,t-\tau)\ast \left[ a_j(\cdot,\tau)b_l(\cdot,\tau) \right]}{L^2(\R^3)} d\tau
\\ &\leq \int_0^t \norm{K_{ijl}(\cdot,t-\tau)}{L^1(\R^3)} \norm{ a_j(\cdot,\tau)b_l(\cdot,\tau) }{L^{2}(\R^3)} d\tau.
\end{align}
where we used Young's convolution inequality. Using the Oseen kernel estimate (\ref{Oseen Lp bnd}) and applying Hölder's inequality gives:
\begin{align}
\norm{B(a,b)(\cdot,t)}{L^2(\R^3)} &\lesssim  \int_0^t \frac{1}{(t-\tau)^{1/2}} \norm{a(\cdot,\tau)}{L^2(\R^3)}\norm{b(\cdot,\tau)}{L^\infty(\R^3)} d\tau
\\ &\lesssim \norm{a}{L^\infty_tL^2_x}\int_0^t \frac{\tau^{\frac{1}{2}\left(1-\delta \right)}\norm{b(\cdot,\tau) }{L^\infty(\R^3)}}{\tau^{\frac{1}{2}\left(1-\delta \right)}(t-\tau)^{\frac{1}{2}}} d\tau 
\end{align}
Now using the definition of the Kato space norm,
\begin{align}
\norm{B(a,b)(\cdot,t)}{L^2(\R^3)} &\lesssim \norm{a}{L^\infty_tL^2_x}\norm{b}{K^{\infty,\delta}(T)}\int_0^t \frac{1}{\tau^{\frac{1}{2}\left(1-\delta \right)}(t-\tau)^{\frac{1}{2}}} d\tau 
\\ &\lesssim_\delta T^\frac{\delta}{2}\norm{a}{L^\infty_tL^2_x}\norm{b}{K^{\infty,\delta}(T)}  .
\end{align}
Of course, as $B$ is symmetric, we may switch the order of inputs $a$ and $b$ to retrieve the same result. Given that also we have $e^{t\Delta}m_0 \in L^\infty((0,T);L^2(\R^3))$ we are now in a position to apply Lemma \ref{contractionLemma} (ii), which gives us that $m\in L^\infty((0,T);L^2(\R^3))$.

Our next step is to show that $m \otimes m \in L^2((0,T)\times \R^3)$. Indeed, using Hölder's inequality, Lebesgue interpolation between $L^2$ and $L^\infty$ and (\ref{Kpdelestim2}):
\begin{align}
\norm{m \otimes m}{L^2((0,T)\times \R^3)}^2 &\leq \int_0^T \norm{m(\cdot,t)}{L^4(\R^3)}^2 dt
\\ &\leq \int_0^T \norm{m(\cdot,t)}{L^2(\R^3)}\norm{m(\cdot,t)}{L^\infty(\R^3)} dt 
\\&\leq \norm{m}{L^\infty((0,T);L^2(\R^3))}\int_0^T \frac{t^{\frac{1}{2}\left(1-\delta \right)}\norm{m(\cdot,t)}{L^\infty(\R^3)}}{t^{\frac{1}{2}\left(1-\delta \right)}} dt
\\ &\lesssim_\delta T^{\frac{1}{2}\left(1+\delta\right)}\norm{m}{L^\infty((0,T);L^2(\R^3))}\norm{m}{K^{\infty,\delta}(T)}.
\end{align}  
Applying Theorem 2.4.1 of Section IV of Sohr's book \cite{Sohrbook} implies that $m \in L^1_{\text{loc}}([0,T];H^1(\R^3)))$. Then, applying Theorem 2.3.1 of Section IV of Sohr's book \cite{Sohrbook}, we have that $m$ is a weak Leray-Hopf solution on $(0,T)$. 
\\ 
\pagebreak\\\textbf{Step 3: consider the perturbed equation:}
We may then define $w$ on $\R^3 \times (0,T)$ via the relation $w(x,t):=u(x,t)-m(x,t)$.
Then $w$ satisfies the $m$-perturbed Navier-Stokes equations (in the sense of distributions):
\begin{equation}
\partial_tw-\Delta w + (m \cdot \nabla )w + (w \cdot \nabla )m + (w \cdot \nabla )w +\nabla(\pi-\pi_1) =0,
\end{equation}
\begin{equation}
\text{div}( w) = 0,
\end{equation}
\begin{equation}
\lim_{\tau \to 0} \norm{w(\cdot,\tau)-w_0}{L^2}=0
\end{equation}
(where $\pi_1$ is the pressure corresponding to $m$).
By definition of $w$, we see that for any $0<t<T$,
\begin{align}
\frac{1}{2}\norm{w(t)}{L^2_x}^2 &-\frac{1}{2}\norm{w_{0}}{L^2_x}^2 + \int_0^t \norm{\nabla w(s)}{L^2_x}^2 ds 
\\ &=   \frac{1}{2}\norm{u(t)}{L^2_x}^2 - \int_{\R^3}u(x,t)\cdot m(x,t) \: dx +\frac{1}{2}\norm{m(t)}{L^2_x}^2 \\ & \qquad + \int_0^t \norm{\nabla u(s)}{L^2_x}^2 ds -2\int_0^t\int_{\R^3}\nabla u\cdot \nabla m \: dxds +\int_0^t \norm{\nabla m(s)}{L^2_x}^2 ds \\ &\qquad -\frac{1}{2}\norm{u_0}{L^2_x}^2 + \int_{\R^3}u_0\cdot m_0 \: dx -\frac{1}{2}\norm{m_0}{L^2_x}^2 .
\end{align}
Now, as $u$ and $m$ are Leray-Hopf weak solutions on $[0,T]$, we know they satisfy the energy inequality. Furthermore, as $m$ satisfies the Prodi-Serrin condition (see Remark \ref{kato is serrin}), we may use Proposition 14.3 in \cite{Lemarie} on the cross term\footnote{$\int_{\R^3}u(x,t)\cdot m(x,t) \: dx + 2\int_0^t\int_{\R^3}\nabla u\cdot \nabla m \: dxds =  \int_0^t\int_{\R^3} (m\cdot \nabla)u\cdot m + (u\cdot \nabla)u\cdot m \:dxds + \int_{\R^3}u_0\cdot m_0 \: dx $.} and the fact that $u$ and $m$ are weakly divergence-free to see
\begin{align}
&\frac{1}{2}\norm{w(t)}{L^2_x}^2 -\frac{1}{2}\norm{w_{0}}{L^2_x}^2 + \int_0^t \norm{\nabla w(s)}{L^2_x}^2 ds  \leq  \int_0^t \int_{\R^3}(w \cdot \nabla)w \cdot m \: dx ds. \label{Calderoncontrol1}
\end{align}
Now we may estimate, by using Hölder's inequality and Young's inequality,
\begin{align}
\frac{1}{2}\norm{w(t)}{L^2_x}^2 + \int_0^t \norm{\nabla w(s)}{L^2_x}^2 ds &\leq \frac{1}{2}\norm{w_{0}}{L^2_x}^2 + \int_0^t C\norm{m(s)}{L^{\infty}_x}^2\norm{w(s)}{L^2_x}^2 ds \nonumber \\ &+ \frac{1}{2}\int_0^t\norm{\nabla w(s)}{L^2_x}^2 ds. \label{Calderoncontrol2}
\end{align}
Applying Gronwall's inequality yields
\begin{equation} \label{gronreskato2}
\norm{w(t)}{L^2_x}^2 + \int_0^t \norm{\nabla w(s)}{L^2_x}^2 ds \leq \norm{w_{0}}{L^2_x}^2\exp\left(C\int_0^t \norm{m(s)}{L^{\infty}_x}^2 ds \right). 
\end{equation}
But then, by definition of the Kato norm, see that 
\begin{align}
\int_0^T \norm{m(s)}{L^{\infty}_x}^2 ds &\leq \int_0^Ts^{-1+\delta}\norm{m}{K^{\infty,\delta}(T)}^2 ds \label{subcritrequirement}
= C_{q,s} \norm{m}{K^{\infty,\delta}(T)}^2 T^{\delta}.
\end{align}
Combining all of this with (\ref{Kpdelestim2}), we then obtain
\begin{align}
\norm{w}{L^\infty(0,T;L^2(\R^3))}^2 + \int_0^T \norm{\nabla w(s)}{L^2_x}^2 ds &\leq \norm{w_{0}}{L^2_x}^2\exp\left(C_{q,s} (T^{-\frac{\delta}{2}})^2 T^{\delta} \right).
\end{align}
We may then substitute our estimate for $\norm{w_0}{L^2_x}$ from (\ref{westimkato2}) with our specific choice of $\epsilon$ from (\ref{epsilonchoicekato2}) to see
\begin{align} 
\norm{w}{L^\infty(0,T;L^2(\R^3))}^2 + \int_0^T \norm{\nabla w(s)}{L^2_x}^2 ds &\leq C_{q,s}T^\frac{\delta\gamma_2}{\gamma_1}\norm{u_0}{\Bes{s}{q}{\infty}}^{2(\frac{\gamma_2}{\gamma_1}+1)}.
\end{align}
\end{proof}

\subsection{Proof of main result B}
Recall from Section \ref{singularsection}, given a weak Leray-Hopf solution $v$, we denote the singular set at time $T$ by $\sigma(T)$. We denote the $\varrho$-isolated singular set at time $T$ as 
\begin{equation}
\sigma_\varrho(T) := \Big\lbrace x \in \sigma(T) \ \Big\vert \ B_\varrho(x)\cap B_\varrho(y) = \emptyset \quad \forall y \in \sigma(T)\setminus\lbrace x \rbrace \Big\rbrace. \nonumber
\end{equation}
When proving Theorem B, we will rescale the solution to isolate the singular points under consideration by a certain radius. Our key estimate will then come from estimating the number of points in an \textit{isolated} singular set. This key ingredient is made concrete in the following Proposition: 
\begin{Proposition}\label{SinfBesbound} 
Let $\mathcal{M}\geq 1$, $2<q<\infty$, $-1+\frac{3}{q}<s<0$. Let $u$ be a suitable weak Leray-Hopf solution to the Navier-Stokes equations on $\R^3 \times (0, \infty)$, corresponding to divergence-free initial data $u_0$ in $\Bes{s}{q}{\infty}(\R^3) \cap L^2(\R^3)$ such that $$\norm{u_0}{\Bes{s}{q}{\infty}(\R^3)}\leq \mathcal{M}.$$ Then we may bound the number of elements in $\sigma_\frac{1}{2}(1)$: 
\begin{equation}
 \#\sigma_\frac{1}{2}(1) \lesssim_{q,s} \mathcal{M}^{6q-9}. \label{isolatedbound}
\end{equation} 
\end{Proposition}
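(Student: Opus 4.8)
The plan is to apply the splitting of Lemma \ref{Besovenergy2} at unit time scale and then run a Barker-type $\epsilon$-regularity counting argument driven by the perturbed Caffarelli--Kohn--Nirenberg criterion, Theorem \ref{peturbedCKN}. First I would invoke Lemma \ref{Besovenergy2} with $T=1$ to write $u=m+w$ on $\R^3\times(0,1)$, where $m$ is the smooth mild/Leray solution obeying the scale-invariant bound $\norm{m}{K^{p,\delta}(1)}+\norm{m}{K^{\infty,\delta}(1)}\lesssim_{q,s}1$ (crucially \emph{independent} of $\mathcal{M}$), while $w$ carries the energy
\begin{equation*}
E:=\sup_{0<t<1}\norm{w(t)}{L^2_x}^2+\int_0^1\norm{\nabla w(s)}{L^2_x}^2\,ds\lesssim_{q,s}\mathcal{M}^{2\left(\frac{\gamma_2}{\gamma_1}+1\right)}=\mathcal{M}^{4q-6}.
\end{equation*}
Here I used $\norm{u_0}{\Bes{s}{q}{\infty}}\le\mathcal{M}$, the fact that $-1+\tfrac3q<s<0$ forces $q>3$ and places us in the subcritical regime of Corollary \ref{usefulBesovsplit} with $p=2q$, and the identity $\tfrac{\gamma_2}{\gamma_1}=2(q-2)$ from \eqref{Specificratiosub}. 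Since $m$ is smooth (hence bounded) near any point $(x_0,1)$, such a point is singular for $u$ if and only if it is singular for $w$; moreover $w$ inherits suitability for the $m$-perturbed system from suitability of $u$ and smoothness of $m$.

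Next I fix $\varrho=\tfrac14$ and write $\sigma_{1/2}(1)=\{x_1,\dots,x_N\}$. By definition of the isolated singular set these points are pairwise at distance at least $1$, so the cylinders $Q_\varrho(x_j,1)=B_\varrho(x_j)\times(1-\tfrac1{16},1)$ are pairwise disjoint and contained in $\R^3\times(\tfrac{15}{16},1)$. Because $m\in K^{p,\delta}(1)\hookrightarrow L^r_tL^p_x$ satisfies the Prodi--Serrin condition $\tfrac2r+\tfrac3p<1$ (Remark \ref{kato is serrin}), Theorem \ref{peturbedCKN} applies on each $Q_\varrho(x_j,1)$, and its contrapositive (each $x_j$ being singular for $w$) yields
\begin{equation*}
\epsilon_{p,q}\varrho^2<\int_{Q_\varrho(x_j,1)}\lvert w\rvert^3+\lvert\tilde\pi\rvert^{3/2}\,dx\,dt.
\end{equation*}
Summing over $j$ and using disjointness reduces the task to bounding the integrals of $\lvert w\rvert^3$ and $\lvert\tilde\pi\rvert^{3/2}$ over $\R^3\times(\tfrac{15}{16},1)$ by the right power of $\mathcal{M}$.

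For the velocity term, the Gagliardo--Nirenberg/Ladyzhenskaya inequality $\norm{w}{L^3_x}^3\lesssim\norm{w}{L^2_x}^{3/2}\norm{\nabla w}{L^2_x}^{3/2}$ with Hölder in time gives $\int_0^1\norm{w}{L^3_x}^3\,dt\lesssim E^{3/2}\lesssim\mathcal{M}^{6q-9}$, already the target exponent. For the pressure I would split $\tilde\pi=\tilde\pi_a+\tilde\pi_b$ along its defining Poisson equation $-\Delta\tilde\pi=\partial_i\partial_j(w_iw_j)+2\partial_i\partial_j(m_iw_j)$, so that by Calderón--Zygmund $\norm{\tilde\pi_a}{L^{3/2}_x}\lesssim\norm{w}{L^3_x}^2$ and $\norm{\tilde\pi_b}{L^2_x}\lesssim\norm{m}{L^\infty_x}\norm{w}{L^2_x}$. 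The $\tilde\pi_a$-contribution sums, exactly like $\lvert w\rvert^3$, to $\lesssim\mathcal{M}^{6q-9}$. The \emph{genuinely delicate} term is $\tilde\pi_b$, which only lies in $L^2_x$ rather than $L^{3/2}_x$ because the cross term $m\otimes w\in L^2_x$ has no global $L^{3/2}_x$ control; this cross-pressure obstruction, which forbids a purely global $L^{3/2}$ pressure bound, is the main difficulty.

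To handle $\tilde\pi_b$ I would exploit that it is only integrated over the small balls $B_\varrho(x_j)$, on which $L^2\hookrightarrow L^{3/2}$ with constant of size $\lvert B_\varrho\rvert^{1/6}$, so $\int_{B_\varrho(x_j)}\lvert\tilde\pi_b\rvert^{3/2}\lesssim\varrho^{3/4}\norm{\tilde\pi_b}{L^2(B_\varrho(x_j))}^{3/2}$. Summing over the $N$ disjoint balls and applying the power-mean inequality $\sum_j a_j^{3/2}\le N^{1/4}\bigl(\sum_j a_j^2\bigr)^{3/4}$ with $a_j=\norm{\tilde\pi_b}{L^2(B_\varrho(x_j))}$ collapses the disjoint sum to $\norm{\tilde\pi_b}{L^2(\R^3)}^{3/2}$; then $\norm{\tilde\pi_b}{L^2_x}\lesssim\norm{m}{L^\infty_x}\norm{w}{L^2_x}$ together with the $\mathcal{M}$-independent bound $\int_{15/16}^1\norm{m}{L^\infty_x}^{3/2}\,dt\lesssim_{q,s}1$ gives
\begin{equation*}
\sum_{j=1}^N\int_{Q_\varrho(x_j,1)}\lvert\tilde\pi_b\rvert^{3/2}\lesssim_{q,s} N^{1/4}\norm{w}{L^\infty_tL^2_x}^{3/2}\lesssim N^{1/4}E^{3/4}\lesssim N^{1/4}\mathcal{M}^{3q-\frac92}.
\end{equation*}
Combining the three estimates yields $N\,\epsilon_{p,q}\varrho^2\lesssim_{q,s}\mathcal{M}^{6q-9}+N^{1/4}\mathcal{M}^{3q-9/2}$. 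The final step is a Young-inequality absorption: since $\tfrac43\bigl(3q-\tfrac92\bigr)=4q-6$, I write $N^{1/4}\mathcal{M}^{3q-9/2}\le\tfrac12\epsilon_{p,q}\varrho^2 N+C_{q,s}\mathcal{M}^{4q-6}$, absorb $\tfrac12\epsilon_{p,q}\varrho^2N$ into the left-hand side, and use $\mathcal{M}\ge1$ with $4q-6\le 6q-9$ to conclude $\#\sigma_{1/2}(1)=N\lesssim_{q,s}\mathcal{M}^{6q-9}$. The key point, and the main obstacle, is precisely the localisation of the $L^2_x$ cross-pressure $\tilde\pi_b$ to the isolated balls followed by this absorption, which both produces the sharp exponent $6q-9$ and keeps the estimate free of any dependence on $\norm{u_0}{L^2}$.
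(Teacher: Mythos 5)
Your proposal is correct and follows essentially the same route as the paper: the unit-scale splitting of Lemma \ref{Besovenergy2}, the perturbed Caffarelli--Kohn--Nirenberg criterion at each isolated singular point, Calder\'on--Zygmund for the self-interaction pressure, and control of the cross-pressure only in $L^2_x$ via $\norm{m}{L^\infty_x}\norm{w}{L^2_x}$. The single (cosmetic) difference is in the final bookkeeping: where you localise the $L^2$ cross-pressure to the disjoint balls and absorb the resulting $N^{1/4}E^{3/4}$ term by Young's inequality, the paper runs a three-way pigeonhole so that this term contributes $\int|\pi_2|^2\lesssim E$ directly; both give $N\lesssim E^{3/2}+E\lesssim_{q,s}\mathcal{M}^{6q-9}$.
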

\begin{proof}[Proof of Proposition \ref{SinfBesbound}]
Let us fix $-1+\frac{3}{q}<s<0$, and set $\varrho:=\frac{1}{2}$. We may label\footnote{We know that the isolated singular set is finite; for example, this may be deduced from Proposition 15.7 of \cite{3DNSE}.} the elements $\sigma_\rho(1)=\lbrace x_1,...,x_N \rbrace$.  We estimate $N$ from above. 
Let us use the previous Lemma \ref{Besovenergy2}: there exist constants $\delta\in \left(0,1-\frac{3}{p}\right)$, $\gamma_1, \gamma_2>0$ all depending on $q$ and $s$ so that we may split $u=m+w$ in such a way that:
\begin{equation} \label{mkatoT}
\norm{m}{K^{p,\delta}(1)}+\norm{m}{K^{\infty,\delta}(1)} \lesssim_{q,s}1,
\end{equation}
\begin{equation} \label{wenergyE}
E:= \sup_{0<t<1} \norm{w(t)}{L^2_x}^2 + \int_0^1 \norm{\nabla w(s)}{L^2_x}^2 ds \lesssim_{q,s}\norm{u_0}{\Bes{s}{q}{\infty}}^{2(\frac{\gamma_2}{\gamma_1}+1)},
\end{equation} 
where $m$ is a weak Leray solution corresponding to partial initial data as described in Lemma \ref{Besovenergy2}. Note that, as $m$ is a smooth suitable weak solution to the Navier-Stokes equations and $u$ is a suitable weak solution to the Navier-Stokes equations, $w$ is a suitable weak solution to the $m$-perturbed Navier-Stokes equations.
It follows by the perturbed version of the Caffarelli-Kohn-Nirenberg Theorem, Theorem \ref{peturbedCKN}, that there exists a constant $\epsilon\in(0,1)$ depending on $q$ and $s$ such that
\begin{equation}
\frac{1}{\varrho^2}\int_{1-\varrho^2}^1\int_{B_\varrho(x_i)} |w|^3 + |\tilde{\pi}|^{3/2} dxdt \geq \epsilon \qquad \text{for} \;\; i=1,\ldots, N,
\end{equation}
where $\tilde{\pi}$ is the perturbed pressure. We may decompose $\tilde{\pi} = \pi_1+\pi_2$, which are defined again by the relations
\begin{align}
-\Delta\pi_1 = \partial_i\partial_j(w_iw_j),
\qquad -\Delta\pi_2 = 2\partial_i\partial_j(m_iw_j).
\end{align}
Now, using just Hölder's inequality, we see that 
\begin{equation}
\int_{1-\varrho^2}^1 \int_{B_\varrho(x_i)} |\pi_2|^{3/2} dxdt \leq \left( \int_{1-\varrho^2}^1 \int_{B_\varrho(x_i)} |\pi_2|^2 dxdt \right)^{3/4} \varrho^{5/4} \omega_3^{1/4},
\end{equation} 
where $\omega_3$ is the volume of the unit ball in $\R^3$.
Thus, we obtain
\begin{equation}
\frac{\epsilon\varrho^2}{2^{1/2}} \leq \int_{1-\varrho^2}^1\int_{B_\varrho(x_i)} |w|^3 dxdt + \int_{1-\varrho^2}^1\int_{B_\varrho(x_i)}|\pi_1|^{3/2}dxdt + \left( \int_{1-\varrho^2}^1 \int_{B_\varrho(x_i)} |\pi_2|^2 dxdt \right)^{3/4} \varrho^{5/4} \omega_3^{1/4}. 
\end{equation}
Therefore, by the pigeonhole principle, either 
\begin{equation}
\int_{1-\varrho^2}^1\int_{B_\varrho(x_i)} |w|^3 dxdt \geq \bar{\epsilon}\varrho^2,
\quad
\text{or} \quad
\int_{1-\varrho^2}^1\int_{B_\varrho(x_i)}|\pi_1|^{3/2}dxdt \geq \bar{\epsilon}\varrho^2,
\end{equation}
\begin{equation}
\text{or}\quad\int_{1-\varrho^2}^1\int_{B_\varrho(x_i)}|\pi_2|^{2}dxdt \geq \frac{\bar{\epsilon}^\frac{4}{3}\varrho}{\omega_3^\frac{1}{3}} ,\qquad \text{where} \qquad \bar{\epsilon} := \dfrac{\epsilon}{3\cdot2^{1/2}}.
\end{equation}
Summing over all $i=1,...,N$, we then see that
\begin{align} \label{Nmin}
N\min\left(\bar{\epsilon}\varrho^2,  \frac{\bar{\epsilon}^\frac{4}{3}\varrho}{\omega_3^\frac{1}{3}}\right) &\leq \sum_{i=1}^N \int_{1-\varrho^2}^1\int_{B_\varrho(x_i)}  |w|^3 + |\pi_1|^{3/2} + |\pi_2|^2dxdt
\\ &\leq \int_{1-\varrho^2}^1\int_{\R^3}  |w|^3 + |\pi_1|^{3/2} + |\pi_2|^2dxdt. \label{Totalestim1}
\end{align}
We will now estimate each of these three terms. 
First, we use Lebesgue interpolation (between $L^2$ and $L^6$) and the Sobolev inequality to obtain:
\begin{equation}
\int_{1-\varrho^2}^1 \norm{w(t)}{L^3_x}^3 dt\leq  \int_{1-\varrho^2}^1 \norm{w(t)}{L^2_x}^{3/2}\norm{w(t)}{L^6_x}^{3/2}dt \lesssim\int_{1-\varrho^2}^1 \norm{w(t)}{L^2_x}^{3/2}\norm{\nabla w(t)}{L^2_x}^{3/2}dt.
\end{equation}
Then using Hölder's inequality and our energy estimate (\ref{wenergyE}) for $w$ , we may estimate
\begin{align}
\int_{1-\varrho^2}^1\int_{\R^3} |w|^3 dxdt &\lesssim \norm{w}{L^{\infty}_tL^2_x}^{3/2}\int_{1-\varrho^2}^1 \norm{\nabla w(t)}{L^2_x}^{3/2}dt \nonumber \\  &\lesssim \varrho^{1/2}\norm{w}{L^{\infty}_tL^2_x}^{3/2} \norm{\nabla w}{L^2_{t,x}}^{3/2}  \lesssim \varrho^{1/2}E^{3/2}. \label{westim}
\end{align}
Next, we use the Calderón–Zygmund Theorem to see that:
\begin{equation}
\norm{\pi_1}{L^{3/2}}^{3/2} \lesssim \norm{w}{L^{3}}^3,\label{pressure1est}
\end{equation}
and 
\begin{align}
\norm{\pi_2}{L^{2}_x} &\lesssim\sum_{i,j}\norm{m_iw_j}{L^{2}_x} \lesssim \norm{m}{L^\infty_x}\norm{w}{L^2_x}, \label{pressure2est}
\end{align}
where we also applied Hölder's inequality in line (\ref{pressure2est}).
\\Using (\ref{pressure1est}) followed by (\ref{westim}), the $\pi_1$ term may be estimated 
\begin{align}
\int_{1-\varrho^2}^1 \norm{\pi_1(t)}{L^{3/2}_x}^{3/2} dt &\lesssim \int_{1-\varrho^2}^1 \norm{w(t)}{L^3_x}^3dt \lesssim \varrho^{1/2}E^{3/2}. \label{pi2estim}
\end{align}
\\For the final term, we may use (\ref{pressure2est}) and our estimates on  (\ref{mkatoT}) and (\ref{wenergyE}) for $m$ and $w$ respectively to see
\begin{align}
\int_{1-\varrho^2}^1\int_{\R^3} |\pi_2|^2dxdt &= \int_{1-\varrho^2}^1 \norm{\pi_2(t)}{L^2_x}^2dt \lesssim \int_{1-\varrho^2}^1 \norm{m}{L^\infty_x}^2\norm{w}{L^2_x}^2dt 
\\ &\lesssim \norm{w}{L^\infty_tL^2_x}^2\int_{0}^1 \norm{m}{K^{\infty,\delta}(1)}^2t^{-(1-\delta)} dt
\\ &\lesssim_{q,s}  E. \label{pi2estim2}
\end{align}
An analysis of the minimum of line (\ref{Nmin}) reveals that  
\begin{subnumcases}{\min{\left(\bar{\epsilon}\varrho^2,\dfrac{\bar{\epsilon}^{\frac{4}{3}}\varrho}{\omega_3^{\frac{1}{3}}} \right)} = }
    \bar{\epsilon}\varrho^2 & $\text{if}\; \varrho\leq \tilde{\epsilon}, $ 
   \\
    \dfrac{\bar{\epsilon}^{\frac{4}{3}}\varrho}{\omega_3^{\frac{1}{3}}} & \text{if}\;$ \varrho\geq \tilde{\epsilon},$ \label{Nmincaseb}
  \end{subnumcases}
  where $\tilde{\epsilon}:=\left( \dfrac{\bar{\epsilon}}{\omega_3}\right)^{1/3}<1$.
As $\tilde{\epsilon}<\frac{1}{2},$ we find ourselves in the second case because $\varrho=\frac{1}{2}$.  
This puts us in a position to combine all these estimates ((\ref{Totalestim1}),(\ref{westim}),(\ref{pi2estim}),(\ref{pi2estim2}) and (\ref{Nmincaseb})) together to see that
\begin{equation} \label{EboundofN}
N \lesssim_{q,s} E^\frac{3}{2} + E.
\end{equation}
To conclude, we use line (\ref{wenergyE}) to substitute $E \lesssim_{q,s}\norm{u_0}{\Bes{s}{q}{\infty}}^{2(\frac{\gamma_2}{\gamma_1}+1)}$. Finally, we use line (\ref{Specificratiosub}) to simplify the exponent, noting that we are in the subcritical case.
\end{proof}
\begin{Remark} \label{isolatedremark}
    This proposition may be easily adapted for all $-1+\frac{2}{q}<s<0$, bounding the number of points in the singular set at any time $T>0$ with any isolation radius $0<\varrho<T^\frac{1}{2}$ in terms of $\mathcal{M}, T$ and $\varrho.$ The exponents will depend on $\gamma_1, \gamma_2$ and $\delta$ from the previous Lemma \ref{Besovenergy2}, which in turn depend on $q$ and $s$. For our purposes and simplicity, we only stated the result in the case above. 
\end{Remark}

With this, we may now move on to proving Theorem B. For convenience, we restate  result here:

\begin{MainTheorem}  \label{sequential scaleinvbnd}
Let $3<q<\infty$, $-1+\frac{3}{q}\leq s<0$ and $M\geq 1$. Let $u: \R^3\times(-1,\infty) \rightarrow \R^3$ be a suitable Leray-Hopf weak solution to the Navier-Stokes equations, where we assume:
\begin{enumerate}
\item[(a)] $u$ first blows up at time $0$;
 \item[(b)] there exists an increasing sequence $t_n \in (-1,0)$ with $t_n\uparrow 0$ such that  
 \begin{align} \label{TypeI time slice assump Besov}
 \sup_{n\in\N}\left[ (-t_n)^{\frac{1}{2}\left(s+1-\frac{3}{q}\right)}\norm{u(\cdot,t_n)}{\Bes{s}{q}{\infty}} \right] \leq M.
 \end{align}

\end{enumerate}  
Then the number of points in the singular set may be estimated as follows: 
\begin{equation}
    \# \sigma(0) \lesssim_{q,s}M^{6q-9}.
\end{equation}
\end{MainTheorem}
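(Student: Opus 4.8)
The plan is to reduce Theorem \ref{sequential scaleinvbnd} to Proposition \ref{SinfBesbound} by a parabolic rescaling centred at the time slices $t_n$. The key observation is that the scale-invariant quantity appearing in the Type-I hypothesis (\ref{TypeI time slice assump Besov}) is exactly the homogeneous Besov norm of the rescaled initial data. Setting $\lambda_n := (-t_n)^{1/2}$ and defining $u^{(n)}(y,\tau) := \lambda_n\, u(\lambda_n y,\ t_n + \lambda_n^2 \tau)$, the translation and scaling invariance of the Navier--Stokes equations guarantee that $u^{(n)}$ is again a suitable weak solution; its initial time $\tau = 0$ corresponds to $t = t_n$, and the singular time $t=0$ is mapped to $\tau = 1$ since $t_n + \lambda_n^2 = t_n + (-t_n) = 0$. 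Using the dilation scaling $\norm{\lambda f(\lambda\,\cdot)}{\Bes{s}{q}{\infty}} = \lambda^{1+s-\frac{3}{q}}\norm{f}{\Bes{s}{q}{\infty}}$, the initial data of $u^{(n)}$ satisfies
$$\norm{u^{(n)}(\cdot,0)}{\Bes{s}{q}{\infty}} = (-t_n)^{\frac{1}{2}\left(s+1-\frac{3}{q}\right)}\norm{u(\cdot,t_n)}{\Bes{s}{q}{\infty}} \leq M,$$
which is precisely the content of (\ref{TypeI time slice assump Besov}).

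First I would verify that $u^{(n)}$ is an admissible input for Proposition \ref{SinfBesbound} on $\R^3\times(0,\infty)$. The essential point is that $u$ first loses smoothness at time $0$, hence is smooth on $\R^3\times(-1,0)$; in particular each $t_n$ is a regular time from which the solution may be restarted. On $(t_n,0)$ the energy \emph{equality} holds, and combining this with the strong energy inequality from good times $s\uparrow 0$ propagates the energy inequality from $t_n$ through the singularity, so that $u$ restricted to $[t_n,\infty)$ is a suitable Leray-Hopf solution with initial data $u(\cdot,t_n)$. After rescaling, $u^{(n)}$ is therefore a suitable weak Leray-Hopf solution with divergence-free initial data $u^{(n)}(\cdot,0) = \lambda_n u(\lambda_n\,\cdot, t_n)$, which lies in $L^2(\R^3)$ (as $u(\cdot,t_n)\in L^2$) and in $\Bes{s}{q}{\infty}(\R^3)$ with norm at most $M$ by the display above. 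Proposition \ref{SinfBesbound} (with $\mathcal{M} = M$) then gives $\#\sigma^{(n)}_{1/2}(1) \lesssim_{q,s} M^{6q-9}$, where $\sigma^{(n)}_{1/2}(1)$ is the $\tfrac{1}{2}$-isolated singular set of $u^{(n)}$ at time $1$.

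Next I would transfer this estimate back to $u$. Under the rescaling the singular set of $u^{(n)}$ at $\tau = 1$ equals $\lambda_n^{-1}\sigma(0)$, and two rescaled singular points $y,y'$ satisfy $B_{1/2}(y)\cap B_{1/2}(y')=\emptyset$ exactly when the corresponding points of $\sigma(0)$ lie at distance at least $\lambda_n$; consequently $\#\sigma^{(n)}_{1/2}(1) = \#\sigma_{\lambda_n/2}(0)$. Since $\sigma(0)$ is finite by the Caffarelli--Kohn--Nirenberg $\epsilon$-regularity theory, let $d_0>0$ be its minimal pairwise separation (the bound is immediate if $\#\sigma(0)\leq 1$). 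Because $t_n\uparrow 0$, for all sufficiently large $n$ we have $\lambda_n = (-t_n)^{1/2} < d_0$, so every singular point is $\tfrac{\lambda_n}{2}$-isolated and $\sigma_{\lambda_n/2}(0) = \sigma(0)$. Combining, for such $n$,
$$\#\sigma(0) = \#\sigma_{\lambda_n/2}(0) = \#\sigma^{(n)}_{1/2}(1) \lesssim_{q,s} M^{6q-9},$$
which is the claimed estimate.

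I expect the substantive work to lie entirely in Proposition \ref{SinfBesbound}, so the main care required here is the verification that the rescaled-translated $u^{(n)}$ genuinely restarts as a suitable Leray-Hopf solution at $\tau = 0$; this is where the hypothesis that $u$ first blows up only at time $0$ (hence is smooth, with energy equality, on $(-1,0)$) does the essential work, since without a regular restart time the reduction would fail. The remaining steps — the scaling bookkeeping for the Besov norm and the passage from the isolated to the full singular set as $\lambda_n\downarrow 0$ — are routine. I would also note that in the regime $3<q<\infty$, $-1+\frac{3}{q}\leq s<0$ the estimate is applied in the subcritical/critical case of Corollary \ref{usefulBesovsplit} (where $p=2q$ and, by (\ref{Specificratiosub}), $\gamma_2/\gamma_1 = 2(q-2)$), which is precisely the computation that produces the exponent $6q-9$; the critical endpoint $s=-1+\frac{3}{q}$ is covered since that case of Corollary \ref{usefulBesovsplit}, and hence the proof of Proposition \ref{SinfBesbound}, remains valid there.
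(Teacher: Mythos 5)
Your proposal is correct and follows essentially the same route as the paper: rescale by $\lambda_n=(-t_n)^{1/2}$ about the time slice $t_n$ so that the Type-I hypothesis becomes a Besov bound on the rescaled initial data, check that the restart at the regular time $t_n$ yields a suitable Leray-Hopf solution, and apply Proposition \ref{SinfBesbound} to the now $1$-isolated rescaled singular points (the paper chooses $n$ after fixing finitely many points of $\sigma(0)$ rather than via the minimal separation $d_0$, which is an immaterial difference). The only cosmetic imprecision is that the dilation identity for the homogeneous Besov norm holds as an equivalence up to a constant $C_s$ (Remark 2.19 of \cite{Bahouri}), not an exact equality, so one applies Proposition \ref{SinfBesbound} with $\mathcal{M}=C_sM$; this is absorbed into the implicit constant in the final bound.
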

\begin{Remark} In (\ref{TypeI time slice assump Besov}), we only consider the subcritical and critical case; this is because the supercritical case implies that there are no singular points at time $t=0$:
Indeed, with the same sequential assumption, the exponent of $-t_n$ in (\ref{TypeI time slice assump Besov}) is negative. Hence, as $t_n \uparrow 0$, $(-t_n)^{\frac{1}{2}\left(s+1-\frac{3}{q}\right)} \to \infty$ and hence we must have $\norm{u(\cdot,t_n)}{\Bes{s}{q}{\infty}} \to 0$. This implies that $u(\cdot,0)=0$. A backwards uniqueness argument (see Theorem 15.4 of \cite{Lermarie21st}) entails that $u$ must be zero on $\R^3 \times (-1,\infty)$, hence it cannot blow-up at time zero.
\end{Remark}
\begin{proof}[Proof of Theorem \ref{sequential scaleinvbnd}]
We adapt the proof of Theorem 2 of \cite{Barkersing21}, which considered the Type-I condition $\sup_{n\in\N}\norm{u(\cdot,t_n)}{L^{3,\infty}(\R^3)}\leq M$ instead of (\ref{TypeI time slice assump Besov}). First, let $\left\lbrace x_1,...,x_N\right\rbrace \subset \sigma(0)$, so that we simply need to bound $N$ from above.
As $t_n\uparrow 0$, there exists $n=n(\left\lbrace x_i\right\rbrace_{i=1}^{N})\in\N$ such that \begin{equation}
\min_{ \left\lbrace (i,j)\in  \left\lbrace 1,...,N\right\rbrace^2\vert i\neq j\right\rbrace}\left\vert x_i-x_j\right\vert \geq 2(-t_n)^\frac{1}{2}.
\end{equation}
Performing the Navier-Stokes rescaling (\ref{NSrescale}) with scale $\lambda:= (-t_n)^\frac{1}{2}<1$, define
\begin{equation}
\left(\tilde{u}(x,t), \tilde{\pi}(x,t)\right) := \left( \lambda u\left(\lambda x, \lambda^2t\right), \lambda^2 \pi\left(\lambda x, \lambda^2t\right)\right)
\end{equation}
so that now $\tilde{u}:\R^3\times\left(\frac{1}{t_n},\infty\right)\rightarrow\R^3$ and $\tilde{\pi}:\R^3\times\left(\frac{1}{t_n},\infty\right)\rightarrow\R$.
Using the rescaling property of the Besov norm (for example, see Remark 2.19 in \cite{Bahouri}) and assumption (b), we observe that: 
\begin{align}
\norm{\tilde{u}(\cdot,-1)}{\Bes{s}{q}{\infty}} &\leq C_s\sup_{n\in\N}\left[ (-t_n)^{\frac{1}{2}\left(s+1-\frac{3}{q}\right)} \norm{u(\cdot,t_n)}{\Bes{s}{q}{\infty}}\right] \leq C_sM, \label{initialtype1bnd}
\end{align}
where $C_s>1$ is a constant depending on $s$.
\\ We now track the location of the singular points $\left\lbrace (x_i,0)\right\rbrace_{i=1}^{N}$ of $u$ under this rescaling: these become $\left\lbrace (y_i,0)\right\rbrace_{i=1}^{N}$ singular points of $\tilde{u}$, where \begin{equation}
y_i:=\frac{x_i}{(-t_n)^\frac{1}{2}} \ \text{for each} \ i\in\lbrace 1,...,N\rbrace.
\end{equation}  
Crucially, we see that \begin{equation}
B_1(y_i) \cap B_1(y_j) = \emptyset \ \text{for all} \  i\neq j \in\lbrace 1,...,N\rbrace.
\end{equation}
Hence, $y_i$ are \textit{isolated} singularities for $\tilde{u}$ (with isolation radius $\geq 1$). Thus, we may apply the previous Proposition \ref{SinfBesbound} to bound $N$ from above. Indeed, by (\ref{isolatedbound}) and using line (\ref{initialtype1bnd}) and setting $\mathcal{M}=C_sM$ yields
\begin{align}
N \leq C_{q,s} M^{6q-9}
\end{align}
as claimed.
\end{proof}
\section{Appendix}
\subsection{Besov Spaces} \label{Besovspaces}
We briefly outline the construction of homogeneous Besov spaces, following the same construction as presented in Chapter 2 of \cite{Bahouri}. First, we define a dyadic partition of unity:
\begin{Proposition} \label{Dyadic} Define the annulus $\mathcal{A}:= \left\lbrace \xi \in \R^d : \frac{3}{4} \leq |\xi | \leq \frac{8}{3} \right\rbrace$. There exists a radial function $\phi \in C_c^\infty(\R^d;[0,1])$ with support in $\mathcal{A}$  satisfying:
\begin{equation}
\sum_{j\in\Z}\phi(2^{-j}\xi) = 1 \qquad \qquad \forall \xi \in \R^d\setminus \left\lbrace  0\right\rbrace \nonumber
\end{equation}
\end{Proposition}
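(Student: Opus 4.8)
The plan is to carry out the standard Littlewood--Paley construction, building $\phi$ as a ``difference of two dyadically dilated cut-offs'' so that the required series telescopes. First I would fix a radial non-increasing profile: choose $\chi\in C_c^\infty(\R^d;[0,1])$ of the form $\chi(\xi)=\theta(|\xi|)$, where $\theta:[0,\infty)\to[0,1]$ is smooth, non-increasing, identically $1$ on $[0,3/4]$ and identically $0$ on $[4/3,\infty)$. Such a $\theta$ exists by mollifying a suitably rescaled indicator function; radiality and smoothness of $\chi$ at the origin are automatic because $\theta$ is constant near $0$.

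Next I would set
\begin{equation}
\phi(\xi) := \chi(\xi/2) - \chi(\xi), \nonumber
\end{equation}
and check the structural properties. Radiality and $C_c^\infty$ regularity are inherited from $\chi$. For the support: if $|\xi|<3/4$ then also $|\xi/2|<3/4$, so $\chi(\xi/2)=\chi(\xi)=1$ and $\phi(\xi)=0$; if $|\xi|>8/3$ then $|\xi/2|>4/3$, so $\chi(\xi/2)=\chi(\xi)=0$ and again $\phi(\xi)=0$. Hence $\operatorname{supp}\phi\subseteq\mathcal{A}$. Finally, since $\theta$ is non-increasing and $|\xi/2|\le|\xi|$ we have $\chi(\xi/2)\ge\chi(\xi)$, giving $\phi\ge0$, while $\phi\le\chi(\xi/2)\le1$; thus $0\le\phi\le1$.

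The core computation is the telescoping identity. Writing $\phi(2^{-j}\xi)=\chi(2^{-(j+1)}\xi)-\chi(2^{-j}\xi)$, the symmetric partial sums collapse to
\begin{equation}
\sum_{j=-N}^{N}\phi(2^{-j}\xi) = \chi(2^{-(N+1)}\xi) - \chi(2^{N}\xi). \nonumber
\end{equation}
Fix $\xi\ne0$. As $N\to\infty$ we have $2^{-(N+1)}\xi\to0$, so $\chi(2^{-(N+1)}\xi)\to\chi(0)=1$, whereas $2^{N}|\xi|\to\infty$ forces $\chi(2^{N}\xi)=0$ for all large $N$ by compact support. Moreover the series is locally finite: for fixed $\xi\ne0$ the term $\phi(2^{-j}\xi)$ can be nonzero only when $3/4\le 2^{-j}|\xi|\le 8/3$, which holds for at most two values of $j$. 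Hence $\sum_{j\in\Z}\phi(2^{-j}\xi)=1$ for every $\xi\ne0$, as claimed.

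I do not expect a genuine obstacle here. The only points needing a little care are arranging the radial profile to be simultaneously smooth and non-increasing (which is what secures both $\phi\ge0$ and smoothness of $\chi$ at the origin), and the endpoint bookkeeping: it is precisely the choice of inner radius $3/4$ and outer radius $4/3$ for $\chi$, combined with the dyadic dilation by $2$, that makes the support of $\phi$ land exactly in the annulus $\{3/4\le|\xi|\le 8/3\}=\mathcal{A}$. Because the sum is locally finite away from the origin, no delicate summability or uniform-convergence issues arise.
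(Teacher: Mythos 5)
Your construction is correct and is essentially the standard one: the paper gives no proof of this proposition, deferring to Chapter 2 of Bahouri--Chemin--Danchin, and your telescoping argument $\phi(\xi)=\chi(\xi/2)-\chi(\xi)$ with a radial non-increasing cut-off is exactly the construction carried out there. The support bookkeeping, the bound $0\le\phi\le1$ via monotonicity of the profile, and the local finiteness of the sum (at most two nonzero terms for each $\xi\neq 0$, since $\tfrac{8/3}{3/4}<4$) are all handled correctly.
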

Fixing a choice of $\phi$, the homogeneous Littlewood-Paley projectors $\dot{\Delta}_j$ are Fourier multipliers defined for all $j\in \Z$ by:
\begin{Definition}
$\dot{\Delta}_j f:= \phi(2^{-j}D)f := \mathcal{F}^{-1}\left(\phi(2^{-j}\cdot)\mathcal{F}(f) \right)$ for all tempered distributions $f\in \mathcal{S}'(\R^d)$, where $\mathcal{F}$ is the Fourier transform.
\end{Definition}
The homogeneous Besov spaces are defined using these projectors:
\begin{Definition}
Let $s\in(-\infty,\frac{d}{p})$, $1\leq p,r\leq \infty$.\footnote{These conditions ensure that $\Bes{s}{p}{r}(\R^d)$ is a Banach space. One may also choose $s=\frac{d}{p}$ and $r=1$.} The homogeneous Besov space $\Bes{s}{p}{r}(\R^d)$ is defined as follows: \begin{align} \nonumber
\Bes{s}{p}{r}(\R^d):= \Bigg\lbrace f &\in \mathcal{S}'(\R^d) :\norm{f}{\Bes{s}{p}{r}} := \norm{\left(2^{js}\norm{\dot{\Delta}_j f}{L^p}\right)_{j\in\Z}}{l^r}:= \left(\sum_{j\in\Z} 2^{jrs}\norm{\dot{\Delta}_j f}{L^p}^r \right)^\frac{1}{r} \\ &\text{and} \; \sum_{j\in\Z}\dot{\Delta}_j f \: \text{converges to} \: f \: \text{in the sense of tempered distributions}\Bigg\rbrace. \nonumber
\end{align}
\end{Definition}
\subsection{Classical fixed point/persistency lemma}
\begin{Lemma} \label{contractionLemma}
(i) Let $X$ be a Banach space and let $B:X\times X \rightarrow X$ be a continuous bilinear map such that a constant $\lambda>0$ exists such that
$$\norm{B(x,y)}{X} \leq \lambda\norm{x}{X}\norm{y}{X} \qquad \forall x,y \in X. $$
Let $\alpha>0$ such that  $$\alpha < \dfrac{1}{4\lambda}.$$
\\ Then for any $a$ in the ball $B_{\alpha }(0) \subset X$, there exists a unique $x$ in the ball $B_{2\alpha }(0)$ such that \begin{equation*}
x = a + B(x,x).
\end{equation*}
(ii) Additionally, suppose that $Y \subseteq X$ is another Banach space and $a\in Y $. Suppose also that there exists a constant $0<\gamma<\lambda$ such that:
\begin{align}
    \norm{B(x,y)}{Y} \leq \gamma \norm{x}{X}\norm{y}{Y} \qquad \forall x \in X, \forall y \in Y, \\ \norm{B(y,x)}{Y} \leq \gamma \norm{x}{X}\norm{y}{Y}  \qquad \forall x \in X, \forall y \in Y.
\end{align}
Then the solution $x$ from (i) belongs to $Y$ and satisfies 
\begin{equation}
    \norm{x}{Y} \lesssim \norm{a}{Y}. 
\end{equation} 
\end{Lemma}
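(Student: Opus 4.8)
The plan is to treat part (i) as a direct application of the Banach fixed point theorem on a closed ball, and part (ii) as a persistency estimate carried out along the Picard iterates.

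For part (i), I would define the map $\Phi(x) := a + B(x,x)$ on the closed ball $\overline{B_{2\alpha}(0)} \subset X$, which is a complete metric space. The self-mapping property follows from $\norm{\Phi(x)}{X} \le \norm{a}{X} + \lambda\norm{x}{X}^2 \le \alpha + 4\lambda\alpha^2 = \alpha(1+4\lambda\alpha) < 2\alpha$, where the strict inequality uses $\alpha < \tfrac{1}{4\lambda}$. For the contraction estimate, bilinearity gives the telescoping identity $B(x,x) - B(y,y) = B(x-y,x) + B(y,x-y)$, so that $\norm{\Phi(x)-\Phi(y)}{X} \le \lambda(\norm{x}{X}+\norm{y}{X})\norm{x-y}{X} \le 4\lambda\alpha\,\norm{x-y}{X}$ with contraction constant $4\lambda\alpha < 1$. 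The Banach fixed point theorem then produces a unique fixed point $x \in \overline{B_{2\alpha}(0)}$; the self-mapping bound shows $\norm{x}{X} < 2\alpha$, placing $x$ in the open ball, while the contraction estimate forces uniqueness within all of $B_{2\alpha}(0)$.

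For part (ii), I would run the Picard iteration $x_0 := a$, $x_{n+1} := a + B(x_n,x_n)$, which converges to $x$ in $X$ by part (i) and which remains inside $\overline{B_{2\alpha}(0)}$ by induction. The hypotheses on $B$ in the $Y$-norm give $\norm{B(x_n,x_n)}{Y} \le \gamma\norm{x_n}{X}\norm{x_n}{Y} \le 2\alpha\gamma\,\norm{x_n}{Y}$, and since $\gamma < \lambda$ and $\alpha < \tfrac{1}{4\lambda}$ we obtain the crucial smallness $2\alpha\gamma < \tfrac12$. Feeding this into $\norm{x_{n+1}}{Y} \le \norm{a}{Y} + 2\alpha\gamma\,\norm{x_n}{Y}$ and inducting yields the uniform bound $\norm{x_n}{Y} \le \norm{a}{Y}/(1-2\alpha\gamma) \le 2\norm{a}{Y}$.

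The main obstacle is upgrading this uniform $Y$-bound on the iterates to the assertion that the $X$-limit $x$ itself lies in $Y$, since mere boundedness of $(x_n)$ in $Y$ does not by itself locate the limit in $Y$. I would resolve this by showing $(x_n)$ is Cauchy in $Y$. Writing $e_n := \norm{x_{n+1}-x_n}{X}$ and $d_n := \norm{x_{n+1}-x_n}{Y}$, the contraction gives $e_n \le (4\lambda\alpha)^n e_0$, while the same telescoping identity together with the $Y$-bilinear estimates and the uniform bounds $\norm{x_n}{Y}\le 2\norm{a}{Y}$, $\norm{x_n}{X}\le 2\alpha$ yields a recursion of the form $d_n \le C\,e_{n-1} + 2\alpha\gamma\,d_{n-1}$ with $2\alpha\gamma<\tfrac12$ and geometrically decaying forcing term $e_{n-1}$. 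Both rates being strictly below $1$, the $d_n$ decay geometrically and are summable, so $(x_n)$ is Cauchy in $Y$ and converges to some $y \in Y$; the continuous inclusion $Y \hookrightarrow X$ then forces $y = x$, whence $x \in Y$ with $\norm{x}{Y} = \lim_n \norm{x_n}{Y} \le 2\norm{a}{Y} \lesssim \norm{a}{Y}$.
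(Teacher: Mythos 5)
Your proof is correct and takes essentially the same route as the paper's cited source (Lemmas A.1--A.2 of \cite{GIP03}, the paper itself only gives this citation): Banach fixed point on the closed ball $\overline{B_{2\alpha}(0)}$ for (i), and persistency for (ii) via Picard iterates that are uniformly bounded and, thanks to the telescoping identity and the smallness $2\alpha\gamma<\tfrac12$, Cauchy in $Y$ — correctly spotting that boundedness alone would not place the limit in $Y$. The one point worth flagging is that identifying the $Y$-limit with $x$ uses continuity of the inclusion $Y\hookrightarrow X$, which the lemma states only as $Y\subseteq X$ but which is the intended (and needed) hypothesis in the cited reference.
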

See, for example, Lemma A.1 and Lemma A.2 of \cite{GIP03}.
\subsection{Existence of mild soutions: proof of Theorem \ref{Besovmildsoln}}\label{proofofmildtheorem}
\begin{proof}

We are concerned with bounding the bilinear map $B$ to apply Lemma \ref{contractionLemma}. Let $0<t<T$ and $a,b \in K^{p,\delta}(T)$. Using Young's convolution inequality, we estimate:
\begin{align}
\norm{[B(a,b)]_i(\cdot,t)}{L^p(\R^3)} &\leq \int_0^t \norm{K_{ijl}(\cdot,t-\tau)\ast \left[ a_j(\cdot,\tau)b_l(\cdot,\tau) \right] 
 }{L^p(\R^3)} d\tau
\\ &\leq \int_0^t \norm{K_{ijl}(\cdot,t-\tau)
}{L^\alpha(\R^3)} \norm{ a_j(\cdot,\tau)b_l(\cdot,\tau) }{L^{p/2}(\R^3)} d\tau
\end{align}
where $\alpha$ is defined by the relation $1=\frac{1}{p}+\frac{1}{\alpha}.$
Applying Hölder's inequality and the Oseen kernel estimate (\ref{Oseen Lp bnd}) gives:
\begin{align}
\norm{[B(a,b)]_i(\cdot,t)}{L^p(\R^3)} &\leq  \int_0^t \norm{K_{ijl}(\cdot,t-\tau)}{L^\alpha(\R^3)} \norm{a_j(\cdot,\tau)}{L^p(\R^3)}\norm{b_l(\cdot,\tau) }{L^p(\R^3)} d\tau
\\ &\leq \int_0^t \frac{\bar{C_p}}{(t-\tau)^{\frac{1}{2}\left(1+\frac{3}{p}\right)}} \norm{ a(\cdot,\tau)}{L^p(\R^3)}\norm{b(\cdot,\tau) }{L^p(\R^3)} d\tau. \label{oseenestline}
\end{align}
Then, using the definition of the Kato space (Definition \ref{Katospace}), we have
\begin{align}
\norm{B(a,b)(\cdot,t)}{L^p(\R^3)} &\lesssim_{p} \norm{a}{K^{p,\delta}(T)}\norm{b}{K^{p,\delta}(T)} \int_0^t \frac{1}{(t-\tau)^{\frac{1}{2}(1+\frac{3}{p})}\tau^{1-\frac{3}{p}-\delta}}  d\tau
\\ &\lesssim_{p,\delta}\norm{a}{K^{p,\delta}(T)}\norm{b}{K^{p,\delta}(T)}\frac{1}{t^{\frac{1}{2}(1-\frac{3}{p}-2\delta)}}.
\end{align}
Hence, for some constant $\lambda_{p,\delta}>0$ depending on $p$ and $\delta,$
\begin{equation} \label{bilinearKpT}
\norm{B(a,b)}{K^{p,\delta}(T)} \leq \lambda_{p,\delta}T^{\frac{\delta}{2}}\norm{a}{K^{p,\delta}(T)}\norm{b}{K^{p,\delta}(T)}.
\end{equation}
Aiming to use the classical iteration Lemma \ref{contractionLemma}, we know that we require that
\begin{equation} \label{requiremnt for mild}
\norm{e^{t\Delta}v_0}{K^{p,\delta}(T)} < \dfrac{1}{4\lambda_{p,\delta}T^{\frac{\delta}{2}}}.
\end{equation}
Indeed, by the characterisation of the Besov norm by the heat flow (see Proposition \ref{heat characterisation}), we know that 
\begin{align}
\norm{e^{t\Delta}v_0}{K^{p,\delta}(T)}&\leq \tilde{C}_{p,\delta}\norm{v_0}{\Bes{-1+\frac{3}{p}+\delta}{p}{\infty}} . \label{constructocompleto}
\end{align} 
Hence, by setting $C_{p,\delta}^{(1)}:= \dfrac{1}{4\lambda_{p,\delta}\tilde{C}_{p,\delta}}$ and using the hypothesis $T^\frac{\delta}{2}\norm{v_0}{\Bes{-\left(1-\frac{3}{p}-\delta \right)}{p}{\infty}} \leq C_{p,\delta}^{(1)}$, we see that (\ref{requiremnt for mild}) is satisfied. Thus, there exists a mild solution $v\in K^{p,\delta}(T)$ with 
\begin{equation}
\norm{v}{K^{p,\delta}(T)} \leq \dfrac{1}{2\lambda_{p,\delta}T^{\frac{\delta}{2}}}.
\end{equation} 
\\ Finally, we propagate\footnote{We use Theorem \ref{contractionLemma} (ii) - for $\delta>0$, one may simply use the above method with $p=\infty$. However, for $\delta=0$ the constant in line (\ref{bilinearKpT}) degenerates when $p=\infty$, which is why we present the following.} to deduce that $m \in K^{\infty,\delta}(T).$ Indeed, let $a\in K^{p,\delta}(T)$ and $b\in K^{\infty,\delta}(T)$. By a similar application of Young's convolution inequality and Holder's inequality as at the beginning of the proof, we see that
\begin{align}
\norm{[B(a,b)]_i(\cdot,t)}{L^\infty(\R^3)} &\leq  \int_0^t \norm{K_{ijl}(\cdot,t-\tau)}{L^\beta(\R^3)} \norm{a_j(\cdot,\tau)}{L^p(\R^3)}\norm{b_l(\cdot,\tau) }{L^\infty(\R^3)} d\tau
\\ &\lesssim_p \int_0^t \frac{1}{(t-\tau)^{\frac{1}{2}\left(1+\frac{3}{p}\right)}} \norm{ a(\cdot,\tau)}{L^p(\R^3)}\norm{b(\cdot,\tau) }{L^\infty(\R^3)} d\tau \label{oseenestline2}
\end{align}
where $\beta$ is defined by the relation $1=\frac{1}{p}+\frac{1}{\beta}.$ Then
\begin{align}
\norm{[B(a,b)](\cdot,t)}{L^\infty(\R^3)} &\lesssim_p  \int_0^t \frac{1}{(t-\tau)^{\frac{1}{2}\left(1+\frac{3}{p}\right)}(\tau)^{1-\delta-\frac{3}{2p}}} \norm{ a}{K^{p,\delta}(T)}\norm{b}{K^{\infty,\delta}(T)} d\tau. 
\\ &\lesssim_{p,\delta}  \frac{1}{t^{\frac{1}{2}-\delta}}\norm{ a}{K^{p,\delta}(T)}\norm{b}{K^{\infty,\delta}(T)}.
\end{align}
Therefore, for some constant $\gamma_{p,\delta}>0$ depending on $p$ and $\delta$, we have:
\begin{align}
    \norm{B(a,b)}{K^{\infty,\delta}(T)}\leq \gamma_{p,\delta}T^\frac{\delta}{2}\norm{ a}{K^{p,\delta}(T)}\norm{b}{K^{\infty,\delta}(T)}.
\end{align}
Without loss of generality, we may assume $\lambda_{p,\delta}>\gamma_{p,\delta}$. This allows us to apply Theorem \ref{contractionLemma} (ii) and the Besov space embedding $\Bes{-1+\frac{3}{p}+\delta}{p}{\infty}(\R^3) \hookrightarrow \Bes{-1+\delta}{\infty}{\infty}(\R^3)$ (see Proposition \ref{Besov embeddings}) to deduce that $v \in K^{p,\delta}(T)\cap K^{\infty, \delta}(T)$ with
\begin{align}
    \norm{v}{K^{\infty, \delta}(T)} \leq \norm{e^{t\Delta}v_0}{K^{\infty, \delta}(T)} \lesssim_\delta\norm{v_0}{\Bes{-1+\delta}{\infty}{\infty}} \lesssim_{p,\delta}\norm{v_0}{\Bes{-1+\frac{3}{p}+\delta}{p}{\infty}} \lesssim_{p,\delta}T^{-\frac{\delta}{2}}.
\end{align}
Then we take $C^{(2)}_{p,q}$ larger than $\frac{1}{2\lambda_{p,\delta}}$ and the implicit constant depending on $p$ and $\delta$ in the line above to conclude.
\end{proof}

\subsubsection*{Acknowledgements} The author thanks Tobias Barker for his supervision and many helpful discussions. This work is supported by Raoul \& Catherine Hughes (Alumni funds) and the University Research Studentship award EH-MA1333.

\bibliographystyle{plain}
\small
\nocite{*} 
\bibliography{Bibliography}

\begin{thebibliography}{10}

\bibitem{AlbBesovBlow}
D.~Albritton.
\newblock {Blow-up criteria for the Navier-Stokes equations in non-endpoint
  critical Besov spaces}.
\newblock {\em Analysis and PDE}, 11(6):1415--1456, 2018.

\bibitem{AlbrBarker}
D.~Albritton and T.~Barker.
\newblock {Global Weak Besov Solutions of the Navier{\textendash}Stokes
  Equations and Applications}.
\newblock {\em Archive for Rational Mechanics and Analysis}, 232:197--263,
  2018.

\bibitem{AlbBarkerLouiville}
D.~Albritton and T.~Barker.
\newblock {On Local Type I Singularities of the Navier–Stokes Equations and
  Liouville Theorems}.
\newblock {\em J. Math. Fluid Mech.}, 21(43), 2019.

\bibitem{Calderoningeostrophic}
D.~Albritton and Z.~Bradshaw.
\newblock {Non-decaying solutions to the critical surface quasi-geostrophic
  equations with symmetries.}
\newblock {\em Transactions of the American Mathematical Society},
  375(1):587–625, 2022.

\bibitem{nonuniqueness}
D.~Albritton, E.~Brué, and M.~Colombo.
\newblock {Non-uniqueness of Leray solutions of the forced Navier-Stokes
  equations}.
\newblock {\em Ann. Math.}, 196(1):415--455, 2022.

\bibitem{Bahouri}
H.~Bahouri, J.-Y. Chemin, and R.~Danchin.
\newblock {\em Fourier Analysis and Nonlinear Partial Differential Equations},
  volume~1.
\newblock Springer Berlin, Heidelberg, 2011.

\bibitem{Barkeruniq17}
T.~Barker.
\newblock {Uniqueness Results for Weak Leray{\textendash}Hopf Solutions of the
  Navier{\textendash}Stokes System with Initial Values in Critical Spaces}.
\newblock {\em Journal of Mathematical Fluid Mechanics}, 20:133--160, 2018.

\bibitem{Barkersing21}
T.~Barker.
\newblock {Higher integrability and the number of singular points for the
  Navier-Stokes equations with a scale-invariant bound}.
\newblock {\em To appear in Proceedings of the American Mathematical Society},
  2021.

\bibitem{spatialconc}
T.~Barker and C.~Prange.
\newblock {Quantitative Regularity for the Navier–Stokes Equations Via
  Spatial Concentration}.
\newblock {\em Communications in Mathematical Physics}, 385(2):717--792, 2021.

\bibitem{L3infweaksolns}
T.~Barker, G.~Seregin, and V.~Sverák.
\newblock {On stability of weak Navier–Stokes solutions with large
  $L^{3,\infty}$ initial data}.
\newblock {\em Communications in Partial Differential Equations}, 43(4), 2018.

\bibitem{Barkersupercritpress}
T.~Barker and W.~Wang.
\newblock {Estimates of the singular set for the Navier-Stokes equations with
  supercritical assumptions on the pressure}.
\newblock {\em Journal of Differential Equations}, 365:379--407, 2023.

\bibitem{IllposednessBesov}
J.~Bourgain and N.~Pavlović.
\newblock Ill-posedness of the navier–stokes equations in a critical space in
  3d.
\newblock {\em Journal of Functional Analysis}, 255(9):2233--2247, 2008.
\newblock Special issue dedicated to Paul Malliavin.

\bibitem{eventualregTsai}
Z.~Bradshaw and T.-P. Tsai.
\newblock Global existence, regularity, and uniqueness of infinite energy
  solutions to the navier-stokes equations.
\newblock {\em Communications in Partial Differential Equations},
  45(9):1168--1201, 2020.

\bibitem{bradshawasymptotic}
Z.~Bradshaw and W.~Wang.
\newblock {Asymptotic stability for the 3D Navier-Stokes equations in $L^3$ and
  nearby spaces}.
\newblock {\em arXiv preprint arXiv:2409.12918}, 2024.

\bibitem{CKN82}
L.~Caffarelli, R.~Kohn, and L.~Nirenberg.
\newblock {Partial Regularity of Suitable Weak Solutions of the Navier-Stokes
  Equations}.
\newblock {\em Communications on Pure and Applied Mathematics}, 35:771--831,
  1982.

\bibitem{Calderón}
C.~P. Calderón.
\newblock Existence of weak solutions for the {Navier-Stokes} equations with
  initial data in {$L^p$}.
\newblock {\em Transactions of the American Mathematical Society},
  318:179--200, 1990.

\bibitem{CannoneBesovmild}
M.~Cannone.
\newblock {A generalization of a theorem by Kato on Navier-Stokes equations}.
\newblock {\em Rev. Mat. Iberoam}, 13(3):515–541, 1997.

\bibitem{Chae2016RemovingDS}
D.~Chae and J.~Wolf.
\newblock {Removing discretely self-similar singularities for the 3D
  Navier–Stokes equations}.
\newblock {\em Communications in Partial Differential Equations}, 42:1359 --
  1374, 2016.

\bibitem{CalderoninMHD}
J.-Y. Chemin, D.~S. McCormick, J.~C. Robinson, and J.~L. Rodrigo.
\newblock {Local existence for the non-resistive MHD equations in Besov
  spaces}.
\newblock {\em Advances in Mathematics}, 286:1--31, 2016.

\bibitem{Shvydkoyillposed}
A.~Cheskidov and R.~Shvydkoy.
\newblock {Ill-posedness of the basic equations of fluid dynamics in Besov
  spaces}.
\newblock {\em Proceedings of the American Mathematical Society},
  138(3):1059--1067, 2010.

\bibitem{ChoeWolfYang}
H.~J. Choe, J.~Wolf, and M.~Yang.
\newblock {On regularity and singularity for $L^\infty(0,T;L^{3,w}(\R^3))$
  solutions to the Navier–Stokes equations}.
\newblock {\em Mathematische Annalen}, 377(1):617--642, 2020.

\bibitem{logimprovement}
H.J. Choe and J.~L. Lewis.
\newblock {On the Singular Set in the Navier–Stokes Equations}.
\newblock {\em Journal of Functional Analysis}, 175(2), 2000.

\bibitem{ESS03}
L.~Escauriaza, G.~Seregin, and {V. Šverák}.
\newblock {$L_{3,\infty}$-solutions of Navier-Stokes equations and backward
  uniqueness}.
\newblock {\em Uspekhi Mat. Nauk}, 58(2):3--44, 2003.

\bibitem{FJR72}
E.~B. Fabes, B.~F. Jones, and N.~M. Riviere.
\newblock {The initial value problem for the Navier-Stokes equations with data
  in $L^p$ }.
\newblock {\em Arch. Rational Mech. Anal.}, 45:222--240, 1972.

\bibitem{Kato64}
H.~Fujita and T.~Kato.
\newblock {On the Navier-Stokes initial value problem. I}.
\newblock {\em Archive for Rational Mechanics and Analysis}, 16:269--315, 1964.

\bibitem{GIP03}
I.~Gallagher, D.~Iftimie, and F.~Planchon.
\newblock Asymptotics and stability for global solutions to the navier-stokes
  equations.
\newblock {\em Annales de l’Institut Fourier}, 5:1387--1424, 2003.

\bibitem{Calderoninsemilinear}
I.~Gallagher and F.~Planchon.
\newblock {On global solutions to a defocusing semi-linear wave equation}.
\newblock {\em Revista Matemática Iberoamericana}, 19(1):161--177, 2003.

\bibitem{Giga}
Y.~Giga.
\newblock {Solutions for semilinear parabolic equations in $L^p$ and regularity
  of weak solutions of the Navier-Stokes system}.
\newblock {\em Journal of Differential Equations}, 62(2):186--212, 1986.

\bibitem{computerblowup}
T.~Hou.
\newblock {Potentially Singular Behavior of the 3D Navier-Stokes Equations}.
\newblock {\em Found. Comput. Math.}, 23(6):2251--2299, 2023.

\bibitem{Kato84}
T.~Kato.
\newblock {Strong $L^p$-solutions of the Navier-Stokes equation in
  $\mathbb{R}^m$, with applications to weak solutions}.
\newblock {\em Math. Z.}, 187(4):471--480, 1984.

\bibitem{Kwonslowdecay}
H.~Kwon and T.-P. Tsai.
\newblock {Global Navier–Stokes Flows for Non-decaying Initial Data with
  Slowly Decaying Oscillation}.
\newblock {\em Communications in Mathematical Physics}, 375:1665–1715, 2020.

\bibitem{LeiRen}
Z.~Lei and X.~Ren.
\newblock {Quantitative partial regularity of the Navier-Stokes equations and
  applications}.
\newblock {\em arXiv preprint arXiv:2210.01783}, 2022.

\bibitem{l2ulocsolns}
P.~G. Lemarié-Rieusset.
\newblock Solutions faibles d'énergie infinie pour les équations de
  navier—stokes dans {$\mathcal{R}^3$}.
\newblock {\em Comptes Rendus de l'Académie des Sciences - Series I -
  Mathematics}, 328(12):1133--1138, 1999.

\bibitem{Lemarie}
P.~G. Lemarié-Rieusset.
\newblock {\em Recent developments in the Navier-Stokes problem}, volume 431 of
  Chapman \& Hall/CRC Research Notes in Mathematics.
\newblock Chapman \& Hall/CRC, Boca Raton, FL, 2003.

\bibitem{Lermarie21st}
P.~G. Lemarié-Rieusset.
\newblock {\em The Navier-Stokes Problem in the 21st Century}.
\newblock CRC press, Boca Raton, FL, 2016.

\bibitem{PierreWeakStrong}
P.~G. Lemarié-Rieusset.
\newblock {A remark on weak-strong uniqueness for suitable weak solutions of
  the Navier–Stokes equations}.
\newblock {\em Revista Matemática Iberoamericana}, 38(7):2217–2248, 2022.

\bibitem{Leray}
J.~Leray.
\newblock {Sur le mouvement d’un liquide visqueux emplissant l’espace}.
\newblock {\em Acta Math}, 63:193--248, 1934.

\bibitem{peturbedCKN}
J.~Li, C.~Miao, and X.~Zheng.
\newblock {Minimal blow-up initial data in critical Fourier-Herz spaces for
  potential Navier-Stokes singularities}.
\newblock {\em arXiv preprint arXiv:1804.09842v1}, 2018.

\bibitem{asymptoticLp}
Y.~Li, J.~Zhang, and T.~Zhang.
\newblock {Asymptotic Stability of Landau Solutions to Navier–Stokes System
  Under $L^p$-Perturbations }.
\newblock {\em Journal of Mathematical Fluid Mechanics}, 25(5), 2023.

\bibitem{selfsimNeustupa}
J.~Neustupa.
\newblock {Partial Regularity of Weak Solutions to the Navier-Stokes Equations
  in the Class $ L^{\infty}(0,T;\, L^3(\Omega)^3) $}.
\newblock {\em Journal of Mathematical Fluid Mechanics}, 1:309–325, 1999.

\bibitem{PlanchonBesovmild}
F.~Planchon.
\newblock {Asymptotic behavior of global solutions to the Navier–Stokes
  equations in $\R^3$ }.
\newblock {\em Rev. Mat. Iberoam}, 14(1):71--93, 1998.

\bibitem{3DNSE}
J.~C. Robinson, J.~L. Rodrigo, and W.~Sadowski.
\newblock {\em The Three-Dimensional Navier-Stokes Equations}.
\newblock Cambridge University Press, 2016.

\bibitem{RobinsonBox}
J.C. Robinson and W.~Sadowski.
\newblock {On the Dimension of the Singular Set of Solutions to the
  Navier–Stokes Equations}.
\newblock {\em Communications in Mathematical Physics}, 309:497–506, 2012.

\bibitem{sereginlecturenotes}
G.~Seregin.
\newblock {\em {Lecture Notes on Regularity Theory for the Navier-Stokes
  equations}}.
\newblock Lecture Notes OxPDE-14/01
  {\url{https://www.maths.ox.ac.uk/system/files/attachments/Lecture%20Notes%2014.01.pdf}},
  2014.

\bibitem{sereginnote}
G.~Seregin.
\newblock {A note on weak solutions to the Navier-Stokes equations that are
  locally in $L_\infty(L^{3,\infty})$}.
\newblock {\em Algebra i Analiz}, 32(3):238–253, 2020.

\bibitem{notypeIaxis}
G.~Seregin.
\newblock {Local regularity of axisymmetric solutions to the Navier-Stokes
  equations}.
\newblock {\em Anal. Math. Phys}, 10(4), 2020.

\bibitem{largeL3data}
G.~Seregin and V.~Sverák.
\newblock {On global weak solutions to the Cauchy problem for the
  Navier–Stokes equations with large $L^3$-initial data}.
\newblock {\em Nonlinear Analysis: Theory, Methods \& Applications},
  154:269--296, 2017.
\newblock Calculus of Variations, in honor of Nicola Fusco on his 60th
  birthday.

\bibitem{Serrin62}
J.~Serrin.
\newblock {On the interior regularity of weak solutions of the Navier-Stokes
  equations}.
\newblock {\em Arch. Rational Mech. Anal.}, 9:187–195, 1962.

\bibitem{Serrin63}
J.~Serrin.
\newblock {The initial value problem for the Navier-Stokes equations. Nonlinear
  Problems (R. Langer ed.)}.
\newblock {\em Madison: The University of Wisconsin press}, pages 69--98, 1963.

\bibitem{Sohrbook}
H.~Sohr.
\newblock {\em The Navier-Stokes Equations: An Elementary Functional Analytic
  Approach}.
\newblock Boston; Basel ; Berlin: Birkhäuser, 2001.

\bibitem{SolonnikovRussian}
V.~A. Solonnikov.
\newblock {Estimates for solutions of a non-stationary linearized system of
  Navier–Stokes equations}.
\newblock {\em Trudy Mat. Inst. Steklov.}, 70:213–317, 1964.

\bibitem{selfsimTsai}
T.-P. Tsai.
\newblock {On Leray's self-similar solutions of the Navier-Stokes equations
  satisfying local energy estimates}.
\newblock {\em Arch. Rational Mech. Anal.}, 143(1):29–51, 1998.

\end{thebibliography}

\end{document}